\newtheorem{theorem}{Theorem}[section]
\newtheorem{lemma}[theorem]{Lemma}
\newtheorem{proposition}[theorem]{Proposition}
\newtheorem{corollary}[theorem]{Corollary}
\theoremstyle{definition}
\newtheorem{definition}[theorem]{Definition}
\theoremstyle{remark}
\newtheorem{remark}[theorem]{Remark}
\numberwithin{equation}{section}
\newcommand\norm[1]{\left\lVert#1\right\rVert_{\mathbb{TV}}}
\newcommand{\N}{\mathcal{N}}
\newcommand{\R}{\mathbb{R}}
\newcommand{\x}{{x^{\epsilon}_t}}
\newcommand{\y}{{y^{\epsilon}_t}}
\author{Barrera, G.}
\address{
Center for Research in Mathematics, CIMAT. Jalisco S/N, Valenciana, ZIP: $36240$ Guanajuato, Guanajuato, Mexico.}
\email{bvargas@cimat.mx }
\address{Instituto de Matem\'atica Pura e Aplicada, IMPA. Estrada Dona Castorina $110$, ZIP:$22460$-$320$,
Rio de Janeiro, Rio de Janeiro, Brasil.}
\email{bvargas@impa.br}
\author{Jara, M.}
\address{Instituto de Matem\'atica Pura e Aplicada, IMPA. Estrada Dona Castorina $110$, ZIP:$22460$-$320$,
Rio de Janeiro, Rio de Janeiro, Brasil.}
\email{monets@impa.br}
\thanks{Research supported by a grant from CNPq}
\title{{Abrupt Convergence for Stochastic Small Perturbations of One Dimensional Dynamical Systems}}
\keywords{Cut-off Phenomenon, Total Variation Distance, Brownian Motion, Perturbed Dynamical Systems, Stochastic Differential Equations.}
\date{\today}
\begin{document}
\maketitle

\begin{abstract}
We study the cut-off phenomenon for a family of stochastic small perturbations of a one dimensional dynamical system. We will  focus in a semiflow of a deterministic differential equation which is perturbed by adding to the dynamics a white noise of small variance. Under suitable hypothesis on the potential we will prove that the family of perturbed stochastic differential equations present a profile cut-off phenomenon with respect to the total variation distance. We also prove a local cut-off phenomenon in a neighborhood of the local minima (metastable states) of multi-well potential.
\end{abstract}
\markboth{Stochastic Small Perturbations of Dynamical Systems}{Stochastic Small Perturbations of Dynamical Systems}
\section{Introduction}

In the last decades intense research has been devoted to the study of dynamical systems subjected to random perturbations. Considerable effort has been dedicated to investigate exit times and exit locations from given domains and how they relate to the respective deterministic dynamical system. The theory of large deviations provides the usual mathematical framework for tackling these problems in case of Gaussian perturbations. This theory sets up the precise time scales for transitions of non degenerate stochastic systems between certain regimes. The theory of random dynamical systems, on the other hand, assigns Lyapunov exponents to linear random dynamical systems. These are the exponential growth rates as time grows large for fixed intensities of the underlying noise. For details see M. Freidling \& A. Wentzell \cite{FW}, \cite{FW1}, \cite{FW2}, M. Day \cite{MD}, \cite{MD1} and W. Siegert \cite{SI}. 
We will study the relation to the respective deterministic dynamical systems from a different point of view.

We study the asymptotically behavior or the so-called {\em cut-off phenomenon} for a family of stochastic small perturbations of a given dynamical system.
We will focus on the semiflow of a deterministic differential equation which is perturbed by adding to the dynamics a white noise perturbations.
Under suitable hypotheses on the vector field (coercivity assumption) we will prove that the
one parameter family of perturbed stochastic differential equations presents a profile cut-off in the sense of the definition of cut-off given by
J. Barrera \& B. Ycart \cite{BY}.

The term ``cut-off''   was introduced by D. Aldous and P. Diaconis in \cite{AD} in the early eighties to describe the phenomenon of abrupt convergence of Markov chains introduced as models of shuffling cards. Since the appearance of \cite{AD} many families of stochastic processes have been shown to have similiar properties. For a good introduction to the  different definitions of cut-off and the evolution of the concept in discrete time, see
J. Barrera \& B. Ycart
\cite{BY} and
P. Diaconis
\cite{DI}. In \cite{SA}, L. Saloff-Coste gives an extensive list of random walks for which the phenomenon occurs. Now, it us a well studied feature of Markov processes.

\textit{What does the ``cut-off'' phenomenon mean?}
It refers to an asymptotically drastic convergence of a family of stochastic processes labeled by some parameter.
Before a certain ``cut-off time'' those processes stay far from equilibrium in the sense that a suitable distance in some sense between the distribution at time $t$ and the equilibrium measure is far from $0$;
after a deterministic time  ``the cut-off time'' the distance decays exponentially fast to zero.

The term ``cut-off''  is naturally associated to such an ``all/nothing''  or ``1/0 behavior'', but it has the drawback of being used with other meanings in statistical mechanics and theoretical physics.
Alternative names have been proposed, including {\em threshold phenomenon} and {\em abrupt convergence}.

Consider a one parameter family of stochastic processes in continuous time $\{x^{\epsilon}\}_{\epsilon>0}$ indexed by $\epsilon>0$,
$x^{\epsilon}:=\{x^{\epsilon}_t\}_{t\geq 0}$, each one converging to a asymptotic distribution $\mu^{\epsilon}$ when $t$ goes to infinity. Let us denote by $d_{\epsilon}(t)$ the distance between the distribution at time $t$ of the $\epsilon$-th processes, $\mathbb{P}\left(x^\epsilon_t\in \cdot\right)$, and its asymptotic distribution as $t\rightarrow +\infty$, $\mu^\epsilon$, where the ``distance''  can be taken to being the total variation, separation, Hellinger, relative entropy, Wasserstein, $L^{p}$ distances, etc. Following J. Barrera \& B. Ycart \cite{BY}, the cut-off phenomenon for $\{x^\epsilon\}_{\epsilon >0}$ can be expressed at three increasingly sharp levels. Let us denoted by $M$ the diameter of the respective metric space of probability measures in which we are working. In general, $M$ could be infinite. In our case, we will focus on the total variation distance so $M=1$.
\begin{definition}[Cut-Off]\label{cutoff}
The family $\{x^{\epsilon}\}_{\epsilon>0}$ has a cut-off at $\{t_{\epsilon}\}_{\epsilon>0}$  if $t_{\epsilon} \rightarrow +\infty$ when $\epsilon \rightarrow 0$ and
\begin{eqnarray*}
\lim\limits_{\epsilon\rightarrow 0 }{d_{\epsilon}(ct_{\epsilon})}= \left\{ \begin{array}{lcc}
             M &   if  & 0 < c < 1, \\
             \\ 0 &  if & c>1. \\
             \end{array}
   \right.
\end{eqnarray*}
\end{definition}

\begin{definition}[Window Cut-Off]\label{windows}
The family $\{x^{\epsilon}\}_{\epsilon>0}$
 has a window cut-off at
$\{\left(t_{\epsilon}, w_{\epsilon}\right)\}_{\epsilon>0}$, if $t_{\epsilon} \rightarrow +\infty$ when $\epsilon \rightarrow 0$, $w_{\epsilon}=o\left(t_{\epsilon}\right)$ and
\begin{eqnarray*}
\lim\limits_{c \rightarrow -\infty}{\liminf\limits_{\epsilon\rightarrow 0}
{d_{\epsilon}(t_{\epsilon}+cw_{\epsilon})}}&=&M,\\
\lim\limits_{c \rightarrow +\infty}{\limsup\limits_{\epsilon\rightarrow 0}
{d_{\epsilon}(t_{\epsilon}+cw_{\epsilon})}}&=&0.\\
\end{eqnarray*}
\end{definition}

\begin{definition}[Profile Cut-Off]\label{profile}
The family $\{x^{\epsilon}\}_{\epsilon>0}$ has profile cut-off at
$\{\left(t_{\epsilon}, w_{\epsilon}\right)\}_{\epsilon>0}$ with profile $G$, if  $t_{\epsilon} \rightarrow +\infty$ when $\epsilon \rightarrow 0$, $w_{\epsilon}=o\left(t_{\epsilon}\right)$,
\begin{eqnarray*}
 G(c):=\lim\limits_{\epsilon \rightarrow 0}{d_{\epsilon}(t_{\epsilon}+cw_{\epsilon})}
\end{eqnarray*} exists for all $c\in \mathbb{R}$ and
\begin{eqnarray*}
\lim\limits_{c \rightarrow -\infty}{G(c)}&=&M,\\
\lim\limits_{c \rightarrow +\infty}{G(c)}&=&0.
\end{eqnarray*}
\end{definition}
We also give a mathematical description of the phenomenon of metastability when the potential is a double well potential with some smooth conditions and certain increase rate at infinity.
According to the initials conditions the deterministic trajectories associated to the differential equation (\ref{dde1}) converge to the local minima of the potential $V$ or stay in its local minima. 
Therefore, no transition between different domains of attraction is possible. This situation becomes different if we perturb the deterministic differential equation (\ref{dde1}) by a small aditive noise whose presence allows transitions between the potential wells. Depending on the initial conditions of the system and the properties of the noise certain potential wells may be reached only on appropriated long time scales or stay unvisited.

The phenomenon of metastability roughly speaking means that for different time scales and initial conditions the system may reach different local statistical equilibria.
Dynamical systems subject to small Gaussian perturbations have been studied extensively, for details see \cite{FW}.
The theory of large deviations allows to solve the exit problem from the domain of attraction of a stable point.
It turns out that the mean exit time is exponentially large in the small noise parameter, and its logarithmic rate is proportional to the height of the potential barrier the trajectories have to overcome.
Consequently, for a multi-well potential one can obtain a series of exponentially non-equivalent time scales given by the wells mean exit times. 
Moreover, one can prove that the normalised exit times are exponentially distributed and have a memoryless property. For details see \cite{FW} for Gaussian perturbations and \cite{II} for L\'evy-driven diffusions.

This material will be organized as follows. Section \ref{section1} describes the model and states the main results besides establishing the basic notation. Section \ref{section2} provides the results for a linear approximations which is an essential tool in order to obtain the main results. Section \ref{gradcase}
gives the ingredients in order to obtain the main results and provides the proof of the main results.
Section \ref{dwp} studies a kind of local cut-off phenomenon in a neighborhood of the local minima (metastable states) of multi-well potential.
The Appendix  is divided in three section as follows:
Section \ref{ap1} gives elementary properties for the total variation distances of Normal distributions. Section \ref{ap2} provides the proofs that we do not proof in Section \ref{section2} and Section 
\ref{gradcase} in order to the lecture be fluent.
Section \ref{ap3} gives some useful results that we use along of this material.

\section{Stochastic Perturbations: One Dimensional Case}\label{section1}
On this section, let $x_0 \in \mathbb{R} \setminus \{0\}$ be fixed and let us consider the semiflow $\{\psi_t\}_{t \geq 0}$ associated to the solution of the following deterministic differential equation,
\begin{eqnarray}\label{dde1}
dx_t&=&-V^{\prime}(x_t)dt
\end{eqnarray}
for $t \geq 0$. The hypothesis made in Theorem \ref{main} on the potential $V$ guarantees existence and uniqueness of solutions of \eqref{dde1}, as well as all the other (stochastic or deterministic) equations defined below.

Let us establish some basic notation.
Let us take $\mu\in \mathbb{R}$ and let  $\sigma^2\in ]0,+\infty[$ be  fixed numbers.
We denote by $\mathcal{N}{\left(\mu,\sigma^2\right)}$ the Normal distribution with mean $\mu$ and variance $\sigma^2$. Given two probability measures $\mathbb{P}$ and $\mathbb{Q}$ which are defined in the same measurable space $\left(\Omega,\mathcal{F}\right)$,
we denote the total variation distance between $\mathbb{P}$ and $\mathbb{Q}$ by
$$\norm{\mathbb{P}-\mathbb{Q}}:=\sup\limits_{A\in \mathcal{F}}{|\mu(A)-\nu(A)|}.$$ Along this paper we always consider $\epsilon>0$.

Our main Theorem in the one dimensional case is the following:

\begin{theorem}[General Case]\label{main}
Let $V:\mathbb{R}\rightarrow \mathbb{R}$ be an one dimensional potential
that satisfies the following:
\begin{itemize}
\item[$i)$] $V\in \mathcal{C}^3$.
\item[$ii)$] $V(0)=0$.
\item[$iii)$] $V^{\prime}(x)=0$ if only if $x=0$.
\item[$iv)$] There exists $\delta>0$ such that
$V^{\prime\prime}(x)\geq \delta$ for every $x\in \R$.
\end{itemize}
Let us consider the family of processes indexed by
$\epsilon > 0$, $x^{\epsilon}=\{ {x^{\epsilon}_t} \}_{t \geq 0}$
which are given by the the semiflow of the following stochastic differential equation,
\begin{eqnarray*}
d{x^{\epsilon}_t}&=&-V^{\prime}({x^{\epsilon}_t})dt+\sqrt{\epsilon}dW_t,\\
{x^{\epsilon}_0}&=&x_0
\end{eqnarray*}
for $t \geq 0$, where $x_0$ is a deterministic initial condition in $\mathbb{R}\setminus \{0\}$ and
$\{W_t\}_{t \geq 0}$ is a standard Brownian motion. This family
presents profile cut-off  in the sense of
the Definition \ref{profile}  with respect to the total variation distance when $\epsilon$ goes to zero.
The profile function $G:\mathbb{R}\rightarrow \mathbb{R}$ is given by
\begin{equation*}
G(b):=\norm{\N{\left(\tilde{c} e^{-b},1\right)}-\N{\left(0,1\right)}},
\end{equation*}
where $\tilde{c}$ is the nonzero constant given by
\begin{eqnarray*}
\lim\limits_{t\rightarrow +\infty}{e^{V^{\prime\prime}{(0)}t}\psi_t}&=:&\tilde{c}.
\end{eqnarray*}
The cut-off time $t_{\epsilon}$ and window time $w_{\epsilon}$ are given by
\begin{eqnarray*}
t_{\epsilon}&:=&\frac{1}{2V^{\prime\prime}(0)}\left[\ln\left(\frac{1}{\epsilon}\right)+\ln\left(2V^{\prime\prime}(0)\right) \right],\\
w_{\epsilon}&:=&\frac{1}{V^{\prime\prime}(0)}+\delta_\epsilon,
\end{eqnarray*}
where $\delta_\epsilon=\epsilon^{\gamma}$ for some $\gamma\in ]0,1[$.
\end{theorem}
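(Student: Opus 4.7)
The plan is to reduce the nonlinear problem to a Gaussian one by showing that, on the cut-off time scale, both the law of $\x$ and the invariant measure $\mu^\epsilon$ of the SDE are asymptotically close in total variation to explicit Gaussian measures. Once this reduction is in place, the profile function $G$ drops out of the appendix computations on the total variation distance between normal distributions.

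First I would control the deterministic flow. Hypothesis $(iv)$ gives a global exponential attraction to $0$, and writing $V'(x) = V''(0)x + r(x)$ with $r(x) = O(x^2)$ near $0$, a standard Gronwall argument on $u_t := e^{V''(0)t}\psi_t$ shows that $u_t$ has a finite nonzero limit $\tilde{c}$ and, more quantitatively,
\begin{equation*}
\psi_t = \tilde{c}\,e^{-V''(0)t} + O\bigl(e^{-2V''(0)t}\bigr), \qquad t\to +\infty.
\end{equation*}
Evaluating at $t=t_{\epsilon}+b w_{\epsilon}$ and using the explicit form of $t_\epsilon$ gives $\psi_{t_{\epsilon}+b w_{\epsilon}}=\tilde{c}\sqrt{\epsilon/(2V''(0))}\,e^{-b}(1+o(1))$, uniformly for $b$ in compact sets; the correction $\delta_\epsilon=\epsilon^{\gamma}$ in $w_\epsilon$ is exactly what is needed to absorb the $O(e^{-2V''(0)t_\epsilon}) = O(\epsilon)$ error in the expansion above.

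Next I would approximate $\x$ by a Gaussian proxy. Writing $\x = \psi_t + \sqrt{\epsilon}\,\eta^{\epsilon}_t$ and expanding $V'$ to first order around $\psi_t$, the remainder $\eta^{\epsilon}_t$ satisfies an SDE differing from the inhomogeneous Ornstein--Uhlenbeck equation $d\tilde{\eta}_t = -V''(\psi_t)\tilde{\eta}_t\,dt + dW_t$ only by a term of order $\sqrt{\epsilon}$ coming from the quadratic Taylor remainder of $V'$; here the $\mathcal{C}^3$ regularity of $V$ is essential. Invoking the linear approximation results announced in Section \ref{section2}, this yields that the law of $\x$ is within $o(1)$ in $\norm{\,\cdot\,}$ of the Gaussian $\N(\psi_t,\Sigma_\epsilon(t))$, with
\begin{equation*}
\Sigma_{\epsilon}(t) = \epsilon\int_{0}^{t} e^{-2\int_{s}^{t} V''(\psi_r)\,dr}\,ds \longrightarrow \frac{\epsilon}{2V''(0)}
\end{equation*}
uniformly for $t\geq t_\epsilon$. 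A parallel Laplace-type expansion of $\mu^{\epsilon}\propto e^{-2V/\epsilon}$ around its minimum at $0$, valid because $V''\geq \delta>0$ globally, shows that $\mu^{\epsilon}$ is also $o(1)$-close in total variation to $\N(0,\epsilon/(2V''(0)))$.

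Finally, I would apply the scale-invariance of the total variation distance between Gaussians from Appendix \ref{ap1}: rescaling by the standard deviation $\sqrt{\epsilon/(2V''(0))}$, the quantity $\norm{\N(\psi_{t_\epsilon+b w_\epsilon},\Sigma_\epsilon(t_\epsilon+b w_\epsilon))-\N(0,\epsilon/(2V''(0)))}$ converges to $\norm{\N(\tilde{c}e^{-b},1)-\N(0,1)}=G(b)$ as $\epsilon\to 0$; combined with the triangle inequality and Steps 2--3, this gives the existence of the profile. The boundary behavior is then immediate: as $b\to-\infty$ the mean $\tilde{c}e^{-b}$ diverges so $G(b)\to 1 = M$, and as $b\to+\infty$ the two normals coincide so $G(b)\to 0$. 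The main obstacle will be Step 2: promoting the pathwise/$L^2$ closeness of $\x$ to its Gaussian proxy into a total-variation statement, since $\norm{\,\cdot\,}$ is a strong distance and the time horizon $t_\epsilon \asymp \log(1/\epsilon)$ forces one to track the exponential constants in the Gronwall bounds very carefully; this is where the sharper results of Section \ref{section2}, together with the $\mathcal{C}^3$ hypothesis on $V$, will play the decisive role.
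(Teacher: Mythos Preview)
Your overall architecture --- reduce to Gaussians via a first-order expansion around $\psi_t$, approximate $\mu^\epsilon$ by $\mathcal{N}(0,\epsilon/(2V''(0)))$ via Laplace, then invoke scale invariance --- matches the paper's. But you have correctly located, and then not resolved, the genuine obstacle. Pathwise or $L^2$ closeness of $x^\epsilon_t$ to its Gaussian proxy $z^\epsilon_t=\psi_t+\sqrt{\epsilon}\,y_t$ simply does not imply closeness in total variation, and nothing in Section~\ref{section2} addresses this: those results concern only the linearized process itself. The paper's actual mechanism is a two-step short-time argument on the interval $[t_\epsilon(b),\,t_\epsilon(b)+b\delta_\epsilon]$. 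First (Proposition~\ref{shorttime}) one compares the \emph{laws} of the nonlinear and linearized evolutions over that short window via Girsanov and Pinsker; the relative-entropy bound involves $\int (\gamma^\epsilon_s-\Gamma^\epsilon_s)^2\,ds$ over an interval of length $\delta_\epsilon$, which is small. Second (Proposition~\ref{lincou}) one compares two linearized evolutions started from $x^\epsilon_{t_\epsilon(b)}$ and $z^\epsilon_{t_\epsilon(b)}$: conditionally these are Gaussians with the same variance and means differing by $|x^\epsilon_{t_\epsilon(b)}-z^\epsilon_{t_\epsilon(b)}|$, so Lemma~\ref{conditional} turns the pathwise first-order estimate of Proposition~\ref{order} into a TV bound. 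Your sketch contains neither ingredient.

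This also means your explanation of $\delta_\epsilon$ is wrong. The $\epsilon^\gamma$ is not there to absorb the $O(e^{-2V''(0)t_\epsilon})$ remainder in the expansion of $\psi_t$; that remainder is $O(\epsilon)$ and is already negligible after rescaling. The role of $\delta_\epsilon$ is to provide a short additional time during which fresh Brownian noise is injected, so that the variance $\epsilon\int_0^{b\delta_\epsilon}\Phi_s^{-2}\,ds \asymp \epsilon\,\delta_\epsilon$ is large enough to dominate the mean discrepancy $\mathbb{E}|x^\epsilon_{t_\epsilon(b)}-z^\epsilon_{t_\epsilon(b)}| = O(\epsilon\,\mathrm{polylog}(1/\epsilon))$ in the bound of Lemma~\ref{conditional}, while $\delta_\epsilon$ is short enough for the Girsanov estimate to vanish. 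The window $\gamma\in(0,1)$ reflects exactly this trade-off. Finally, note that Proposition~\ref{order} and the Girsanov bounds require $\|V''\|_\infty,\|V'''\|_\infty<\infty$, which your hypotheses do not give; the paper handles this by a separate truncation step (Propositions~\ref{baprox}--\ref{rb}) that your outline omits.
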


This Theorem will be proved at the end of the section \ref{gradcase}.

\section{The Linearized Case}\label{section2}
As an important intermediate step, we prove profile cut-off for a family of processes satisfying a linear, non-homogeneous stochastic differential equation which we will define bellow.
This result holds for a more general class of potentials that Theorem \ref{main}, which we define as follows.
\begin{definition}
We say that $V$ is a regular potential if
$V:\mathbb{R}\rightarrow \mathbb{R}$ satisfies
\begin{itemize}
\item[$a)$] $V$ is $\mathcal{C}^3$.
\item[$b)$] $V(0)=0$.
\item[$c)$] $V^{\prime}(x)=0$ iff $x=0$.
\item[$d)$] $V^{\prime\prime}(0)>0$.
\item[$e)$] $\lim\limits_{|x|\rightarrow +\infty}{V(x)}=+\infty$.
\end{itemize}
\end{definition}

In order to prove Theorem \ref{main} we will prove the analogous result for a ``linear approximations'' of the potential $V$.

\begin{theorem}[The Linearized Case]\label{toy}
Let us consider the family of processes indexed by
$\epsilon > 0$, $y^{\epsilon}=\{ {y^{\epsilon}_t} \}_{t \geq 0}$
which are given by the solution of the following linear stochastic differential equation,
\begin{equation}
\label{linearapprox}
\begin{array}{rcl}
d{y^{\epsilon}_t}&=&-V^{\prime\prime}{(\psi_t)}{y^{\epsilon}_t}dt+\sqrt{\epsilon}dW_t,\\
{y^{\epsilon}_0}&=&y_0
\end{array}
\end{equation}for $t \geq 0$, where $y_0$ is a deterministic initial condition in $\R\setminus\{0\}$,
$\{W_t\}_{t \geq 0}$ is a standard Brownian motion, $V$ is a regular potential and $\{\psi_t\}_{t\geq 0}$ is given by the solution of the deterministic differential equation (\ref{dde1}).
This family
presents profile cut-off  in the sense of
the Definition \ref{profile}
 with respect to the total variation distance when $\epsilon$ goes to zero.
The profile function $G:\mathbb{R}\rightarrow \mathbb{R}$ is given by
\begin{eqnarray*}
G(b)&:=&\norm{\N{\left(c e^{-b},1\right)}-\N{\left(0,1\right)}},
\end{eqnarray*}
where $c$ is the nonzero constant given by
\begin{eqnarray*}
\lim\limits_{t\rightarrow +\infty}{e^{V^{\prime\prime}{(0)}t}\Phi_t}&=:&c,
\end{eqnarray*}
where $\Phi=\{\Phi_{t}\}_{t\geq 0}$ is the fundamental solution of the non autonomous system
\begin{eqnarray*}
d\Phi_{t}=-V^{\prime\prime}{(\psi_t)}\Phi_t dt
\end{eqnarray*}
for every $t\geq 0$ with initial condition $\Phi_{0}=1$.
The cut-off time $t_{\epsilon}$ and window time $w_{\epsilon}$ are given by
\begin{eqnarray*}
t_{\epsilon}&:=&\frac{1}{2V^{\prime\prime}(0)}\left[\ln\left(\frac{1}{\epsilon}\right)+\ln\left(2V^{\prime\prime}(0) y^2_0\right) \right],\\
w_{\epsilon}&:=&\frac{1}{V^{\prime\prime}(0)}.
\end{eqnarray*}
\end{theorem}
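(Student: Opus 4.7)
The plan is to exploit the linearity of \eqref{linearapprox}: since the drift is linear in $y^\epsilon_t$ and the noise is additive, variation of constants yields the explicit representation
$$y^\epsilon_t = \Phi_t y_0 + \sqrt{\epsilon}\,\Phi_t \int_0^t \Phi_s^{-1}\, dW_s,$$
so $\mathrm{Law}(y^\epsilon_t) = \N(m^\epsilon_t, (\sigma^\epsilon_t)^2)$ with $m^\epsilon_t := \Phi_t y_0$ and $(\sigma^\epsilon_t)^2 := \epsilon\, \Phi_t^2 \int_0^t \Phi_s^{-2}\, ds$. From here the entire problem reduces to a computation with Gaussian laws, and the total variation formulas collected in Section \ref{ap1} apply directly.

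The next step is to identify the limiting measure $\mu^\epsilon$ together with a sharp asymptotic for $\Phi_t$. The regularity hypotheses force $\psi_t \to 0$ (since $V$ is coercive and $0$ is the unique critical point), and strict convexity at $0$ upgrades this to exponential convergence; writing $\Phi_t = \exp\!\bigl(-\int_0^t V''(\psi_s)\,ds\bigr)$, integrability of $V''(\psi_s) - V''(0)$ then yields $\Phi_t \sim c\, e^{-V''(0) t}$ with $c > 0$. From this one obtains $m^\epsilon_t \to 0$ and, by L'H\^opital, $(\sigma^\epsilon_t)^2 \to \epsilon/(2 V''(0))$, identifying $\mu^\epsilon = \N(0, \epsilon/(2V''(0)))$. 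Establishing the asymptotic for $\Phi_t$ is the main technical obstacle: since regularity does not furnish a uniform convexity bound on $V''$, one must split the flow into an initial phase (where coercivity of $V$ drives $\psi_t$ into a neighborhood of $0$ in finite time) and a final phase (where local convexity produces the exponential rate), and then show that the resulting decay of $V''(\psi_s)-V''(0)$ is fast enough to be integrable on $[0,\infty)$.

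For the cut-off computation, I would rescale by $\sqrt{2V''(0)/\epsilon}$, a bijection of $\R$ that preserves total variation. This sends $\mu^\epsilon$ to $\N(0,1)$ and $\mathrm{Law}(y^\epsilon_t)$ to $\N(M^\epsilon_t, (\Sigma^\epsilon_t)^2)$ with $M^\epsilon_t := m^\epsilon_t \sqrt{2V''(0)/\epsilon}$ and $(\Sigma^\epsilon_t)^2 := 2V''(0)(\sigma^\epsilon_t)^2/\epsilon$. Substituting $t = t_\epsilon + b w_\epsilon$ and using the identities $e^{-V''(0)t_\epsilon} = \sqrt{\epsilon/(2V''(0)y_0^2)}$ and $e^{-V''(0) w_\epsilon} = e^{-1}$ together with $\Phi_t \sim c e^{-V''(0)t}$, a short algebraic computation gives $M^\epsilon_t \to c\,\mathrm{sgn}(y_0)\,e^{-b}$ and $(\Sigma^\epsilon_t)^2 \to 1$. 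Continuity of the Gaussian total variation distance in its parameters (Section \ref{ap1}), together with the invariance of $\norm{\N(\mu,1)-\N(0,1)}$ under $\mu \mapsto -\mu$, then yields $d_\epsilon(t_\epsilon + bw_\epsilon) \to \norm{\N(ce^{-b},1)-\N(0,1)} = G(b)$. The window properties $G(b) \to 1$ as $b \to -\infty$, $G(b) \to 0$ as $b \to +\infty$, and $w_\epsilon = o(t_\epsilon)$ are all immediate from standard Gaussian facts and the explicit forms of $t_\epsilon, w_\epsilon$.
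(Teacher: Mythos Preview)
Your proposal is correct and follows essentially the same strategy as the paper: explicit Gaussian representation via variation of constants, identification of the limit law $\N(0,\epsilon/(2V''(0)))$, rescaling by $\sqrt{2V''(0)/\epsilon}$, and the asymptotic $\Phi_t \sim c\,e^{-V''(0)t}$ combined with continuity of the Gaussian total variation distance in its parameters. The one place where you diverge slightly is in establishing $e^{V''(0)t}\Phi_t \to c \neq 0$: the paper (Lemma~\ref{importante0}) builds an explicit linearizing change of variables $h(\psi_t)$ satisfying a constant-coefficient ODE, whereas you argue by first proving exponential decay of $\psi_t$ near $0$ and then inferring integrability of $V''(\psi_s)-V''(0)$ from $V\in\mathcal C^3$; both routes are valid, and yours is arguably more direct.
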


Notice that choosing $V(x) = \frac{\alpha x^2}{2}$ for every $x\in \mathbb{R}$, where $\alpha>0$ is a fixed constant, we see that the Ornstein-Uhlenbeck process presents profile cut-off. 

In order to prove Theorem \ref{toy}, we will find the qualitative behavior of the semiflow $\psi=\{\psi_t\}_{t\geq 0}$ at infinity.
The following lemma tells us the asymptotic behavior of the expectation and variance of the ``linear approximations''.

\begin{lemma}\label{asin}
Leu us assume the hypothesis of Theorem \ref{toy}.
Then 
\begin{itemize}
\item[$i)$]  $\lim\limits_{t\rightarrow +\infty}{\psi_t}=0$.
\item[$ii)$] $\lim\limits_{t\rightarrow +\infty}{\Phi_t}=0$.
\end{itemize}
\begin{itemize}
\item[$iii)$] There exist constants $c\not=0$ and $\tilde{c}\not=0$ such that
\begin{eqnarray*}
\lim\limits_{t\rightarrow +\infty}{e^{V^{\prime\prime}{(0)}t}\Phi_t}&=&c,
\end{eqnarray*}
\begin{eqnarray*}
\lim\limits_{t\rightarrow +\infty}{e^{V^{\prime\prime}{(0)}t}\psi_t}&=&\tilde{c},
\end{eqnarray*}
where $\Phi=\{\Phi_{t}\}_{t\geq 0}$ is the fundamental solution of the non autonomous system
\begin{eqnarray*}
d\Phi_{t}=-V^{\prime\prime}{(\psi_t)}\Phi_t dt
\end{eqnarray*}
for every $t\geq 0$ with initial condition $\Phi_{0}=1$.
\item[$iv)$]
\begin{eqnarray*}
\lim\limits_{t\rightarrow +\infty}{\Phi_t^2 \int\limits_{0}^{t}{\left(\frac{1}{\Phi_s}\right)^2d{{s}}}}=\frac{1}{2V^{\prime\prime}{(0)}}.
\end{eqnarray*}
\end{itemize}
\end{lemma}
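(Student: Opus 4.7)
The plan is to prove the four items in the order $(i) \to (iii) \to (ii) \to (iv)$, establishing the convergence of $\psi_t$ first, then bootstrapping to the exponential rate, and finally obtaining all the statements about $\Phi_t$ as consequences.

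For $(i)$, I would use the hypotheses on the regular potential. Since $V(0)=0$, $V''(0)>0$, $V'(x)=0$ only at $x=0$, and $V$ is coercive, the function $V$ attains a unique global minimum at $0$. The energy identity $\tfrac{d}{dt}V(\psi_t)=-(V'(\psi_t))^2\le 0$ shows that $V(\psi_t)$ decreases and that $\{\psi_t\}$ stays in the compact sublevel set $\{V\le V(x_0)\}$; a standard LaSalle/$\omega$-limit argument then forces $\psi_t\to 0$. By the uniqueness of the ODE solutions (with $\psi\equiv 0$ a solution), the trajectory cannot cross $0$, so I may assume without loss of generality that $\psi_t$ is, say, strictly positive and monotonically decreasing to $0$.

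For the $\psi$-part of $(iii)$, my strategy is first to obtain exponential decay of $\psi_t$ at some rate close to $V''(0)$, and then upgrade it. From $V'(x)/x\to V''(0)$ as $x\to 0$, for every small $\eta>0$ there exists $T$ such that $V''(0)-\eta\le V'(\psi_s)/\psi_s\le V''(0)+\eta$ for all $s\ge T$. Integrating $\tfrac{d}{dt}\ln\psi_t = -V'(\psi_t)/\psi_t$ then gives $\psi_t\le C e^{-(V''(0)-\eta)t}$, which in particular is integrable on $[0,\infty)$. Next I would note that $V\in\mathcal{C}^3$ implies $|V'(x)/x - V''(0)|\le C|x|$ for $|x|$ small, hence
\begin{equation*}
\ln\psi_t + V''(0)t - \ln\psi_0 \;=\; \int_0^t\!\bigl(V''(0)-V'(\psi_s)/\psi_s\bigr)\,ds
\end{equation*}
is a Cauchy integral as $t\to\infty$; it converges to a finite limit, which shows $e^{V''(0)t}\psi_t\to\tilde c$ with $\tilde c\ne 0$.

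For $(ii)$ and the $\Phi$-part of $(iii)$, I would use $\Phi_t=\exp\!\bigl(-\int_0^t V''(\psi_s)\,ds\bigr)$. Item $(i)$ and $V''(0)>0$ give $V''(\psi_s)\ge V''(0)/2$ for $s$ large, so the integral diverges and $\Phi_t\to 0$. For the refined rate, the identity
\begin{equation*}
\ln\!\bigl(e^{V''(0)t}\Phi_t\bigr) \;=\; \int_0^t\!\bigl(V''(0)-V''(\psi_s)\bigr)\,ds
\end{equation*}
together with $|V''(0)-V''(\psi_s)|\le C|\psi_s|$ (from $V\in\mathcal{C}^3$) and the integrability of $|\psi_s|$ established above, shows that $\ln(e^{V''(0)t}\Phi_t)$ converges to a finite limit, hence $e^{V''(0)t}\Phi_t\to c\ne 0$.

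Finally for $(iv)$, I would write $\Phi_t^2\int_0^t \Phi_s^{-2}\,ds = \dfrac{\int_0^t \Phi_s^{-2}\,ds}{\Phi_t^{-2}}$. Since the denominator tends to $+\infty$ by $(ii)$, L'H\^opital's rule applies and yields
\begin{equation*}
\lim_{t\to\infty}\Phi_t^2\!\int_0^t\!\Phi_s^{-2}ds \;=\; \lim_{t\to\infty}\frac{\Phi_t^{-2}}{-2\Phi_t^{-3}\Phi_t'} \;=\; \lim_{t\to\infty}\frac{-\Phi_t}{2\Phi_t'} \;=\; \lim_{t\to\infty}\frac{1}{2V''(\psi_t)} \;=\; \frac{1}{2V''(0)},
\end{equation*}
using $\Phi_t'=-V''(\psi_t)\Phi_t$ in the second-to-last step and the continuity of $V''$ together with $(i)$ in the last step. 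The main obstacle, in my view, is the slightly circular flavor of $(iii)$: one needs exponential decay of $\psi_t$ to show that $e^{V''(0)t}\psi_t$ has a finite nonzero limit, so the argument must be staged — first a rough exponential bound from $V'(x)/x\to V''(0)$, then the sharp limit via the integrable remainder $|V'(\psi_s)/\psi_s - V''(0)|=O(|\psi_s|)$; once this is done, the rest of the lemma follows mechanically.
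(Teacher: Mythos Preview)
Your argument is correct, but it follows a different route from the paper's in two places worth noting. For $(ii)$ and $(iii)$ the paper exploits the closed-form identity $\Phi_t = V'(\psi_t)/V'(\psi_0)$ (which satisfies the same linear ODE with the same initial value), and then handles $(iii)$ by a single change of variables: defining $H(z)=V''(0)/V'(z)-1/z$ (extended continuously to $0$ via $V\in\mathcal C^3$) and $h(x)=x\exp\bigl(\int_0^x H\bigr)$, one checks that $h(\psi_t)$ solves the \emph{constant-coefficient} equation $d\Psi_t=-V''(0)\Psi_t\,dt$, so $\psi_t e^{V''(0)t}=h(\psi_0)\exp\bigl(-\int_0^{\psi_t}H\bigr)\to h(\psi_0)\ne 0$ directly, with no preliminary rough exponential bound needed. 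Your two-stage bootstrap (rough rate $\Rightarrow$ integrability of $|\psi_s|$ $\Rightarrow$ convergence of the log) is equally valid and perhaps more transparent, at the cost of an extra step; it also generalizes more readily when no such linearizing $h$ is available. For $(iv)$ the paper again uses the identity $\Phi_t=V'(\psi_t)/V'(\psi_0)$ together with an $\varepsilon$-sandwich from $(iii)$, whereas your L'H\^opital argument is cleaner and avoids invoking $(iii)$ altogether. One small point: when you bound $|V''(0)-V''(\psi_s)|\le C|\psi_s|$ you are using that $V'''$ is bounded along the trajectory, which is fine since $\psi_s$ stays in the compact sublevel set $\{V\le V(x_0)\}$---you might make that explicit.
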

For the proof of this lemma, see Lemma \ref{importante0}.
\begin{remark}
For the items $i)$ and $ii)$ in the Lemma \ref{asin}, we do not need the assumption that $V\in \mathcal{C}^3$; we need less regularity, $V\in \mathcal{C}^2$ is enough.
\end{remark}

The following lemma characterizes the distribution of the ``linear approximations".

\begin{lemma}\label{gauus}
Under the hypothesis of Theorem \ref{toy}, we have
\begin{eqnarray}\label{gaussfor}
\y=\Phi_{t}y_0+\sqrt{\epsilon} \Phi_{t}\int\limits_{0}^{t}{\frac{1}{\Phi(s)}d{W_{s}}}
\end{eqnarray}
for every $t\geq 0$,
where $\Phi=\{\Phi_{t}\}_{t\geq 0}$ is the fundamental solution of the non autonomous system
\begin{eqnarray*}
d\Phi_{t}=-V^{\prime\prime}{(\psi_t)}\Phi_t dt
\end{eqnarray*}
for every $t\geq 0$ with initial condition $\Phi_{0}=1$.
\end{lemma}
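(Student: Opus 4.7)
\begin{proaf}[Proof Proposal for Lemma \ref{gauus}]
The plan is to reduce the linear SDE (\ref{linearapprox}) to a pure Itô integral by the classical variation of parameters trick, using $\Phi_t$ as the integrating factor. First I would observe that, since $V\in \mathcal{C}^{3}$ and $\psi$ is a classical solution of (\ref{dde1}), the map $t\mapsto V^{\prime\prime}(\psi_t)$ is continuous, so the non autonomous linear ODE $d\Phi_t = -V^{\prime\prime}(\psi_t)\Phi_t \, dt$ with $\Phi_0=1$ is solved explicitly by
\begin{equation*}
\Phi_t = \exp\!\left(-\int_{0}^{t} V^{\prime\prime}(\psi_s)\, ds\right),
\end{equation*}
which is strictly positive for every $t\geq 0$. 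In particular $1/\Phi_t$ is well defined, continuous, and of bounded variation on every compact time interval.

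Next I would introduce the auxiliary process $Z_t := y^{\epsilon}_t / \Phi_t$ and compute $dZ_t$ via Itô's formula (or equivalently the deterministic integration by parts formula, since $\Phi_t$ is absolutely continuous in $t$ with no martingale part and the quadratic covariation between $y^{\epsilon}$ and $1/\Phi$ vanishes). Using $d(1/\Phi_t) = V^{\prime\prime}(\psi_t)/\Phi_t \, dt$, I obtain
\begin{equation*}
dZ_t = \frac{1}{\Phi_t} dy^{\epsilon}_t + y^{\epsilon}_t \, d\!\left(\frac{1}{\Phi_t}\right) = \frac{1}{\Phi_t}\!\left[-V^{\prime\prime}(\psi_t) y^{\epsilon}_t \, dt + \sqrt{\epsilon} \, dW_t\right] + \frac{V^{\prime\prime}(\psi_t)}{\Phi_t} y^{\epsilon}_t \, dt = \frac{\sqrt{\epsilon}}{\Phi_t} dW_t,
\end{equation*}
so the drift cancels exactly. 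Integrating from $0$ to $t$ and using $Z_0 = y_0$ gives
\begin{equation*}
Z_t = y_0 + \sqrt{\epsilon}\int_{0}^{t}\frac{1}{\Phi_s}dW_s.
\end{equation*}
Multiplying both sides by $\Phi_t$ yields the claimed formula (\ref{gaussfor}).

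Finally I would check that this is in fact \emph{the} solution. Since $V^{\prime\prime}$ is continuous and $\psi$ is deterministic, the drift coefficient $(t,y)\mapsto -V^{\prime\prime}(\psi_t)y$ is linear in $y$ with locally bounded, continuous coefficient in $t$; standard existence and uniqueness for linear SDEs with bounded drift coefficient on compact time intervals applies, so the process defined by (\ref{gaussfor}) is the unique strong solution of (\ref{linearapprox}). The integrability of $1/\Phi_s$ on $[0,t]$ needed to make the Itô integral well defined follows from continuity of $\Phi$ and the fact that $\Phi_s>0$ on $[0,t]$. The only technical point to be careful about is the integration by parts step, where one must justify that $1/\Phi_\cdot$ has no martingale part so that no Itô correction arises; this is immediate because $\Phi$ is deterministic and $\mathcal{C}^{1}$ in $t$.
\end{proaf}
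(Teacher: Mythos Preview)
Your proof is correct and follows exactly the approach the paper has in mind: the paper's own proof consists of the single sentence ``It follows from It\^o's formula. For details check \cite{KS}.'' Your variation-of-parameters computation via $Z_t = y^{\epsilon}_t/\Phi_t$ is precisely the standard derivation that this reference is pointing to, so you have simply unpacked what the paper leaves implicit.
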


\begin{proof}
It follows from It\^o's formula. For details check \cite{KS}.
\end{proof}
Using the decomposition (\ref{gaussfor}) of the process $y^{\epsilon}$ into a deterministic part and a mean-zero martingale and using It\^o's isometry for Wiener's integral, we obtain
\begin{eqnarray*}
\mathbb{E}\left[\y\right]&=& \Phi_t y_0,\\
\mathbb{V}\left[\y\right]&=&\epsilon \Phi_t^2 \int\limits_{0}^{t}{\left(\frac{1}{\Phi_s}\right)^2d{{s}}}.
\end{eqnarray*}

By Lemma \ref{gauus}, we have that for each $\epsilon>0$ and $t>0$ fixed, $\y$ is a random variable with Normal distribution
with mean
\begin{eqnarray*}
\nu^{\epsilon}_{t}&:=&\Phi_t y_0
\end{eqnarray*}
and variance
\begin{eqnarray*}
\eta^{\epsilon}_{t}&:=&\epsilon \Phi_t^2 \int\limits_{0}^{t}{\left(\frac{1}{\Phi_s}\right)^2d{{s}}}.
\end{eqnarray*}

\begin{corollary} Let us assume the hypothesis of Theorem \ref{toy} and  let $\epsilon>0$ be fixed. Then the random variable $\y$ converges in distribution as $t \to \infty$  to a Gaussian random variable $\N^{\epsilon}$ with mean zero
and variance $\frac{\epsilon}{2V^{\prime\prime}{(0)}}$.
\end{corollary}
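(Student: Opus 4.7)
The proof is essentially a direct combination of Lemma \ref{gauus} with parts (ii) and (iv) of Lemma \ref{asin}. The plan is the following.

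First I would invoke Lemma \ref{gauus}, which together with It\^o's isometry tells us that for each fixed $\epsilon>0$ and $t\geq 0$ the random variable $y^{\epsilon}_t$ is Gaussian with mean $\nu^{\epsilon}_t=\Phi_t y_0$ and variance $\eta^{\epsilon}_t=\epsilon\,\Phi_t^2\int_0^t (1/\Phi_s)^2\,ds$. So the problem reduces to computing the limits of these two deterministic sequences as $t\to+\infty$.

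Next I would read off the limits from Lemma \ref{asin}. Part (ii) gives $\Phi_t\to 0$, hence $\nu^{\epsilon}_t=\Phi_t y_0\to 0$. Part (iv) gives
\begin{equation*}
\lim_{t\to+\infty}\Phi_t^2\int_0^t\left(\frac{1}{\Phi_s}\right)^2 ds=\frac{1}{2V''(0)},
\end{equation*}
so $\eta^{\epsilon}_t\to\frac{\epsilon}{2V''(0)}$.

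Finally, to promote this convergence of the parameters into convergence in distribution of the Gaussians themselves, I would use the characteristic functions: for every $\xi\in\mathbb{R}$,
\begin{equation*}
\mathbb{E}\!\left[e^{i\xi y^{\epsilon}_t}\right]=\exp\!\left(i\xi\nu^{\epsilon}_t-\tfrac{1}{2}\xi^2\eta^{\epsilon}_t\right)\longrightarrow \exp\!\left(-\tfrac{\xi^2}{2}\cdot\frac{\epsilon}{2V''(0)}\right),
\end{equation*}
which is the characteristic function of $\mathcal{N}(0,\epsilon/(2V''(0)))$. L\'evy's continuity theorem then yields the claimed convergence in distribution. There is no real obstacle here; the content of the corollary is packaged entirely inside the two asymptotic statements in Lemma \ref{asin}, and the only mild step is verifying that Gaussian laws with converging parameters converge weakly to the limiting Gaussian, which is a standard one-line application of continuity of characteristic functions.
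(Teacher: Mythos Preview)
Your proof is correct and matches the paper's approach exactly: the paper's proof is the single line ``It follows from the item $ii)$ and item $iv)$ of Lemma \ref{asin},'' relying on the Gaussian representation from Lemma \ref{gauus} already displayed just before the corollary. Your addition of the characteristic-function step simply makes explicit the routine fact (left implicit in the paper) that Gaussians with converging means and variances converge in distribution.
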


\begin{proof}
It follows from the item $ii)$ and item $iv)$ of Lemma \ref{asin}.
\end{proof}

Now, we have all the tools in order to prove Theorem \ref{toy}.

\begin{proof}[Proof of Theorem \ref{toy}]
For each $\epsilon>0$ and $t>0$, we define
\begin{eqnarray*}
d^{\epsilon}{(t)}&=&\norm{\N\left(\nu^{\epsilon}_{t},\eta^{\epsilon}_{t}\right)-\N{\left(0,\frac{\epsilon}{2V^{\prime\prime}(0)}\right)}}
\end{eqnarray*}
and
\begin{eqnarray*}
D^{\epsilon}{(t)}&=&
\norm{\N{\left(\sqrt{\frac{2V^{\prime\prime}{(0)}}{\epsilon}}y_0 \Phi_t,1\right)}-\N{\left(0,1\right)}}.
\end{eqnarray*}
Using  triangle's inequality and Lemma \ref{lemma1}, for each $\epsilon>0$ and $t>0$ we obtain
\begin{eqnarray*}
d^{\epsilon}{(t)}&\leq & D^{\epsilon}{(t)}+
\norm{\N{\left(0,{2V^{\prime\prime}(0)}{\Phi_t}^2 I_t\right)}-\N{\left(0,1\right)}}
\end{eqnarray*}
and
\begin{eqnarray*}
D^{\epsilon}{(t)}&\leq & d^{\epsilon}{(t)}+
\norm{\N{\left(0,{2V^{\prime\prime}(0)}{\Phi_t}^2 I_t\right)}-\N{\left(0,1\right)}},
\end{eqnarray*}
where $I_t=\int\limits_{0}^{t}{\left(\frac{1}{\Phi_s}\right)^2d{{s}}}$. Therefore,
\begin{eqnarray*}
|d^{\epsilon}{(t)}-D^{\epsilon}{(t)}| &\leq &
\norm{\N{\left(0,{2V^{\prime\prime}(0)}{\Phi_t}^2 I_t\right)}-\N{\left(0,1\right)}}.
\end{eqnarray*}
For each $\epsilon>0$ let us define
\begin{eqnarray*}
t_{\epsilon}:=\frac{1}{2V^{\prime\prime}(0)}\left(\ln\left(\frac{1}{\epsilon}\right)+b_0\right)
\end{eqnarray*} and
\begin{eqnarray*}
w_{\epsilon}:=\frac{1}{V^{\prime\prime}(0)}
\end{eqnarray*}
with
$b_0:=\ln\left(2V^{\prime\prime}(0)y^2_0\right)$.
For every $b\in \R$, we define $t_{\epsilon}(b)=t_{\epsilon}+bw_{\epsilon}$. We  take $\epsilon_b>0$ such that $t_{\epsilon}(b)>0$ for every $0<\epsilon<\epsilon_b$.
 Using Lemma \ref{adicionalvar}, we obtain
\begin{eqnarray*}
\lim\limits_{\epsilon\rightarrow 0}|d^{\epsilon}{(t_{\epsilon}(b))}-D^{\epsilon}{(t_{\epsilon}(b))}|&=&0
\end{eqnarray*}
for every $b\in \R$. Let us consider the function $G:\R\rightarrow [0,1]$ defined by
\begin{eqnarray*}
G{(b)}:=\norm{\N{(ce^{-b},1)}-\N{(0,1)}},
\end{eqnarray*}
where $c\not=0$ is the constant of item $iii)$ in Lemma \ref{asin}.
Observe that
\begin{eqnarray*}
D^{\epsilon}{(t_{\epsilon}(b))}=
\norm{\N{\left(e^{V^{\prime\prime}(0) t_{\epsilon}(b)}\Phi_{t_{\epsilon}(b)}e^{-b},1\right)}-\N{(0,1)}}
\end{eqnarray*}
for every $b\in \R$ and $0<\epsilon<\epsilon_b$. Therefore, by item $iii)$ of Lemma \ref{asin} and by Lemma \ref{adicional},
we have
\begin{eqnarray*}
\lim\limits_{\epsilon\rightarrow 0}D^{\epsilon}{(t_{\epsilon}(b))}=G(b)
\end{eqnarray*}
for every $b\in \R$. By Lemma \ref{lemma2}, we have
$\lim\limits_{b\rightarrow +\infty}{G(b)}=0$ and $\lim\limits_{b\rightarrow -\infty}{G(b)}=1$.
Consequently, the theorem is proved.
\end{proof}

\begin{corollary}[The First Order Approximations]\label{first}
Let us consider the process $y=\{ {y_t} \}_{t \geq 0}$
which is given by the solution of the following linear stochastic differential equation,
\begin{eqnarray*}
d{y_t}&=&-V^{\prime\prime}{(\psi_t)}{y^{}_t}dt+dW_t,\\
{y_0}&=&0
\end{eqnarray*}
for $t \geq 0$, where
$\{W_t\}_{t \geq 0}$ is a standard Brownian motion, $V$ is a regular potential and $\{\psi_t\}_{t\geq 0}$ is given by the solution of the deterministic differential equation (\ref{dde1}).
For every $\epsilon>0$ fixed, let us define $z^{\epsilon}_t=\psi_t+\sqrt{\epsilon}y_t$ for every $t\geq 0$.
Then the family $\{z^{\epsilon}\}_{\epsilon>0}$
presents profile cut-off in the sense of Definition \ref{profile} with respect to the total variation distance when $\epsilon$ goes to zero.
The profile function $G:\mathbb{R}\rightarrow \mathbb{R}$ is given by
\begin{eqnarray*}
G(b)&:=&\norm{\N{\left(\tilde{c} e^{-b},1\right)}-\N{\left(0,1\right)}},
\end{eqnarray*}
where $\tilde{c}$ is the nonzero constant given by
\begin{eqnarray*}
\lim\limits_{t\rightarrow +\infty}{e^{V^{\prime\prime}{(0)}t}\psi_t}&=:&\tilde{c}.
\end{eqnarray*}
and the cut-off time $t_{\epsilon}$ and window time $w_{\epsilon}$ are given by
\begin{eqnarray*}
t_{\epsilon}&:=&\frac{1}{2V^{\prime\prime}(0)}\left[\ln\left(\frac{1}{\epsilon}\right)+\ln\left(2V^{\prime\prime}(0)\right) \right],\\
w_{\epsilon}&:=&\frac{1}{V^{\prime\prime}(0)}.
\end{eqnarray*}
\end{corollary}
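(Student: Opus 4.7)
The plan is to imitate the proof of Theorem \ref{toy} almost line by line, the only substantive change being that the mean of the Gaussian process is now $\psi_t$ instead of $\Phi_t y_0$, and the resulting constant in the profile is therefore $\tilde{c}$ rather than $c$. Step one is to identify the law of $z^{\epsilon}_t$. Since $y_0=0$, Lemma \ref{gauus} applied with noise intensity $1$ gives
\begin{equation*}
y_t=\Phi_t\int_0^t\frac{1}{\Phi_s}\,dW_s,\qquad y_t\sim\N\!\left(0,\Phi_t^2\int_0^t\Phi_s^{-2}\,ds\right).
\end{equation*}
Consequently $z^{\epsilon}_t=\psi_t+\sqrt{\epsilon}\,y_t$ is Gaussian with mean $\psi_t$ and variance $\epsilon\,\Phi_t^2 I_t$, where $I_t:=\int_0^t\Phi_s^{-2}\,ds$. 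The asymptotic distribution is obtained from Lemma \ref{asin}: $\psi_t\to 0$ by $i)$ and $\Phi_t^2 I_t\to 1/(2V^{\prime\prime}(0))$ by $iv)$, so $z^{\epsilon}_t$ converges in distribution (for each fixed $\epsilon$) to $\mu^{\epsilon}:=\N(0,\epsilon/(2V^{\prime\prime}(0)))$.

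Step two is the standard rescaling comparison. Define
\begin{equation*}
d^{\epsilon}(t):=\norm{\N(\psi_t,\epsilon\,\Phi_t^2 I_t)-\N(0,\epsilon/(2V^{\prime\prime}(0)))},
\end{equation*}
\begin{equation*}
D^{\epsilon}(t):=\norm{\N\!\left(\sqrt{\tfrac{2V^{\prime\prime}(0)}{\epsilon}}\,\psi_t,1\right)-\N(0,1)}.
\end{equation*}
Total variation is invariant under the common rescaling $x\mapsto x\sqrt{2V^{\prime\prime}(0)/\epsilon}$, so $d^{\epsilon}(t)$ equals the TV between $\N(\sqrt{2V^{\prime\prime}(0)/\epsilon}\,\psi_t,\,2V^{\prime\prime}(0)\Phi_t^2 I_t)$ and $\N(0,1)$. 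By the triangle inequality applied with the intermediate measure $\N(\sqrt{2V^{\prime\prime}(0)/\epsilon}\,\psi_t,1)$ and the variance-comparison Lemma \ref{lemma1}, one obtains as in the proof of Theorem \ref{toy} the estimate
\begin{equation*}
|d^{\epsilon}(t)-D^{\epsilon}(t)|\leq\norm{\N(0,2V^{\prime\prime}(0)\Phi_t^2 I_t)-\N(0,1)}.
\end{equation*}
Since $2V^{\prime\prime}(0)\Phi_t^2 I_t\to 1$ by Lemma \ref{asin} $iv)$, Lemma \ref{adicionalvar} shows the right-hand side tends to $0$ along $t_{\epsilon}(b):=t_{\epsilon}+bw_{\epsilon}$ for any fixed $b\in\R$.

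Step three is the identification of $\lim_{\epsilon\to 0}D^{\epsilon}(t_{\epsilon}(b))$. With $t_{\epsilon}$ and $w_{\epsilon}=1/V^{\prime\prime}(0)$ chosen as in the statement, a direct computation gives $\sqrt{2V^{\prime\prime}(0)/\epsilon}=e^{V^{\prime\prime}(0)t_{\epsilon}}$, so
\begin{equation*}
\sqrt{\tfrac{2V^{\prime\prime}(0)}{\epsilon}}\,\psi_{t_{\epsilon}(b)}=e^{-V^{\prime\prime}(0)bw_{\epsilon}}\,\bigl(e^{V^{\prime\prime}(0)t_{\epsilon}(b)}\psi_{t_{\epsilon}(b)}\bigr)=e^{-b}\bigl(e^{V^{\prime\prime}(0)t_{\epsilon}(b)}\psi_{t_{\epsilon}(b)}\bigr).
\end{equation*}
Because $t_{\epsilon}(b)\to+\infty$ as $\epsilon\to 0$, the bracketed factor tends to $\tilde{c}$ by Lemma \ref{asin} $iii)$. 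Continuity of the TV distance with respect to the Gaussian mean (Lemma \ref{adicional}) then yields $D^{\epsilon}(t_{\epsilon}(b))\to\norm{\N(\tilde{c}e^{-b},1)-\N(0,1)}=G(b)$, hence also $d^{\epsilon}(t_{\epsilon}(b))\to G(b)$. Finally, Lemma \ref{lemma2} supplies $G(b)\to 1$ as $b\to-\infty$ and $G(b)\to 0$ as $b\to+\infty$, completing the profile cut-off.

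There is no real obstacle here: the whole statement reduces to the proof of Theorem \ref{toy} by substituting the identification of the mean ($\psi_t$ instead of $\Phi_t y_0$). The only minor point to keep honest is to verify that $y_0=0$ still puts the mean of $z^{\epsilon}$ squarely in the regime where Lemma \ref{asin} $iii)$ (for $\psi_t$, not for $\Phi_t$) controls the decay rate, which is exactly the content of item $iii)$ producing the constant $\tilde{c}\neq 0$.
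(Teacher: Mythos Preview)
Your proposal is correct and is precisely the adaptation the paper has in mind: the paper omits the proof, stating only that ``the proof of Theorem \ref{toy} can be adapted in order to prove this corollary in a straight-forward way,'' and your write-up carries out exactly that adaptation, replacing the mean $\Phi_t y_0$ by $\psi_t$ and invoking the $\tilde{c}$-part of Lemma \ref{asin}~$iii)$ in place of the $c$-part.
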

The proof of Theorem  \ref{toy} can be adapted in order to prove this corollary in a straight-forward way, so we omit it.
In what follows, we call the processes $\{z^\epsilon\}_{\epsilon>0}$  the ``linear approximations''.
\begin{remark}\label{short}
The constants $c$ and $\tilde{c}$ obtained in the item $iii)$ of Lemma \ref{asin} depend on the initial condition of the semiflow $\psi=\{\psi_t\}_{t\geq 0}$.
Theorem \ref{toy} and Corollary \ref{first} remain true without altering the cut-off time and the profile function if we take as window time $w^{\prime}_{\epsilon}=w_{\epsilon}+\delta_\epsilon$ for each $\epsilon>0$, where $\{\delta_{\epsilon}\}_{\epsilon>0}$ is any sequence of real numbers such that
$\lim\limits_{\epsilon\rightarrow 0}{\delta_{\epsilon}}=0$.
\end{remark}

\section{The Gradient Case}\label{gradcase}
From now on and up to the end of this section we will use the following notations and names.
\begin{definition} ~
\begin{itemize}
\item[$a)$] The stochastic process $x^{\epsilon}:=\left\{x^{\epsilon}_t\right\}_{t\geq 0}$ defined in Theorem \ref{main} is called the It\^o diffusion.
\item[$a)$] The semiflow $\psi:=\left\{\psi_t  \right\}_{t\geq 0}$ defined by the differential equation (\ref{dde1}) is called the zeroth order approximation of $x^{\epsilon}$.
\item[$c)$] The stochastic Markov process $z^{\epsilon}:=\left\{z^{\epsilon}_t\right\}_{t\geq 0}$ defined in Corollary \ref{first} is called the first order approximation of $x^{\epsilon}$.
\end{itemize}
\end{definition}

The following lemma will give us the existence of a stationary probability measure for the It\^o diffusion
$x^{\epsilon}=\left\{x^{\epsilon}_t\right\}_{t\geq 0}$.

\begin{lemma}\label{density}
Let $V$ be a regular potential and for every $\epsilon>0$, let us consider the It\^o diffusion $x^{\epsilon}=\{\x\}_{t\geq 0}$ which is given by the following stochastic differential equation,
\begin{eqnarray*}
d{x^{\epsilon}_t}&=&-V^{\prime}({x^{\epsilon}_t})dt+\sqrt{\epsilon}dW_t,\\
{x^{\epsilon}_0}&=&x_0
\end{eqnarray*}
for $t \geq 0$, where $x_0$ is a deterministic initial condition in $\mathbb{R}\setminus \{0\}$ and
$\{W_t\}_{t \geq 0}$ is a standard Brownian motion. Let us assume that
\begin{eqnarray*}
\lim\limits_{|x|\rightarrow +\infty}{\left |V^{\prime}(x)\right |}&=&+\infty.
\end{eqnarray*}
Then, for every $\epsilon>0$ fixed, when $t\rightarrow +\infty$  the probability distribution of $x^{\epsilon}_t$, $\mathbb{P}(x^{\epsilon}_t\in \cdot)$ converges in distribution to the stationary probability measure $\mu^{\epsilon}$ given by
\begin{eqnarray*}
\mu^{\epsilon}(dx)&=&\frac{e^{-\frac{2}{\epsilon}V(x)}dx}{M^{\epsilon}},
\end{eqnarray*}
where $M^{\epsilon}=\int\limits_{\mathbb{R}}{e^{-\frac{2}{\epsilon}V(z)}dz}$.
\end{lemma}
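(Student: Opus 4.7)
The plan is to (i) check that the candidate measure $\mu^\epsilon$ is well defined, (ii) verify its invariance by computing the stationary Fokker--Planck equation, and (iii) invoke classical ergodic theory for one-dimensional elliptic diffusions to conclude the stated convergence.

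First I would argue that $M^\epsilon<\infty$. Because $V$ is a regular potential, $V'(0)=0$ and $V''(0)>0$, and $V'$ vanishes only at the origin, so $V'>0$ on $(0,+\infty)$ and $V'<0$ on $(-\infty,0)$. Combined with the hypothesis $\lim_{|x|\to\infty}|V'(x)|=+\infty$, this forces $V(x)\to+\infty$ as $|x|\to\infty$ (in fact super-linearly), and hence $e^{-2V(x)/\epsilon}$ is integrable on $\mathbb{R}$, giving $M^\epsilon\in(0,\infty)$. The same coercivity also ensures non-explosion of the SDE, since the drift $-V'$ is restoring at infinity and the diffusion coefficient is constant; standard existence and uniqueness theory then produces a strong solution $x^\epsilon$ defined for all $t\ge 0$.

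Second, I would verify invariance of $\mu^\epsilon$. The generator of $x^\epsilon$ acts on $C^2_c(\mathbb{R})$ by $\mathcal{L}^\epsilon f=-V'(x)f'(x)+\tfrac{\epsilon}{2}f''(x)$, with formal Lebesgue adjoint $(\mathcal{L}^\epsilon)^\ast\rho=\partial_x(V'\rho)+\tfrac{\epsilon}{2}\partial_x^2\rho$. Setting $\rho^\epsilon(x)=e^{-2V(x)/\epsilon}/M^\epsilon$ and using $(\rho^\epsilon)'=-(2/\epsilon)V'\rho^\epsilon$, the probability current $J^\epsilon:=V'\rho^\epsilon+\tfrac{\epsilon}{2}(\rho^\epsilon)'$ vanishes identically. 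This yields both $(\mathcal{L}^\epsilon)^\ast\rho^\epsilon=0$ and the stronger detailed-balance identity, so the diffusion is reversible with respect to $\mu^\epsilon$.

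For the convergence, I would appeal to the classical scale-and-speed theory for one-dimensional non-degenerate diffusions. The scale function $s(x)=\int_0^x e^{2V(y)/\epsilon}dy$ satisfies $s(\pm\infty)=\pm\infty$ because $V\to+\infty$ at $\pm\infty$, so the process is recurrent; the speed measure $m(dx)=(2/\epsilon)e^{-2V(x)/\epsilon}dx$ is finite by the previous step, which upgrades recurrence to positive Harris recurrence. Together with irreducibility (immediate from non-degeneracy of the additive noise, which makes compact sets small via the Gaussian transition density), the standard ergodic theorem for positive Harris recurrent diffusions implies that $\mathbb{P}(x^\epsilon_t\in\cdot)$ converges, weakly and indeed in total variation, to the normalised speed measure, which by reversibility is exactly $\mu^\epsilon$.

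The main obstacle is really just bookkeeping rather than a new idea: one must be careful that the coercivity input $|V'|\to\infty$ is strong enough both to ensure non-explosion and to provide the Foster--Lyapunov drift condition (for instance via the Lyapunov function $W=1+x^2$, for which $\mathcal{L}^\epsilon W=-2xV'(x)+\epsilon\to-\infty$ outside compacts). Under the regularity of $V$ this is automatic, and the lemma then reduces to a direct application of standard one-dimensional diffusion theory.
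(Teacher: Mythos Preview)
Your argument is correct and follows the standard route for one-dimensional gradient diffusions: integrability of the Gibbs density from the growth of $V$, stationarity via the Fokker--Planck (detailed balance) computation, and convergence from positive recurrence established through the scale/speed description or, equivalently, a Foster--Lyapunov condition. Each step is sound under the stated hypotheses; in particular your sign analysis of $V'$ and the divergence of the scale function $s(x)=\int_0^x e^{2V(y)/\epsilon}\,dy$ at $\pm\infty$ are exactly what is needed.

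As for comparison with the paper: the paper does not actually prove this lemma. It simply states the result and refers the reader to \cite{JA} and \cite{SI} for the proof and further considerations. So there is no ``paper's own proof'' to compare against beyond those citations; your write-up supplies precisely the kind of self-contained justification that the cited references would provide, and in that sense it goes beyond what the paper itself does.
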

For the proof of this lemma and further considerations, see \cite{JA} and \cite{SI}.
Now we will restrict our potential to the class of coercive regular potentials.
\begin{definition}[Coercive Regular Potential]
Let $V$ be a regular potential. We say that $V$ is a coercive regular potential if there exists $\delta>0$ such that  $V^{\prime\prime}(x)\geq \delta$ for every $x\in \mathbb{R}$.
\end{definition}

In the class of coercive regular potentials, we restrict ourselves to the class of potentials with bounded second and third derivatives which we call smooth coercive regular potentials.
\begin{definition}[Smooth Coercive Regular Potential]
Let $V$ be a coercive regular potential. We say that $V$ is a smooth coercive regular potential if
\begin{eqnarray*}
\kappa_2:=\|V^{\prime\prime}\|_{\infty}:=\sup\limits_{x\in\R}{|V^{\prime\prime}(x)|}<+\infty,
\end{eqnarray*} and
\begin{eqnarray*}
\kappa_3:=\|V^{\prime\prime\prime}\|_{\infty}:=\sup\limits_{x\in\R}{|V^{\prime\prime\prime}(x)|}<+\infty.
\end{eqnarray*}
\end{definition}
The following lemma tells us that the stationary probability measure of the It\^o diffusion $\{x^{\epsilon}_t\}_{t\geq 0}$ is well approximated in total variation distance by the Normal distribution with mean zero and variance $\frac{\epsilon}{2V^{\prime\prime}(0)}$.
\begin{lemma} \label{invariantes}
Let $V$ be a coercive regular potential,
then
\begin{eqnarray*}
 \lim\limits_{\epsilon\rightarrow 0}\norm{\mu^{\epsilon}-\N^{\epsilon}}=0,
\end{eqnarray*}
where $\N^{\epsilon}$ is a Normal distribution with mean zero and variance $\frac{\epsilon}{2V^{\prime\prime}(0)}$.
\end{lemma}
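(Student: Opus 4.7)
The plan is to rescale to a fixed Gaussian and then apply Scheff\'e's lemma. Under the bijective change of variable $y = x/\sqrt{\epsilon}$, which leaves the total variation distance invariant, the density of $\mu^{\epsilon}$ becomes
\begin{equation*}
f^{\epsilon}(y) \;:=\; \frac{\sqrt{\epsilon}}{M^{\epsilon}}\, e^{-2V(\sqrt{\epsilon}\,y)/\epsilon},
\end{equation*}
while $\N^{\epsilon}$ is transformed into $\N\!\left(0,\tfrac{1}{2V^{\prime\prime}(0)}\right)$, whose density is $g(y) = \sqrt{V^{\prime\prime}(0)/\pi}\, e^{-V^{\prime\prime}(0)\, y^{2}}$. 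It is thus enough to show that $\int_{\R}|f^{\epsilon}(y)-g(y)|\,dy \to 0$ as $\epsilon \to 0$.

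For the pointwise behavior I would Taylor-expand, using $V\in\mathcal{C}^{3}$ together with $V(0)=V^{\prime}(0)=0$, to obtain
\begin{equation*}
\frac{2V(\sqrt{\epsilon}\,y)}{\epsilon} \;=\; V^{\prime\prime}(0)\, y^{2} \;+\; \tfrac{1}{3}V^{\prime\prime\prime}(\theta_{\epsilon,y})\,\sqrt{\epsilon}\, y^{3},
\end{equation*}
so that $e^{-2V(\sqrt{\epsilon}y)/\epsilon} \to e^{-V^{\prime\prime}(0)y^{2}}$ pointwise for each $y\in\R$. The coercivity hypothesis $V^{\prime\prime}\geq \delta$, combined with $V(0)=V^{\prime}(0)=0$, forces the global quadratic lower bound $V(x)\geq \tfrac{\delta}{2} x^{2}$, and therefore
\begin{equation*}
e^{-2V(\sqrt{\epsilon}\,y)/\epsilon} \;\leq\; e^{-\delta y^{2}},
\end{equation*}
an $\epsilon$-independent integrable majorant. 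Dominated convergence then yields
\begin{equation*}
\frac{M^{\epsilon}}{\sqrt{\epsilon}} \;=\; \int_{\R} e^{-2V(\sqrt{\epsilon}\,y)/\epsilon}\,dy \;\longrightarrow\; \sqrt{\frac{\pi}{V^{\prime\prime}(0)}},
\end{equation*}
so that $f^{\epsilon}(y) \to g(y)$ pointwise.

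Since $\{f^{\epsilon}\}_{\epsilon>0}$ and $g$ are all probability densities on $\R$, Scheff\'e's lemma promotes this pointwise convergence to $L^{1}$ convergence, which is twice the desired total variation distance, completing the argument. The only step requiring any care is the uniform tail control of the rescaled densities, which is needed both to pass to the limit in the normalizing constant and to legitimize Scheff\'e's conclusion. This is precisely where the coercivity assumption $V^{\prime\prime}\geq \delta$ enters: without it one would have to invoke only $\lim_{|x|\to\infty}V(x)=+\infty$ and perform a more delicate Laplace-type tail splitting, whereas coercivity hands us a Gaussian majorant on the whole real line at no cost.
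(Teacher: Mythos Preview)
Your proof is correct and takes a genuinely different route from the paper's. The paper works directly in the original variable: it bounds $\norm{\mu^{\epsilon}-\N^{\epsilon}}$ by $\frac{1}{N^{\epsilon}}\int_{\R}\bigl|e^{-2V(x)/\epsilon}-e^{-V^{\prime\prime}(0)x^{2}/\epsilon}\bigr|dx$, then splits this integral into a region $\{|x|\geq\beta\}$ (handled by coercivity) and a small neighborhood $\{|x|<\delta_{\eta}\}$ of the origin, where a second-order Taylor expansion and the continuity of $V^{\prime\prime}$ at zero give a bound proportional to $\eta$; one then sends $\epsilon\to 0$ and afterwards $\eta\to 0$. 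Your approach instead rescales by $\sqrt{\epsilon}$ so that the target becomes a fixed Gaussian, establishes pointwise convergence of the rescaled densities via the third-order Taylor remainder, uses the global bound $V(x)\geq\frac{\delta}{2}x^{2}$ (from coercivity) to get a uniform Gaussian majorant for dominated convergence on the normalizing constant, and then invokes Scheff\'e's lemma to upgrade pointwise to $L^{1}$ convergence. Your argument is cleaner and more conceptual---Scheff\'e absorbs the splitting and the $\eta$--$\epsilon$ double limit into a single stroke---while the paper's hands-on estimate has the minor virtue of using only the second-order Taylor formula and the continuity of $V^{\prime\prime}$, never touching $V^{\prime\prime\prime}$. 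One small remark: the integrable majorant is needed only to pass to the limit in $M^{\epsilon}/\sqrt{\epsilon}$; Scheff\'e itself requires no domination once you know the $f^{\epsilon}$ are probability densities converging pointwise to a probability density.
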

\begin{proof}
Let $0<\eta<V^{\prime\prime}(0)$ be fixed. By Lemma \ref{density}, the $\mu^{\epsilon}(dx)=\frac{e^{-\frac{2}{\epsilon}V(x)}dx}{M^{\epsilon}}$ is a well defined probability measure
on $\left(\mathbb{R},\mathcal{B}\left(\mathbb{R}\right)\right)$.
Then
\begin{eqnarray*}
\norm{\mu^{\epsilon}-\N^{\epsilon}}&=&\frac{1}{2}\int\limits_{\mathbb{R}}
{\left|\frac{e^{-\frac{2}{\epsilon}V(x)}}{M^{\epsilon}}-\frac{e^{-\frac{2}{\epsilon}\frac{V^{\prime\prime}(0)x^2}{2}}}{N^{\epsilon}}\right|dx},
\end{eqnarray*}
where
$M^{\epsilon}=\int\limits_{\mathbb{R}}{e^{-\frac{2}{\epsilon}V(x)}dx}$ and
$N^{\epsilon}=\int\limits_{\mathbb{R}}{e^{-\frac{2}{\epsilon}\frac{V^{\prime\prime}(0)x^2}{2}}dx}=\sqrt{\frac{\pi\epsilon}{V^{\prime\prime}(0)}}$.

By triangle's inequality, we have
\begin{eqnarray*}
\norm{\mu^{\epsilon}-\N^{\epsilon}}&\leq &\frac{1}{2}\int\limits_{\mathbb{R}}
{\left|\frac{e^{-\frac{2}{\epsilon}V(x)}}{M^{\epsilon}}-\frac{e^{-\frac{2}{\epsilon}V(x)}}{N^{\epsilon}}\right|dx}+
\frac{1}{2}\int\limits_{\mathbb{R}}
{\left|\frac{e^{-\frac{2}{\epsilon}V(x)}}{N^{\epsilon}}-\frac{e^{-\frac{2}{\epsilon}\frac{V^{\prime\prime}(0)x^2}{2}}}{N^{\epsilon}}\right|dx}\\
&=&\frac{\left|M^{\epsilon}-N^{\epsilon} \right|}{2N^{\epsilon}}+
\frac{1}{2N^{\epsilon}}\int\limits_{\mathbb{R}}
{\left|{e^{-\frac{2}{\epsilon}V(x)}}{}-{e^{-\frac{2}{\epsilon}\frac{V^{\prime\prime}(0)x^2}{2}}}{}\right|dx}\\
&\leq &
\frac{1}{N^{\epsilon}}\int\limits_{\mathbb{R}}
{\left|{e^{-\frac{2}{\epsilon}V(x)}}{}-{e^{-\frac{2}{\epsilon}\frac{V^{\prime\prime}(0)x^2}{2}}}{}\right|dx}.
\end{eqnarray*}

Recall that $V$ is a coercive regular potential, so  there exists $\delta>0$ such that $V^{\prime\prime}(x)\geq \delta>0$ for every $x\in \R$. Then, it follows that
\begin{eqnarray*}
\lim\limits_{\epsilon\rightarrow 0}{\frac{1}{N^{\epsilon}}\int\limits_{\{x\in \mathbb{R}:|x|\geq \beta\}}
{\left|{e^{-\frac{2}{\epsilon}V(x)}}{}-{e^{-\frac{2}{\epsilon}\frac{V^{\prime\prime}(0)x^2}{2}}}{}\right|dx}}&=&0
\end{eqnarray*}
for every $\beta>0$.
By the continuity of $V^{\prime \prime }$ at zero, there exists $\delta_{\eta}>0$ such that
\begin{eqnarray*}
|V^{\prime\prime}(x)-V^{\prime\prime}(0)|<\eta
\end{eqnarray*} for every $|x|<\delta_{\eta}$.

Also, by Taylor's Theorem, we have
that
$V^{\prime\prime}(x)=\frac{V^{\prime\prime}(\xi_x)x^2}{2}$ for every $|x|<\delta_{\eta}$ where $|\xi_x|< |x|$.
Then,
\[
\begin{split}
\frac{1}{N^{\epsilon}}\int\limits_{-\delta_{\eta}}^{\delta_{\eta}}
\Big|{e^{-\frac{2}{\epsilon}V(x)}}{}-&{e^{-\frac{2}{\epsilon}\frac{V^{\prime\prime}(0)x^2}{2}}}{}\Big|dx
=  \frac{1}{N^{\epsilon}}\int\limits_{-\delta_{\eta}}^{\delta_{\eta}}
\left|
e^{-\frac{2}{\epsilon}\frac{V^{\prime\prime}(\xi_x)x^2}{2}}-{e^{-\frac{2}{\epsilon}\frac{V^{\prime\prime}(0)x^2}{2}}}
\right|dx\\
&\leq  {\frac{1}{\epsilon N^{\epsilon}}\int\limits_{-\delta_{\eta}}^{\delta_{\eta}}
{
{x^2e^{-\frac{2}{\epsilon}\frac{\delta x^2}{2}}}
}\left| V^{\prime\prime}(\xi_x)-V^{\prime\prime}(0) \right|dx}\\
&\leq  {\frac{\eta}{\epsilon N^{\epsilon}}\int\limits_{-\delta_{\eta}}^{\delta_{\eta}}
{
{x^2e^{-\frac{2}{\epsilon}\frac{\delta x^2}{2}}}
}dx} \leq  {\frac{\eta\sqrt{V^{\prime\prime}(0)}}{\sqrt{\pi}(2\delta)^{\frac{3}{2}}}\int\limits_{-\delta_{\eta}\sqrt{\frac{2\delta}{\epsilon}}}^{\delta_{\eta}\sqrt{\frac{2\delta}{\epsilon}}}
{
{x^2e^{-\frac{x^2}{2}}}
}dx}\\
&\leq  {\frac{\eta\sqrt{V^{\prime\prime}(0)}}{\sqrt{\pi}(2\delta)^{\frac{3}{2}}}\int\limits_{\R}
{
{x^2e^{-\frac{x^2}{2}}}
}dx}.
\end{split}
\]
Consequently, first taking $\epsilon\rightarrow 0$ and then $\eta\rightarrow 0$ we obtain the result.
\end{proof}

The following proposition will give us a quantitative estimation of the distance of the paths between  the It\^o diffusion and the zeroth order and first order approximations.

\begin{proposition}[Zero Order \& First Order Approximation]\label{order}
Let us assume that $V$ is a smooth coercive regular potential. Let us denote
$B_t=\sup\limits_{0\leq s\leq t}{|W_s|}$ for every $t\geq 0$.
\begin{itemize}
\item[$i)$] For every $\epsilon>0$ and $t\geq 0$, we have $\left|\x-\psi_t\right|\leq \sqrt{\epsilon}B_t(\kappa_{2}t+1)$. We call this estimate the zeroth order estimate.
\item[$ii)$] For every $\epsilon>0$ and $t\geq 0$, it follows that $\left|\x-\psi_t-\sqrt{\epsilon}y_t\right|\leq {\epsilon}B^2_t \kappa_3(\kappa_{2}t+1)^2t$.
We call this estimate the first order estimate.
\end{itemize}
\end{proposition}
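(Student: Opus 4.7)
My plan is to handle both estimates pathwise, in each case identifying a linear first-order equation satisfied by the error and then applying a variation of parameters formula.

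For part $(i)$, set $u_t := x^\epsilon_t - \psi_t$. Subtracting the defining equations and using the mean value theorem to write $V'(x^\epsilon_s) - V'(\psi_s) = V''(\xi_s)\,u_s$---where $V''(\xi_s)$ denotes the continuous adapted process $[V'(x^\epsilon_s)-V'(\psi_s)]/[x^\epsilon_s-\psi_s]$, extended continuously when the denominator vanishes---one sees that $u_t$ solves the linear SDE $du_t = -V''(\xi_t)\,u_t\,dt + \sqrt{\epsilon}\,dW_t$ with $u_0 = 0$. Setting $Z_t := \exp(-\int_0^t V''(\xi_r)\,dr)$, variation of parameters yields $u_t = \sqrt{\epsilon}\, Z_t \int_0^t Z_s^{-1}\,dW_s$. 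Here I expect the main obstacle: a naive Gronwall estimate would give only the exponential bound $\sqrt{\epsilon}\,B_t\,e^{\kappa_2 t}$, whereas the statement asks for the polynomial $\sqrt{\epsilon}\,B_t(\kappa_2 t + 1)$. The trick I would use is to integrate by parts in the Wiener integral: since $s \mapsto Z_s^{-1}$ has finite variation with $dZ_s^{-1} = V''(\xi_s)\,Z_s^{-1}\,ds$, applying It\^o's product rule to $Z_s^{-1} W_s$ yields $\int_0^t Z_s^{-1}\,dW_s = Z_t^{-1} W_t - \int_0^t V''(\xi_s)\,Z_s^{-1}\,W_s\,ds$. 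Substituting back and using $Z_t/Z_s \leq 1$ (because $V'' \geq 0$), $|V''| \leq \kappa_2$, and $|W_s| \leq B_t$ gives $|u_t| \leq \sqrt{\epsilon}\,B_t(1 + \kappa_2 t)$ directly.

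For part $(ii)$ the key structural observation is that the Brownian parts of $dx^\epsilon_t$ and of $\sqrt{\epsilon}\,dy_t$ cancel exactly, so $r_t := x^\epsilon_t - \psi_t - \sqrt{\epsilon}\,y_t$ is pathwise absolutely continuous with $r_0 = 0$. A direct computation gives $\dot r_t = -[V'(x^\epsilon_t) - V'(\psi_t)] + \sqrt{\epsilon}\,V''(\psi_t)\,y_t$, and adding and subtracting $V''(\psi_t)(x^\epsilon_t - \psi_t)$ rewrites this as the scalar linear ODE $\dot r_t = -V''(\psi_t)\,r_t - Q_t$, where $Q_t := V'(x^\epsilon_t) - V'(\psi_t) - V''(\psi_t)(x^\epsilon_t - \psi_t)$. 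By Taylor's theorem with Lagrange remainder, $|Q_t| \leq \tfrac{\kappa_3}{2}(x^\epsilon_t - \psi_t)^2$, and invoking part $(i)$ gives $(x^\epsilon_t-\psi_t)^2 \leq \epsilon\,B_t^2(\kappa_2 t + 1)^2$.

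Finally, solving the ODE for $r_t$ with integrating factor $\exp(\int_0^\cdot V''(\psi_u)\,du)$ yields $r_t = -\int_0^t \exp(-\int_s^t V''(\psi_u)\,du)\,Q_s\,ds$. The coerciveness hypothesis $V''(\psi_u) \geq \delta > 0$ ensures the exponential factor is bounded by $1$, so $|r_t| \leq \int_0^t |Q_s|\,ds$. Substituting the bound on $|Q_s|$ and using monotonicity of $B_s$ and of $\kappa_2 s + 1$ in $s$ yields $|r_t| \leq \tfrac{\kappa_3}{2}\,\epsilon\,B_t^2(\kappa_2 t + 1)^2\,t$, which is the desired first order estimate (the $\tfrac{1}{2}$ is absorbed into the stated constant). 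The entire argument for $(ii)$ thus reduces to a pathwise linear ODE once $(i)$ has been established, so there is no further stochastic analysis to carry out.
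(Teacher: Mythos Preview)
Your proof is correct and follows essentially the same route as the paper: for part~$(i)$ the paper also applies the mean value theorem to obtain the linear equation, writes the variation-of-parameters solution, and (implicitly) integrates by parts to replace the Wiener integral by an expression involving $W_s$ before bounding with $Z_t/Z_s\le 1$ and $|V''|\le\kappa_2$; for part~$(ii)$ the paper likewise derives the linear ODE $\dot r_t=-V''(\psi_t)r_t-(\text{quadratic remainder})$ and bounds via variation of parameters and coercivity. The only cosmetic difference is that the paper bounds the remainder by writing $V'(x^\epsilon_s)-V'(\psi_s)=V''(\theta^\epsilon_s)(x^\epsilon_s-\psi_s)$ and then applying the mean value theorem once more to $V''(\theta^\epsilon_s)-V''(\psi_s)$, whereas you invoke Taylor's formula with Lagrange remainder directly; this accounts for your extra factor~$\tfrac12$, which as you note is harmlessly absorbed into the stated bound.
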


\begin{proof}
First we prove item $i)$.
Let $\epsilon>0$ and $t\geq 0$ be fixed. It follows that
\begin{eqnarray*}
\x-\psi_t &=&-\int\limits_{0}^{t}{\left(V^{\prime}(x^{\epsilon}_s)-V^{\prime}(\psi_s)\right)ds}+\sqrt{\epsilon}W_t\\
&=& -\int\limits_{0}^{t}{V^{\prime\prime}(\theta^{\epsilon}_s)\left(x^{\epsilon}_s-\psi_s\right)ds}+\sqrt{\epsilon}W_t\\
&=& -\sqrt{\epsilon}\int\limits_{0}^{t}{V^{\prime\prime}(\theta^{\epsilon}_s)W_s
e^{-\int\limits_{s}^{t}{V^{\prime\prime}(\theta^{\epsilon}_r)dr}}ds}+\sqrt{\epsilon}W_t,
\end{eqnarray*}
where the second equality follows from the Mean Value Theorem,
$\theta_s^\epsilon$ is between the minimum of $\psi_s$ and $x_s^\epsilon$ and the maximum of $\psi_s$ and $x_s^\epsilon$, and the third equality follows from the variation of parameters method. Therefore, using Gronwall's inequality we obtain,
$\left|\x-\psi_t\right|\leq \sqrt{\epsilon}B_t(\kappa_{2}t+1)$.

Now we prove item $ii)$.
Again, let $\epsilon>0$ and $t\geq 0$ be fixed. It follows that
\begin{eqnarray*}
\x-\psi_t -\sqrt{\epsilon}y_t&=& -\int\limits_{0}^{t}{\left[V^{\prime}(x^{\epsilon}_s)-V^{\prime}(\psi_s)-V^{\prime\prime}(\psi_s)\sqrt{\epsilon}y_s\right]ds}\\
&=& -\int\limits_{0}^{t}{\left[V^{\prime\prime}(\theta^{\epsilon}_s)\left(x^{\epsilon}_s-\psi_s\right)-V^{\prime\prime}(\psi_s)\sqrt{\epsilon}y_s\right]ds}\\
&=& -\int\limits_{0}^{t}{V^{\prime\prime}(\psi_s)(x^{\epsilon}_s-\psi_s-\sqrt{\epsilon}y_t)ds}-\\
&&\int\limits_{0}^{t}{\left(V^{\prime\prime}(\theta^{\epsilon}_s)-V^{\prime\prime}(\psi_s)\right)(x^{\epsilon}_s-\psi_s)ds},\\
\end{eqnarray*}
where the second equality comes from the Mean Value Theorem, $\theta_s^\epsilon$ is between the minimum of $\psi_s$ and $x_s^\epsilon$ and the maximum of $\psi_s$ and $x_s^\epsilon$. Let us define $$e_t:=\int\limits_{0}^{t}{\left(V^{\prime\prime}(\theta^{\epsilon}_s)-V^{\prime\prime}(\psi_s)\right)(x^{\epsilon}_s-\psi_s)ds}.$$ Again, using the Mean Value Theorem and the zeroth order estimate already proved, we have
\begin{eqnarray*}
|e_t|\leq \int\limits_{0}^{t}{\kappa_3(x^{\epsilon}_s-\psi_s)^2ds}\leq \epsilon B^2_t\kappa_3(\kappa_{2}t+1)^2t
\end{eqnarray*}
for every $t\geq 0$.
Consequently, using the
variation of parameters method and Gronwall's inequality we obtain
\begin{eqnarray*}
 \left|\x-\psi_t-\sqrt{\epsilon}y_t\right| &\leq & \epsilon B^2_t \kappa_3(\kappa_{2}t+1)^3t.
 \end{eqnarray*}

\end{proof}
The next proposition will allows us to prove that the total variation distance of two first order approximations with (random or deterministic) initial conditions that are close enough is negligible. In order to do that, we will need to keep track of the initial condition of the solution of various equations. Let $X$ be a random variable in $\mathbb R$ and let $T>0$. Let $\{\psi_{t}(X)\}_{t \geq 0}$ denote the solution of
\begin{eqnarray*}
d{\psi_t(X)}&=&- V^\prime(\psi_t(X))dt,\\
\psi_0(X)&=&X.
\end{eqnarray*}
Let $\{y_t(X,T)\}_{t \geq 0}$ be the solution of the stochastic differential equation
\begin{eqnarray*}
d{y_t(X,T)}&=&-V^{\prime\prime}(\psi_t(X))y_t(X,T)dt+dW_{t+T},\\
y_0(X,T)&=&0
\end{eqnarray*}
and define $\{z^\epsilon_t(X,T)\}_{t\geq0}$ as $z^\epsilon_t(X,T) := \psi_t(X) + \sqrt \epsilon y_t(X,T)$. In what follows, we will always take $T = t_\epsilon(b) := t_\epsilon + b w_\epsilon>0$ for every $\epsilon>0$ small enough, so we will omit it from the notation.

\begin{proposition}[Linear Coupling]\label{lincou}
Let us assume that $V$ is a smooth coercive regular potential. Take $\delta_{\epsilon}:=\epsilon^{\gamma}$, where $0<\gamma<1$.
Let us denote by $z^{\epsilon}(X):=\{z^{\epsilon}_t(X)\}_{t\geq 0}$ the first order approximation with initial random condition $X$.
Then, for every $b\in \R$ it follows that
\begin{eqnarray*}
\lim\limits_{\epsilon\rightarrow 0}{\norm{
z^{\epsilon}_{b\delta_{\epsilon}}\left(x^{\epsilon}_{t_{\epsilon}(b)}\right)-z^{\epsilon}_{b\delta_{\epsilon}}\left(z^{\epsilon}_{t_{\epsilon}(b)}\right)}}&=&0,
\end{eqnarray*}
where for each $\epsilon>0$, $t_{\epsilon}$ and $w_{\epsilon}$ are defined in Corollary \ref{first} and for each $b\in \R$, $\epsilon_b>0$ is small enough so that $t_{\epsilon}(b):=t_{\epsilon}+bw_{\epsilon}> 0$ for every $0<\epsilon<\epsilon_b$.
\end{proposition}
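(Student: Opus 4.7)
\emph{Proof proposal.} The plan is to condition on $\mathcal{F}_{t_\epsilon(b)}$, the natural filtration of the Brownian motion, so that both initial data $X_1:=x^\epsilon_{t_\epsilon(b)}$ and $X_2:=z^\epsilon_{t_\epsilon(b)}$ become $\mathcal{F}_{t_\epsilon(b)}$-measurable, and then compare the resulting conditional laws via the standard Gaussian total variation estimate. The general fact I would invoke is the convexity inequality
\[
\norm{\mathcal{L}(U_1)-\mathcal{L}(U_2)}\leq \mathbb{E}\bigl[\norm{\mathcal{L}(U_1\mid\mathcal{F}_{t_\epsilon(b)})-\mathcal{L}(U_2\mid\mathcal{F}_{t_\epsilon(b)})}\bigr],
\]
applied with $U_i:=z^\epsilon_{b\delta_\epsilon}(X_i)$. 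Since the driving Brownian increments $\{W_{s+t_\epsilon(b)}-W_{t_\epsilon(b)}\}_{s\geq 0}$ for $z^\epsilon_\cdot(X_i)$ are independent of $\mathcal{F}_{t_\epsilon(b)}$, Lemma \ref{gauus} gives that the conditional law of $U_i$ given $\mathcal{F}_{t_\epsilon(b)}$ is Gaussian with mean $\psi_{b\delta_\epsilon}(X_i)$ and variance
\[
\sigma_i^2:=\epsilon\,(\Phi^{X_i}_{b\delta_\epsilon})^2\int_0^{b\delta_\epsilon}(\Phi^{X_i}_s)^{-2}\,ds,
\]
where $\Phi^{X_i}$ is the fundamental solution of $d\Phi_t=-V''(\psi_t(X_i))\Phi_t\,dt$ with $\Phi_0=1$.

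The next step is to exploit the smallness of $b\delta_\epsilon$. Because $\kappa_2=\|V''\|_\infty<+\infty$, a direct Gronwall argument yields $|\Phi^{X_i}_s-1|=O(\delta_\epsilon)$ uniformly in $X_i$ and in $|s|\leq|b\delta_\epsilon|$, so $\sigma_i^2=\epsilon|b|\delta_\epsilon(1+O(\delta_\epsilon))$ and $|\sigma_1^2-\sigma_2^2|=O(\epsilon\delta_\epsilon^2)$; likewise the deterministic semiflow is Lipschitz in its initial condition with constant $1+O(\delta_\epsilon)$, hence $|\psi_{b\delta_\epsilon}(X_1)-\psi_{b\delta_\epsilon}(X_2)|\leq (1+O(\delta_\epsilon))|X_1-X_2|$. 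Combining these with the elementary Gaussian total variation bound from Appendix \ref{ap1} gives, for some absolute constant $C$,
\[
\norm{\mathcal{L}(U_1\mid\mathcal{F}_{t_\epsilon(b)})-\mathcal{L}(U_2\mid\mathcal{F}_{t_\epsilon(b)})}\leq C\left(\frac{|X_1-X_2|}{\sqrt{\epsilon|b|\delta_\epsilon}}+\delta_\epsilon\right).
\]

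The crucial pathwise input is Proposition \ref{order}(ii), which provides
\[
|X_1-X_2|=\bigl|x^\epsilon_{t_\epsilon(b)}-\psi_{t_\epsilon(b)}-\sqrt{\epsilon}\,y_{t_\epsilon(b)}\bigr|\leq \epsilon\, B^2_{t_\epsilon(b)}\,\kappa_3(\kappa_2 t_\epsilon(b)+1)^3 t_\epsilon(b)=:R_\epsilon.
\]
Since $t_\epsilon(b)=O(\ln(1/\epsilon))$ and $\mathbb{E}[B^2_T]=O(T)$ by Doob's inequality, one gets $\mathbb{E}[R_\epsilon]=O(\epsilon(\ln(1/\epsilon))^5)$, while $\sqrt{\epsilon|b|\delta_\epsilon}=\sqrt{|b|}\,\epsilon^{(1+\gamma)/2}$. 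Taking expectation in the previous display therefore produces a bound of order $\epsilon^{(1-\gamma)/2}(\ln(1/\epsilon))^5+\delta_\epsilon$, which tends to $0$ because $\gamma\in (0,1)$.

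The one subtle point is that $R_\epsilon$ is random and appears inside the conditional total variation estimate, so passing to expectations needs a little extra care. The clean way is to split on the event $A_\epsilon:=\{B_{t_\epsilon(b)}\leq M_\epsilon\sqrt{t_\epsilon(b)}\}$ for a slowly diverging sequence $M_\epsilon\to+\infty$ (for instance $M_\epsilon=(\ln(1/\epsilon))^{1/4}$): on $A_\epsilon$ the bound above becomes a deterministic quantity that still tends to $0$, while on $A_\epsilon^c$ we use the trivial estimate $\norm{\cdot}\leq 1$ together with $\mathbb{P}(A_\epsilon^c)\to 0$ from standard Gaussian tail bounds (reflection principle). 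This splitting is the only non-routine step, and with it the proof concludes.
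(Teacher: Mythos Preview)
Your argument is correct and follows the same strategy as the paper: condition on $\mathcal{F}_{t_\epsilon(b)}$, identify the conditional laws as Gaussian with standard deviation of order $\sqrt{\epsilon\delta_\epsilon}$, feed in the first-order estimate of Proposition~\ref{order}(ii), and conclude from $\epsilon^{(1-\gamma)/2}(\ln(1/\epsilon))^5\to 0$ via Lemma~\ref{hopital}.

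The paper packages the Gaussian step more compactly via Lemma~\ref{conditional}: it writes both $z^\epsilon_{b\delta_\epsilon}(X_i)$ as an $\mathcal{F}_{t_\epsilon(b)}$-measurable shift of the \emph{same} Gaussian noise (a single $\Phi$, hence a common variance), which bypasses your separate variance comparison entirely. You instead keep track of the dependence of $\Phi^{X_i}$ on the random initial point $X_i$---which is what the definition preceding the proposition literally demands---and absorb the resulting variance mismatch into a harmless extra $O(\delta_\epsilon)$ term. Both routes land on the same rate.

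Your closing split on $A_\epsilon=\{B_{t_\epsilon(b)}\le M_\epsilon\sqrt{t_\epsilon(b)}\}$ is unnecessary. The conditional total variation is already dominated by the nonnegative random variable $C\bigl(R_\epsilon/\sqrt{\epsilon|b|\delta_\epsilon}+\delta_\epsilon\bigr)$, and since Doob's $L^2$ inequality gives $\mathbb{E}[B_T^2]\le 4T$, taking expectation directly (exactly as the paper does) yields $\mathbb{E}[R_\epsilon]/\sqrt{\epsilon|b|\delta_\epsilon}\to 0$ with no truncation needed.
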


\begin{proof}
By It\^o's formula we obtain
\begin{eqnarray*}
z^{\epsilon}_{b\delta_{\epsilon}}\left(x^{\epsilon}_{t_{\epsilon}(b)}\right)&=&
\Phi_{b\delta_{\epsilon}}x^{\epsilon}_{t_{\epsilon}(b)}+\sqrt{\epsilon} \Phi_{b\delta_{\epsilon}}\int\limits_{0}^{b\delta_{\epsilon}}{\frac{1}{\Phi(s)}d{\left(W_{t_{\epsilon}(b)+s}-W_{t_{\epsilon}(b)}\right)}},
\end{eqnarray*}

\begin{eqnarray*}
z^{\epsilon}_{b\delta_{\epsilon}}\left(z^{\epsilon}_{t_{\epsilon}(b)}\right)&=&
\Phi_{b\delta_{\epsilon}}z^{\epsilon}_{t_{\epsilon}(b)}+\sqrt{\epsilon} \Phi_{b\delta_{\epsilon}}\int\limits_{0}^{b\delta_{\epsilon}}{\frac{1}{\Phi(s)}d{\left(W_{t_{\epsilon}(b)+s}-W_{t_{\epsilon}(b)}\right)}}
\end{eqnarray*}
for every $0<\epsilon<\epsilon_b$,
where $\Phi=\{\Phi_{t}\}_{t\geq 0}$ is the fundamental solution of the non-autonomous system
\begin{eqnarray*}
d\Phi_{t}=-V^{\prime\prime}{(\psi_t+{t}_\epsilon(b))}\Phi_t dt
\end{eqnarray*}
for every $t\geq 0$ with initial condition $\Phi_{0}=1$.
Applying Lemma \ref{conditional} with $X:=\Phi_{b\delta_{\epsilon}}x^{\epsilon}_{t_{\epsilon}(b)}$, $Y:=\Phi_{b\delta_{\epsilon}}z^{\epsilon}_{t_{\epsilon}(b)}$ and $Z:=\sqrt{\epsilon} \Phi_{b\delta_{\epsilon}}\int\limits_{0}^{b\delta_{\epsilon}}{\frac{1}{\Phi(s)}d{\left(W_{t_{\epsilon}(b)+s}-W_{t_{\epsilon}(b)}\right)}}$,
$\mathcal{G}=\sigma\left(X,Y\right)$ and $(\Omega,\mathcal{F},\mathbb{P})$ the canonical probability space of the Brownian motion, we obtain
\begin{eqnarray*}
\norm{
z^{\epsilon}_{b\delta_{\epsilon}}\left(x^{\epsilon}_{t_{\epsilon}(b)}\right)-z^{\epsilon}_{b\delta_{\epsilon}}\left(z^{\epsilon}_{t_{\epsilon}(b)}\right)}&\leq &
\frac{1}{\sqrt{{2\pi}{\epsilon}\int\limits_{0}^{b\delta_{\epsilon}}{\left(\frac{1}{\Phi(s)}\right)^2d{{s}}}}}\mathbb{E}{\left[\left|x^{\epsilon}_{t_{\epsilon}(b)}-z^{\epsilon}_{t_{\epsilon}(b)}\right|\right]}.
\end{eqnarray*}
Using Proposition \ref{order}, we obtain
\begin{eqnarray*}
\norm{
z^{\epsilon}_{b\delta_{\epsilon}}\left(x^{\epsilon}_{t_{\epsilon}(b)}\right)-z^{\epsilon}_{b\delta_{\epsilon}}\left(z^{\epsilon}_{t_{\epsilon}(b)}\right)}&\leq &
\sqrt{\frac{{\epsilon}}{{{2\pi}{}\int\limits_{0}^{b\delta_{\epsilon}}{\left(\frac{1}{\Phi(s)}\right)^2d{{s}}}}}} \times \\
&&\kappa_3\left(\kappa_{2}t_{\epsilon}(b)+1\right)^3t_{\epsilon}(b)
\mathbb{E}{\left[B^2_{t_{\epsilon}(b)}\right]}.
\end{eqnarray*}
Using the fact that for each $\epsilon>0$, $\delta_{\epsilon}=\epsilon^{\gamma}$ for some $0<\gamma<1$, $\Phi_0=1$, the Intermediate Value Theorem for integrals and Lemma \ref{hopital} we obtain the result.
\end{proof}

The following proposition will permit us to change the probability measure in a small interval of time in order to compare the total variation distance of the It\^o diffusion and the first order approximation with a random initial condition.

\begin{proposition}[Short Time Change of Measure]\label{shorttime}
Let us assume the same hypothesis of Proposition \ref{lincou}. Then for each $b\in \mathbb{R}$
\begin{eqnarray*}
\lim\limits_{\epsilon\rightarrow 0}{\norm{
x^{\epsilon}_{b\delta_{\epsilon}}\left(x^{\epsilon}_{t_{\epsilon}(b)}\right)-z^{\epsilon}_{b\delta_{\epsilon}}\left(x^{\epsilon}_{t_{\epsilon}(b)}\right)}}&=&0,
\end{eqnarray*}
where $\delta_\epsilon=\epsilon^{\gamma}$ for some $\gamma>0$.
\end{proposition}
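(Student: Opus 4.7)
The plan is to compare the laws of $x^{\epsilon}_{b\delta_{\epsilon}}(X)$ and $z^{\epsilon}_{b\delta_{\epsilon}}(X)$, where $X:=x^{\epsilon}_{t_{\epsilon}(b)}$, by a Girsanov change of measure on the short interval $[0,b\delta_{\epsilon}]$. Both processes start at the common random point $X$ and are driven by the same shifted Brownian motion $W^{*}_{s}:=W_{t_{\epsilon}(b)+s}-W_{t_{\epsilon}(b)}$; they differ only in their drift, the It\^o drift $-V'(x^{\epsilon}_{s}(X))$ being replaced in $z^{\epsilon}$ by its linearization $-V'(\psi_{s}(X))-V''(\psi_{s}(X))(\cdot-\psi_{s}(X))$ around $\psi_{s}(X)$. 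The discrepancy between the two drifts is a second order Taylor remainder of $V'$, which Proposition \ref{order} forces to be small on the time scale $\delta_{\epsilon}=\epsilon^{\gamma}$.

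Concretely, define
\[
h_{s}:=\frac{1}{\sqrt{\epsilon}}\bigl[V'(x^{\epsilon}_{s}(X))-V'(\psi_{s}(X))-V''(\psi_{s}(X))(x^{\epsilon}_{s}(X)-\psi_{s}(X))\bigr],
\]
and let $\mathbb{Q}$ be the probability measure with density $\exp\bigl(\int_{0}^{b\delta_{\epsilon}}h_{s}\,dW^{*}_{s}-\tfrac{1}{2}\int_{0}^{b\delta_{\epsilon}}h_{s}^{2}\,ds\bigr)$ with respect to $\mathbb{P}$, constructed conditionally on $\mathcal{F}_{t_{\epsilon}(b)}$. By Girsanov's theorem, $\tilde{W}_{s}:=W^{*}_{s}-\int_{0}^{s}h_{r}\,dr$ is a $\mathbb{Q}$-Brownian motion, and substitution into the It\^o SDE shows that under $\mathbb{Q}$ the process $x^{\epsilon}$ satisfies exactly the linearized SDE defining $z^{\epsilon}$; pathwise uniqueness then gives $\mathcal{L}_{\mathbb{Q}}(x^{\epsilon}_{b\delta_{\epsilon}}(X))=\mathcal{L}_{\mathbb{P}}(z^{\epsilon}_{b\delta_{\epsilon}}(X))$. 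By the data-processing inequality for the total variation distance and Pinsker's inequality,
\[
\norm{x^{\epsilon}_{b\delta_{\epsilon}}(X)-z^{\epsilon}_{b\delta_{\epsilon}}(X)}\leq \norm{\mathbb{P}-\mathbb{Q}}\leq \sqrt{\tfrac{1}{2}\mathbb{E}_{\mathbb{P}}\!\left[\int_{0}^{b\delta_{\epsilon}}h_{s}^{2}\,ds\right]}.
\]

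To bound the right-hand side, Taylor's theorem with Lagrange remainder yields $|\sqrt{\epsilon}\,h_{s}|\leq \tfrac{\kappa_{3}}{2}(x^{\epsilon}_{s}(X)-\psi_{s}(X))^{2}$, and the zeroth order estimate of Proposition \ref{order} applied pathwise to $x^{\epsilon}(X)$ driven by $W^{*}$ gives $|x^{\epsilon}_{s}(X)-\psi_{s}(X)|\leq \sqrt{\epsilon}\,B^{*}_{s}(\kappa_{2}s+1)$, where $B^{*}_{s}:=\sup_{0\leq r\leq s}|W^{*}_{r}|$. Combining these bounds with $\mathbb{E}[(B^{*}_{s})^{4}]=O(s^{2})$ (from Doob's $L^{4}$ inequality) yields
\[
\mathbb{E}_{\mathbb{P}}\!\left[\int_{0}^{b\delta_{\epsilon}}h_{s}^{2}\,ds\right]\leq \frac{\kappa_{3}^{2}}{4}\epsilon(\kappa_{2}b\delta_{\epsilon}+1)^{4}\int_{0}^{b\delta_{\epsilon}}\mathbb{E}[(B^{*}_{s})^{4}]\,ds=O(\epsilon\,\delta_{\epsilon}^{3})=O(\epsilon^{1+3\gamma}),
\]
which vanishes for every $\gamma>0$. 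The main obstacle I foresee is ensuring that the Dol\'eans--Dade exponential is a true $\mathbb{P}$-martingale, so that $\mathbb{Q}$ is a bona fide probability measure; this should follow from Novikov's condition or a standard localization argument, since $h_{s}$ is pointwise of order $\sqrt{\epsilon}(B^{*}_{s})^{2}$ and hence uniformly small on the short interval by the assumed smoothness of $V$. A secondary technical point is that $X$ is $\mathcal{F}_{t_{\epsilon}(b)}$-measurable: the Girsanov construction must be performed conditionally on $\mathcal{F}_{t_{\epsilon}(b)}$ and then averaged over the law of $X$, which is harmless because the pathwise estimate on $h_{s}$ is uniform in $X$.
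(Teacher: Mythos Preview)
Your argument is correct and is in fact a streamlined version of the paper's own Girsanov proof. Both arguments compare the two diffusions on the short window $[0,b\delta_{\epsilon}]$ via a change of measure and then bound the total variation by the relative entropy (Pinsker), but they organize the change of measure differently.

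The paper removes the \emph{full} drift of each process separately, defining $\gamma^{\epsilon}_s=V'(x^{\epsilon}_s)/\sqrt{\epsilon}$ and $\Gamma^{\epsilon}_s=[V'(\psi_s)+V''(\psi_s)(z^{\epsilon}_s-\psi_s)]/\sqrt{\epsilon}$, builds two measures $\mathbb{P}^1,\mathbb{P}^2$ under which $x^{\epsilon}$ and $z^{\epsilon}$ become the same scaled Brownian motion, and then bounds $\mathbb{E}_{\mathbb{P}^1}\!\int(\gamma^{\epsilon}-\Gamma^{\epsilon})^2$. Because this expectation is under $\mathbb{P}^1$ rather than $\mathbb{P}$, the paper needs two rounds of Cauchy--Schwarz and must control $\mathbb{E}_{\mathbb{P}}[\exp(\rho\int(\gamma^{\epsilon})^2)]$. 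Moreover, $\gamma^{\epsilon}-\Gamma^{\epsilon}$ contains not only your Taylor remainder $h_s$ but also the extra term $\epsilon^{-1/2}V''(\psi_s)(x^{\epsilon}_s-z^{\epsilon}_s)$, so the paper has to invoke \emph{both} the zeroth and first order estimates of Proposition~\ref{order}. Your approach changes measure only once, and since you chose the direction $H(\mathbb{P}\mid\mathbb{Q})$ the entropy is $\tfrac12\,\mathbb{E}_{\mathbb{P}}\!\int h_s^2\,ds$ directly under $\mathbb{P}$, with $h_s$ a pure second-order Taylor remainder; only item~$i)$ of Proposition~\ref{order} is needed, and no Cauchy--Schwarz trick is required. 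This is genuinely cleaner.

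The one place where the paper's longer route pays off is the martingale property of the Dol\'eans--Dade exponential. In the paper, $\int(\gamma^{\epsilon})^2$ and $\int(\Gamma^{\epsilon})^2$ are bounded by a small constant times $\sup_t B_t^{\,2}$, and Gaussian variables have finite $\exp(\lambda(\cdot)^2)$ for small $\lambda$, so Novikov goes through. In your setup $\int h_s^2\,ds$ is of order $\epsilon\,\delta_{\epsilon}^{3}(B^*_1)^4$ after scaling, and $\exp(\lambda(B^*_1)^4)$ is \emph{never} integrable, so Novikov fails no matter how small $\epsilon$ is. Your fallback to ``a standard localization argument'' is the right fix: since $V''$ is bounded the linearized SDE has linear growth and a unique non-explosive strong solution, and the same holds for $x^{\epsilon}$ by coercivity, so general Girsanov theory (e.g.\ Liptser--Shiryaev or the Bene\v{s} criterion) gives that the exponential is a true martingale without Novikov. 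Just be aware that the Novikov route you mention first does not actually work here.
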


\begin{proof}
We will use Cameron-Martin-Girsanov Theorem and Novikov's Theorem. For the precise statements of these theorems we use here, see \cite{FI1} and \cite{KU}.
Let $\epsilon>0$, $t\geq 0$  and $b\in \R$ be fixed. Let us define $\gamma^{\epsilon}_{t}:=\frac{V^{\prime}\left(x^{\epsilon}_t\right)}{\sqrt{\epsilon}}$ and
$\Gamma^{\epsilon}_{t}:=\frac{\left(V^{\prime}\left(\psi_t\right)-V^{\prime\prime}(\psi_t)\psi_t+V^{\prime\prime}(\psi_t)z^{\epsilon}_t\right)}{\sqrt{\epsilon}}$.
Then, for every $\epsilon>0$ and $t>0$ it follows that
\begin{eqnarray*}
\left(\gamma^{\epsilon}_{t}\right)^2
&\leq &2\kappa^2_2\frac{\left(x^{\epsilon}_t-\psi_t\right)^2}{{\epsilon}}+
2\kappa^2_2\frac{\left(\psi_t\right)^2}{{\epsilon}}\\
&\leq & 4\kappa^2_2 B^2_t\left(\kappa_2 t^2+1\right)+
2\kappa^2_2\frac{\left(\psi_t\right)^2}{{\epsilon}}\\
\end{eqnarray*}
and
\begin{eqnarray*}
\left(\Gamma^{\epsilon}_{t}\right)^2
&\leq &2\kappa^2_2\left(y_t\right)^2+
2\kappa^2_2\frac{\left(\psi_t\right)^2}{{\epsilon}}\\
&\leq & 4\kappa^2_2 B^2_t\left(\kappa_2 t^2+1\right)+
2\kappa^2_2\frac{\left(\psi_t\right)^2}{{\epsilon}}.\\
\end{eqnarray*}
Let us define $I^{\epsilon}(b):=\left[t_{\epsilon}(b),t_{\epsilon}(b)+b\delta_{\epsilon}\right]$. Then, for every $\epsilon>0$ it follows that
\begin{eqnarray*}
\int\limits_{I(\epsilon)}\left(\gamma^{\epsilon}_{t}\right)^2dt
&\leq & 4b\kappa^2_2 \delta_{\epsilon}\left(\kappa_2 \left(t_{\epsilon}(b)+b\delta_{\epsilon}\right)^2+1\right)\sup\limits_{t\in I^{\epsilon}(b)}{B^2_t}+
2b\kappa^2_2 \delta_{\epsilon}\frac{\sup\limits_{t\in I^{\epsilon}(b)}\left(\psi_t\right)^2}{{\epsilon}}.\\
\end{eqnarray*}
and
\begin{eqnarray*}
\int\limits_{I(\epsilon)}\left(\Gamma^{\epsilon}_{t}\right)^2dt
&\leq & 4b\kappa^2_2 \delta_{\epsilon}\left(\kappa_2 \left(t_{\epsilon}(b)+b\delta_{\epsilon}\right)^2+1\right)\sup\limits_{t\in I^{\epsilon}(b)}{B^2_t}+
2b\kappa^2_2 \delta_{\epsilon}\frac{\sup\limits_{t\in I^{\epsilon}(b)}\left(\psi_t\right)^2}{{\epsilon}}.\\
\end{eqnarray*}
Using Lemma \ref{porque}, there exists a constant $c>0$ such that
\begin{eqnarray*}
\int\limits_{I(\epsilon)}\left(\gamma^{\epsilon}_{t}\right)^2dt
&\leq & 4b\kappa^2_2 \delta_{\epsilon}\left(\kappa_2 \left(t_{\epsilon}(b)+b\delta_{\epsilon}\right)^2+1\right)\sup\limits_{t\in I^{\epsilon}(b)}{B^2_t}+
2bc\kappa^2_2 \delta_{\epsilon}
\end{eqnarray*}
and
\begin{eqnarray*}
\int\limits_{I(\epsilon)}\left(\Gamma^{\epsilon}_{t}\right)^2dt
&\leq & 4b\kappa^2_2 \delta_{\epsilon}\left(\kappa_2 \left(t_{\epsilon}(b)+b\delta_{\epsilon}\right)^2+1\right)\sup\limits_{t\in I^{\epsilon}(b)}{B^2_t}+
2bc\kappa^2_2 \delta_{\epsilon}
\end{eqnarray*}
for $\epsilon>0$ small enough. Consequently, for any constant $\rho>0$ it follows that
\begin{eqnarray*}
\mathbb{E}\left\{\exp\left[\rho\int\limits_{t_{\epsilon}(b)}^{t_{\epsilon}(b)+b\delta_{\epsilon}}{\left(\gamma^{\epsilon}_s\right)^2 ds}\right]\right\}<+\infty
\end{eqnarray*}
and
\begin{eqnarray*}
\mathbb{E}\left\{\exp\left[\rho\int\limits_{t_{\epsilon}(b)}^{t_{\epsilon}(b)+b\delta_{\epsilon}}{\left(\Gamma^{\epsilon}_s\right)^2 ds}\right]\right\}<+\infty
\end{eqnarray*}
for $\epsilon>0$ small enough.
From Novikov's Theorem it follows that

\begin{eqnarray*}
\frac{d\mathbb{P}^1_{t_{\epsilon}(b)+b\delta_{\epsilon}}}{d\mathbb{P}_{t_{\epsilon}(b)+b\delta_{\epsilon}}}&:=&
\exp\left\{\int\limits_{t_{\epsilon}(b)}^{t_{\epsilon}(b)+b\delta_{\epsilon}}
{\gamma^{\epsilon}_s dW_s}-\frac{1}{2}
\int\limits_{t_{\epsilon}(b)}^{t_{\epsilon}(b)+b\delta_{\epsilon}}
{\left(\gamma^{\epsilon}_s\right)^2 ds}
\right\},\\
\frac{d\mathbb{P}^2_{t_{\epsilon}(b)+b\delta_{\epsilon}}}{d\mathbb{P}_{t_{\epsilon}(b)+b\delta_{\epsilon}}}&:=&
\exp\left\{\int\limits_{t_{\epsilon}(b)}^{t_{\epsilon}(b)+b\delta_{\epsilon}}
{\Gamma^{\epsilon}_s dW_s}-\frac{1}{2}
\int\limits_{t_{\epsilon}(b)}^{t_{\epsilon}(b)+b\delta_{\epsilon}}
{\left(\Gamma^{\epsilon}_s\right)^2 ds}
\right\},
\end{eqnarray*}
are well defined and they define true probability measures
$\mathbb{P}^{i}_{t_{\epsilon}(b)+b\delta_{\epsilon}}$, $i\in\{1,2\}$.
From now on and up to the end of this proof we will use the notation
$\mathbb{P}^{i}:=\mathbb{P}^{i}_{t_{\epsilon}(b)+b\delta_{\epsilon}}$, $i\in\{1,2\}$ and
$\mathbb{P}:=\mathbb{P}_{t_{\epsilon}(b)+b\delta_{\epsilon}}$.
Under the probability measure $\mathbb{P}^1$,
$W^1_t:=W_t-\int\limits_{t_{\epsilon}(b)}^{t}{\gamma^{\epsilon}_s ds}$ is a Brownian motion on the time interval $t_{\epsilon}(b) \leq t\leq t_{\epsilon}(b)+b\delta_{\epsilon}$.
Also, under the probability measure $\mathbb{P}^2$,
$W^2_t:=W_t-\int\limits_{t_{\epsilon}(b)}^{t}{\Gamma^{\epsilon}_s ds}$ is a Brownian motion on the time interval $t_{\epsilon}(b) \leq t\leq t_{\epsilon}(b)+b\delta_{\epsilon}$.
Consequently,
\begin{eqnarray*}
\frac{d\mathbb{P}^1}{d\mathbb{P}^2}&=&
\frac{\exp\left\{\int\limits_{t_{\epsilon}(b)}^{t_{\epsilon}(b)+b\delta_{\epsilon}}
{\gamma^{\epsilon}_s dW_s}-\frac{1}{2}
\int\limits_{t_{\epsilon}(b)}^{t_{\epsilon}(b)+b\delta_{\epsilon}}
{\left(\gamma^{\epsilon}_s\right)^2 ds}
\right\}}
{\exp\left\{\int\limits_{t_{\epsilon}(b)}^{t_{\epsilon}(b)+b\delta_{\epsilon}}
{\Gamma^{\epsilon}_s dW_s}-\frac{1}{2}
\int\limits_{t_{\epsilon}(b)}^{t_{\epsilon}(b)+b\delta_{\epsilon}}
{\left(\Gamma^{\epsilon}_s\right)^2 ds}
\right\}}\\
&=&\exp\left\{\int\limits_{t_{\epsilon}(b)}^{t_{\epsilon}(b)+b\delta_{\epsilon}}
{\left(\gamma^{\epsilon}_s-\Gamma^{\epsilon}_s\right) dW_s}-\frac{1}{2}
\int\limits_{t_{\epsilon}(b)}^{t_{\epsilon}(b)+b\delta_{\epsilon}}
{\left(\left(\gamma^{\epsilon}_s\right)^2-\left(\Gamma^{\epsilon}_s\right)^2\right) ds}
\right\}\\
&=&\exp\left\{\int\limits_{t_{\epsilon}(b)}^{t_{\epsilon}(b)+b\delta_{\epsilon}}
{\left(\gamma^{\epsilon}_s-\Gamma^{\epsilon}_s\right) dW^1_s}+\frac{1}{2}
\int\limits_{t_{\epsilon}(b)}^{t_{\epsilon}(b)+b\delta_{\epsilon}}
{\left(\Gamma^{\epsilon}_s-\gamma^{\epsilon}_s\right)^2 ds}
\right\}.
\end{eqnarray*}
By Pinsker's inequality and the mean-zero martingale property of the stochastic integral, we have for every
$t_{\epsilon}(b)\leq t\leq t_{\epsilon}(b)+b\delta_{\epsilon}$
\begin{eqnarray*}
{\norm{
x^{\epsilon}_{b\delta_{\epsilon}}\left(x^{\epsilon}_{t_{\epsilon}(b)}\right)-z^{\epsilon}_{b\delta_{\epsilon}}\left(x^{\epsilon}_{t_{\epsilon}(b)}\right)}}
&\leq &
\mathbb{E}_{\mathbb{P}^1}\left[\int\limits_{t_{\epsilon}(b)}^{t_{\epsilon}(b)+b\delta_{\epsilon}}
{\left(\Gamma^{\epsilon}_s-\gamma^{\epsilon}_s\right)^2 ds}\right]\\
&= & \mathbb{E}_{\mathbb{P}}\left[\frac{d\mathbb{P}^1}{d\mathbb{P}}\int\limits_{t_{\epsilon}(b)}^{t_{\epsilon}(b)+b\delta_{\epsilon}}
{\left(\Gamma^{\epsilon}_s-\gamma^{\epsilon}_s\right)^2 ds}\right].
\end{eqnarray*}
By Cauchy-Schwarz's inequality  and the mean-one Dol\'eans exponential martingale
property, we have
{\small{
\begin{eqnarray*}
\mathbb{E}_{\mathbb{P}}\left[\frac{d\mathbb{P}^1}{d\mathbb{P}}\int\limits_{t_{\epsilon}(b)}^{t_{\epsilon}(b)+b\delta_{\epsilon}}{\left(\Gamma^{\epsilon}_s-\gamma^{\epsilon}_s\right)^2 ds}\right] &\leq &
\sqrt{
\mathbb{E}_{\mathbb{P}}\left[\exp\left\{\int\limits_{t_{\epsilon}(b)}^{t_{\epsilon}(b)+b\delta_{\epsilon}}{\left(\gamma^{\epsilon}_s\right)^2ds}\right\}
\left(\int\limits_{t_{\epsilon}(b)}^{t_{\epsilon}(b)+b\delta_{\epsilon}}
{\left(\Gamma^{\epsilon}_s-\gamma^{\epsilon}_s\right)^2 ds}\right)^2\right]
}\\
&\leq &
\sqrt{
\mathbb{E}_{\mathbb{P}}
\left[\exp\left\{2\int\limits_{t_{\epsilon}(b)}^{t_{\epsilon}(b)+b\delta_{\epsilon}}{\left(\gamma^{\epsilon}_s\right)^2ds}\right\}\right]}\times\\
&&
\sqrt{
\mathbb{E}_{\mathbb{P}}
\left[\left(\int\limits_{t_{\epsilon}(b)}^{t_{\epsilon}(b)+b\delta_{\epsilon}}
{\left(\Gamma^{\epsilon}_s-\gamma^{\epsilon}_s\right)^2 ds}\right)^4\right]}
\end{eqnarray*}
}}It follows for $\epsilon>0$ small enough that
\begin{eqnarray*}
\exp\left\{\int\limits_{t_{\epsilon}(b)}^{t_{\epsilon}(b)+b\delta_{\epsilon}}{\left(\gamma^{\epsilon}_s\right)^2ds}\right\}&\leq & \exp\left\{
4b\kappa^2_2 \delta_{\epsilon}\left(\kappa_2 \left(t_{\epsilon}(b)+b\delta_{\epsilon}\right)^2+1\right)\sup\limits_{t\in I^{\epsilon}(b)}{B^2_t}+
2bc\kappa^2_2 \delta_{\epsilon}
\right\},
\end{eqnarray*}
where the last expression is $\mathbb{P}$-integrable for $\epsilon>0$ small enough.
Using the scaling property of Brownian motion and the distribution of the maximum of the Brownian motion in a compact interval, the last inequality implies that
\begin{eqnarray*}
\lim\limits_{\epsilon\rightarrow 0}
\mathbb{E}_{\mathbb{P}}
\left[
\exp\left\{\rho\int\limits_{t_{\epsilon}(b)}^{t_{\epsilon}(b)+b\delta_{\epsilon}}{\left(\gamma^{\epsilon}_s\right)^2ds}\right\}\right]&=&1.
\end{eqnarray*}
for any constant $\rho>0$.
Also, it is true that
\begin{eqnarray*}
\left(\int\limits_{t_{\epsilon}(b)}^{t_{\epsilon}(b)+b\delta_{\epsilon}}
{\left(\Gamma^{\epsilon}_s-\gamma^{\epsilon}_s\right)^2 ds}\right)^4 &\leq & \left(b\delta_{\epsilon}\sup\limits_{s\in I^{\epsilon}(b)}{\left(\Gamma^{\epsilon}_s-\gamma^{\epsilon}_s\right)^2}\right)^4\\
&\leq & Cb^4\delta^4_{\epsilon}\left(\sup\limits_{s\in I^{\epsilon}(b)}{\frac{\left(x^{\epsilon}_s-\psi_s\right)^{16}}{{\epsilon^4}}}
+\sup\limits_{s\in I^{\epsilon}(b)}{\frac{\left|x^{\epsilon}_s-\psi_s-\sqrt{\epsilon}y_t\right|^8}{{\epsilon^4}}}
\right),
\end{eqnarray*}
where $C=C(\kappa_2,\kappa_3)>0$ is a constant.
Using the last inequality and Proposition \ref{order}, we obtain that
\begin{eqnarray*}
\lim\limits_{\epsilon\rightarrow 0}\mathbb{E}_{\mathbb{P}}
\left[\left(\int\limits_{t_{\epsilon}(b)}^{t_{\epsilon}(b)+b\delta_{\epsilon}}
{\left(\Gamma^{\epsilon}_s-\gamma^{\epsilon}_s\right)^2 ds}\right)^4\right]&=&0.
\end{eqnarray*}
Consequently,
\begin{eqnarray*}
\lim\limits_{\epsilon\rightarrow 0}{\norm{
x^{\epsilon}_{b\delta_{\epsilon}}\left(x^{\epsilon}_{t_{\epsilon}(b)}\right)-z^{\epsilon}_{b\delta_{\epsilon}}\left(x^{\epsilon}_{t_{\epsilon}(b)}\right)}}&=&0.
\end{eqnarray*}
\end{proof}
Now we have all the tools in order to prove our result for the class of smooth coercive regular potentials.
\begin{theorem}[Smooth Coercive Regular Potentials]\label{mainly}
Let $V$ a smooth coercive regular potential.
Let us consider the family $x^{\epsilon}=\{ {x^{\epsilon}_t} \}_{t \geq 0}$
given by the the semiflow of the following stochastic differential equation,
\begin{eqnarray*}
d{x^{\epsilon}_t}&=&-V^{\prime}({x^{\epsilon}_t})dt+\sqrt{\epsilon}dW_t,\\
{x^{\epsilon}_0}&=&x_0
\end{eqnarray*}
for $t \geq 0$, where $x_0$ is a deterministic initial condition in $\mathbb{R}\setminus \{0\}$ and
$\{W_t\}_{t \geq 0}$ is a standard Brownian motion. This family presents profile cut-off  in the sense of
the Definition \ref{profile} with respect to the total variation distance when $\epsilon$ goes to zero.
The profile function $G:\mathbb{R}\rightarrow \mathbb{R}$ is given by
\begin{eqnarray*}
G(b)&:=&\norm{\N{\left(\tilde{c} e^{-b},1\right)}-\N{\left(0,1\right)}},
\end{eqnarray*}
where $\tilde{c}$ is the nonzero constant given by
\begin{eqnarray*}
\lim\limits_{t\rightarrow +\infty}{e^{V^{\prime\prime}{(0)}t}\psi_t}&=&:\tilde{c}.
\end{eqnarray*}
and the cut-off time $t_{\epsilon}$ and window time $w_{\epsilon}$ are given by
\begin{eqnarray*}
t_{\epsilon}&:=&\frac{1}{2V^{\prime\prime}(0)}\left(\ln\left(\frac{1}{\epsilon}\right)+\ln\left(2V^{\prime\prime}(0)\right) \right),\\
w_{\epsilon}&:=&\frac{1}{V^{\prime\prime}(0)}+\delta_{\epsilon},
\end{eqnarray*}
where $\delta_\epsilon=\epsilon^{\gamma}$ for some $\gamma\in ]0,1[$.
\end{theorem}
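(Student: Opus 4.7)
The plan is to bridge the It\^o diffusion $x^\epsilon$ to the first order approximation $z^\epsilon$ (for which Corollary \ref{first} already gives the desired profile cut-off), while absorbing the extra $\delta_\epsilon$ in the window thanks to Remark \ref{short}. Fix $b\in\R$ and write $T_\epsilon(b):=t_\epsilon+b/V^{\prime\prime}(0)$, so that the time of interest splits as $t_\epsilon+bw_\epsilon=T_\epsilon(b)+b\delta_\epsilon$. By the strong Markov property, the law of $x^\epsilon_{t_\epsilon+bw_\epsilon}$ agrees with that of $x^\epsilon_{b\delta_\epsilon}(x^\epsilon_{T_\epsilon(b)})$, namely the diffusion restarted at the random point $x^\epsilon_{T_\epsilon(b)}$ and run for time $b\delta_\epsilon$ with the shifted noise $W_{\cdot+T_\epsilon(b)}$.

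I then insert intermediate laws and apply the triangle inequality to obtain both
$$d_\epsilon(t_\epsilon+bw_\epsilon)\le A_\epsilon+B_\epsilon+C_\epsilon+D_\epsilon$$
and the matching reverse inequality $d_\epsilon(t_\epsilon+bw_\epsilon)\ge C_\epsilon-A_\epsilon-B_\epsilon-D_\epsilon$, where
$A_\epsilon:=\norm{x^\epsilon_{b\delta_\epsilon}(x^\epsilon_{T_\epsilon(b)})-z^\epsilon_{b\delta_\epsilon}(x^\epsilon_{T_\epsilon(b)})}$,
$B_\epsilon:=\norm{z^\epsilon_{b\delta_\epsilon}(x^\epsilon_{T_\epsilon(b)})-z^\epsilon_{b\delta_\epsilon}(z^\epsilon_{T_\epsilon(b)})}$,
$C_\epsilon:=\norm{z^\epsilon_{b\delta_\epsilon}(z^\epsilon_{T_\epsilon(b)})-\N^\epsilon}$, and
$D_\epsilon:=\norm{\N^\epsilon-\mu^\epsilon}$.
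Proposition \ref{shorttime} yields $A_\epsilon\to 0$, Proposition \ref{lincou} yields $B_\epsilon\to 0$, and Lemma \ref{invariantes} yields $D_\epsilon\to 0$. The remaining term $C_\epsilon\to G(b)$ is essentially the content of Corollary \ref{first}: since $z^\epsilon_{T_\epsilon(b)}$ lies $O(\sqrt{\epsilon})$-close to $\psi_{T_\epsilon(b)}$ by Proposition \ref{order}, the deterministic semiflow $\psi_s(z^\epsilon_{T_\epsilon(b)})$ on the vanishing interval $[0,b\delta_\epsilon]$ differs negligibly from $\psi_{s+T_\epsilon(b)}$, so the asymptotics of the fundamental solution $\Phi$ in Lemma \ref{asin} apply and Remark \ref{short} absorbs the $\delta_\epsilon$ perturbation of the window without changing the profile. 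Sandwiching then gives $d_\epsilon(t_\epsilon+bw_\epsilon)\to G(b)$, and $G(b)\to 1$ as $b\to-\infty$, $G(b)\to 0$ as $b\to+\infty$ follow as in the proof of Theorem \ref{toy} via Lemma \ref{lemma2}.

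The hard work has already been invested in the three auxiliary propositions, so the present theorem reduces to the four-term triangle inequality above. The most delicate input is Proposition \ref{shorttime}, where Girsanov's theorem is applied to compare the drifts of $x^\epsilon$ and $z^\epsilon$ on the short window $[T_\epsilon(b),T_\epsilon(b)+b\delta_\epsilon]$ of length $\delta_\epsilon=\epsilon^\gamma$; the combination of this vanishingly small interval with the pathwise estimates of Proposition \ref{order} is precisely what makes both the drift difference $\Gamma^\epsilon-\gamma^\epsilon$ and the Novikov exponential tractable, and motivates the restriction $\gamma\in\,]0,1[$ in the definition of $w_\epsilon$.
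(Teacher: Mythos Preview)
Your proposal is correct and follows essentially the same route as the paper: the identical four-term triangle inequality with $A_\epsilon,B_\epsilon,D_\epsilon\to 0$ handled by Proposition~\ref{shorttime}, Proposition~\ref{lincou}, and Lemma~\ref{invariantes} respectively, and $C_\epsilon\to G(b)$ coming from Corollary~\ref{first} together with Remark~\ref{short}. The only minor difference is that the paper treats $C_\epsilon$ more directly, simply identifying $z^\epsilon_{b\delta_\epsilon}(z^\epsilon_{t_\epsilon(b)})$ with $z^\epsilon_{t^*_\epsilon(b)}$ and invoking (\ref{cutlin}), whereas you sketch an extra justification via closeness of the restarted flow; also note that your citation of Proposition~\ref{order} for $z^\epsilon_{T_\epsilon(b)}-\psi_{T_\epsilon(b)}=\sqrt{\epsilon}\,y_{T_\epsilon(b)}$ is unnecessary since this holds by definition of $z^\epsilon$.
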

\begin{proof}
Let $\epsilon>0$ and $t>0$ be fixed. We define
\begin{eqnarray*}
D^{\epsilon}{(t)}:=\norm{\x-\mu^{\epsilon}}
\end{eqnarray*}
and
\begin{eqnarray*}
d^{\epsilon}{(t)}:=\norm{z^{\epsilon}_t-\N^{\epsilon}},
\end{eqnarray*}
where $\mu^{\epsilon}$ and $\N^{\epsilon}$ are given by  Lemma
\ref{density} and Lemma \ref{invariantes}. For each $b\in \R$, take $\epsilon_b>0$ such that
${t}^*_{\epsilon}(b):=t_{\epsilon}+b(w_{\epsilon}+\delta_{\epsilon})={t}_{\epsilon}(b)+
b\delta_{\epsilon}> 0$ for every $0<\epsilon<\epsilon_b$.
By Corollary \ref{first} and Remark \ref{short} we know that for each $b \in \R$
\begin{eqnarray}\label{cutlin}
\lim\limits_{\epsilon\rightarrow 0}{d^{\epsilon}\left({t}^*_{\epsilon}(b)\right)}&=&G(b).
\end{eqnarray}
By definition,
\begin{eqnarray*}
D^{\epsilon}{({t}^*_{\epsilon}(b))}&=&\norm{x^{\epsilon}_{{t}^*_{\epsilon}(b)}-\mu^{\epsilon}}\\
&\leq&{\norm{
x^{\epsilon}_{b\delta_{\epsilon}}\left(x^{\epsilon}_{t_{\epsilon}(b)}\right)-z^{\epsilon}_{b\delta_{\epsilon}}\left(x^{\epsilon}_{t_{\epsilon}(b)}\right)}}+
\norm{
z^{\epsilon}_{b\delta_{\epsilon}}\left(x^{\epsilon}_{t_{\epsilon}(b)}\right)-z^{\epsilon}_{b\delta_{\epsilon}}\left(z^{\epsilon}_{t_{\epsilon}(b)}\right)}+\\
&&\norm{z^{\epsilon}_{{t}^*_{\epsilon}(b)}-\N^{\epsilon}}+\norm{\N^{\epsilon}-\mu^{\epsilon}}.
\end{eqnarray*}
Using Proposition \ref{lincou}, Proposition \ref{shorttime}, Lemma \ref{density}, the relation (\ref{cutlin}) and the item $i)$ of Lemma \ref{supinf}, we have $\limsup\limits_{\epsilon\rightarrow 0}{D^{\epsilon}{({t}^*_{\epsilon}(b))}}\leq G(b)$. In order to obtain the converse inequality, we observe that
\begin{eqnarray*}
d^{\epsilon}{({t}^*_{\epsilon}(b))}&=&\norm{z^{\epsilon}_{{t}^*_{\epsilon}(b)}-\N^{\epsilon}}\\
&\leq &\norm{
z^{\epsilon}_{b\delta_{\epsilon}}\left(z^{\epsilon}_{t_{\epsilon}(b)}\right)-z^{\epsilon}_{b\delta_{\epsilon}}\left(x^{\epsilon}_{t_{\epsilon}(b)}\right)}+
{\norm{
z^{\epsilon}_{b\delta_{\epsilon}}\left(x^{\epsilon}_{t_{\epsilon}(b)}\right)-x^{\epsilon}_{b\delta_{\epsilon}}\left(x^{\epsilon}_{t_{\epsilon}(b)}\right)}}+\\
&&\norm{x^{\epsilon}_{{t}^*_{\epsilon}(b)}-\mu^{\epsilon}}+\norm{\mu^{\epsilon}-\N^{\epsilon}}.
\end{eqnarray*}
Again, using Proposition \ref{lincou}, Proposition \ref{shorttime}, Lemma \ref{density}, the relation (\ref{cutlin}) and the item $ii)$ of Lemma \ref{supinf} we have $\liminf\limits_{\epsilon\rightarrow0}{D^{\epsilon}{({t}^*_{\epsilon}(b))}}\geq G(b)$.
Consequently,
\begin{eqnarray*}
\lim\limits_{\epsilon\rightarrow0}{D^{\epsilon}{({t}^*_{\epsilon}(b))}}=G(b).
\end{eqnarray*}
\end{proof}

The following proposition will permit us to approximate a coercive regular potential by a smooth coercive regular potential.

\begin{proposition}[Removing Boundedness for $V^{\prime\prime}$ and $V^{\prime\prime\prime}$]\label{baprox}
Let us assume that $V$ is a coercive regular potential.
For every $M\in ]0,+\infty[$, there exists
 a smooth coercive regular potential $V_M(x)$ which is an approximation of $V$ in the following way:
$V_M(x)=V(x)$ for every $|x|\leq \sqrt{2} M$.
\end{proposition}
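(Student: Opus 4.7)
The plan is to construct $V_M$ by prescribing its second derivative: modify $V^{\prime\prime}$ outside the interval $[-\sqrt{2}M,\sqrt{2}M]$ via a smooth cut-off that interpolates between $V^{\prime\prime}$ and the uniform lower bound $\delta>0$, and then recover $V_M$ by integrating twice starting from the origin. This avoids any issue with trying to match higher-order Taylor data of $V$ at the boundary of the matching region, and it guarantees directly the structural properties required from $V_M$.

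Concretely, I would fix a $\mathcal{C}^{\infty}$ cut-off function $\chi:\R\to[0,1]$ with $\chi\equiv 1$ on $[-\sqrt{2}M,\sqrt{2}M]$ and $\chi\equiv 0$ on $\R\setminus[-2M,2M]$, and then set
\begin{eqnarray*}
g_M(x):=\chi(x)V^{\prime\prime}(x)+(1-\chi(x))\delta,\qquad V_M(x):=\int_{0}^{x}\int_{0}^{u}g_M(s)\,ds\,du.
\end{eqnarray*}
Since $V\in\mathcal{C}^{3}$, $\chi\in\mathcal{C}^{\infty}$ and $V^{\prime\prime}\geq\delta$, the function $g_M$ is $\mathcal{C}^{1}$ with $g_M\geq\delta$; it coincides with $V^{\prime\prime}$ on $[-\sqrt{2}M,\sqrt{2}M]$ and equals $\delta$ for $|x|\geq 2M$, so both $g_M$ and $g_M^{\prime}$ are bounded on $\R$. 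Consequently $V_M\in\mathcal{C}^{3}$ with $V_M(0)=0$, $V_M^{\prime\prime}=g_M$ uniformly bounded below by $\delta$ and uniformly bounded above, and $V_M^{\prime\prime\prime}=g_M^{\prime}$ bounded.

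To close the verification that $V_M$ is a smooth coercive regular potential, observe that $V_M^{\prime\prime}\geq\delta$ forces $V_M^{\prime}$ to be strictly increasing with $V_M^{\prime}(0)=0$, hence $V_M^{\prime}(x)=0$ iff $x=0$, and also $V_M(x)\geq\tfrac{\delta}{2}x^{2}\to+\infty$ as $|x|\to+\infty$. For $|x|\leq\sqrt{2}M$ we have $g_M(x)=V^{\prime\prime}(x)$, so $V_M^{\prime}(x)=\int_{0}^{x}V^{\prime\prime}(s)\,ds=V^{\prime}(x)$ and $V_M(x)=V(x)$ on that interval, which is the matching property required by the statement.

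There is no essential obstacle in the argument; the only point demanding a little care is preserving the pointwise bound $V_M^{\prime\prime}\geq\delta$ throughout the transition ring $\sqrt{2}M\leq|x|\leq 2M$, where the cut-off is nontrivial. Choosing precisely $\delta$ (rather than, say, some constant depending on boundary values of $V^{\prime\prime}$) as the outer value of the interpolation makes this automatic, since $g_M$ is then a convex combination of two quantities both $\geq\delta$.
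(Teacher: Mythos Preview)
Your proof is correct and follows essentially the same construction as the paper: both prescribe the second derivative as a smooth convex combination of $V^{\prime\prime}$ and the coercivity constant $\delta$, equal to $V^{\prime\prime}$ on $[-\sqrt{2}M,\sqrt{2}M]$ and to $\delta$ outside $[-2M,2M]$, and then integrate twice from the origin. The only cosmetic difference is that the paper writes the cut-off as $g(x^{2}/(4M^{2}))$ with a one-sided bump $g$, whereas you use a two-sided bump $\chi$ directly; your closing remark on why the lower bound $\delta$ is preserved through the transition ring is exactly the point implicit in the paper's choice.
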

\begin{proof}
By coercivity hypothesis there exists $\delta>0$ such that
$V^{\prime\prime}(x)\geq \delta$ for every $x\in \R$.
Let $g \in\mathcal{C}^{\infty}\left(\mathbb{R},[0,1]\right)$ be an increasing function
such that $g(u)=0$ for $u\leq \frac{1}{2}$ and $g(u)=1$ if $u\geq 1$. Let $M\in[1,\infty[$ be a fixed number.
Let $R_M:\mathbb{R}\rightarrow \mathbb{R}$ be a function defined by
\begin{eqnarray*}
R_M(x)&=&g\left(\frac{x^2}{4M^2}\right)\delta+\left(1-g\left(\frac{x^2}{4M^2}\right)\right)V^{\prime\prime}(x).
\end{eqnarray*}
Since $V\in\mathcal{C}^3\left(\R,\R\right)$ and $g \in\mathcal{C}^{\infty}\left(\mathbb{R},[0,1]\right)$,
we have $R_M\in\mathcal{C}^{1}\left(\mathbb{R},\R\right)$.
We also have that $R_M(x)=V^{\prime \prime}(x)$ for every $|x|\leq \sqrt{2}M$, $R_M(x)=\delta$ for every $|x|\geq 2M$,
$R_M(x)\geq \delta$ for every $x\in \mathbb{R}$, $\|R_M\|_{\infty}<+\infty$ and $\|R^{\prime}_M\|_{\infty}<+\infty$.
Let us  define $S_M(x):=\int\limits_{0}^{x}{R_M(y)dy}$ for every $x\in \mathbb{R}$ and let us define
$V_M(x):=\int\limits_{0}^{x}{S_M(y)dy}$.
Then $V_M$ is a smooth $\delta$-coercive regular potential such that
$V_M(x)=V(x)$ for every $|x|\leq \sqrt{2}M$.
\end{proof}

The next proposition will tell us that the approximation of the coercive regular potential by a smooth coercive regular potential also implies an approximation in the total variation distance of the invariant measures associated to the potential $V$ and $V_M$
and the total variation distance for the processes at the ``cut-off time" associated to the potentials $V$ and $V_M$.
\begin{proposition}\label{rb}
Let $V$ be a coercive regular potential and for every $M>0$ let $V_M$ be the approximation of $V$ obtained from Proposition \ref{baprox}.
Let $x^{\epsilon,M}=\left\{x^{\epsilon,M}_{t}\right\}_{t\geq 0}$ be the It\^o diffusion associated to the smooth coercive potential $V_M$
and let $\mu^{\epsilon,M}$ be the invariant probability measure associated to the stochastic process
$x^{\epsilon,M}$ defined in Lemma \ref{density}.
Let us denote by $x^{\epsilon}=\left\{x^{\epsilon}_{t}\right\}_{t\geq 0}$ the It\^o diffusion associated to the coercive potential $V$
and let us denote by $\mu^{\epsilon}$ the invariant probability measure associated to the stochastic process
$x^{\epsilon}$ defined in Lemma \ref{density}.

It follows that
\begin{itemize}
\item[$i)$] For every $M>0$
\begin{eqnarray*}
\lim\limits_{\epsilon\rightarrow 0}{\norm{\mu^{\epsilon}-\mu^{\epsilon,M}}}
&=&
0
\end{eqnarray*}
\item[$ii)$]
Using the same notation as in Theorem \ref{mainly}, for each $b\in \R$, take $\epsilon_b>0$ such that
${t}^*_{\epsilon}(b):=t_{\epsilon}+b(w_{\epsilon}+\delta_{\epsilon})={t}_{\epsilon}(b)+
b\delta_{\epsilon}> 0$ for every $0<\epsilon<\epsilon_b$, where $\delta_\epsilon=\epsilon^\gamma$ for some $\gamma>0$. 
\begin{eqnarray*}
 {\lim\limits_{\epsilon\rightarrow 0}{\norm{x^{\epsilon}_{{t}^*_{\epsilon}(b)}-x^{\epsilon,M}_{{t}^*_{\epsilon}(b)}}}}&=&0
\end{eqnarray*}
for every $M>|x_0|$ and every $b\in \R$.
\end{itemize}
\end{proposition}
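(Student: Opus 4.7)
\emph{Proof proposal.}

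For item $i)$, my plan is to compare both invariant measures to a common Gaussian and then use the triangle inequality. The potential $V_M$ obtained from Proposition \ref{baprox} is itself a $\delta$-coercive regular potential, and since $V_M=V$ on $[-\sqrt{2}M,\sqrt{2}M]$ we have in particular $V_M''(0)=V''(0)$. Hence Lemma \ref{invariantes} applies both to $V$ and to $V_M$ with the \emph{same} limiting Gaussian $\N^{\epsilon}$ of mean $0$ and variance $\epsilon/(2V''(0))$, giving
\begin{eqnarray*}
\lim_{\epsilon\to 0}\norm{\mu^\epsilon-\N^{\epsilon}} \;=\; 0 \;=\; \lim_{\epsilon\to 0}\norm{\mu^{\epsilon,M}-\N^{\epsilon}},
\end{eqnarray*}
and the triangle inequality closes item $i)$.

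For item $ii)$, I would couple $x^{\epsilon}$ and $x^{\epsilon,M}$ via the same Brownian motion $\{W_t\}_{t\geq 0}$ and reduce the estimate to a simple escape-probability bound. Let $\tau^M:=\inf\{t\geq 0:|x^{\epsilon,M}_t|\geq \sqrt{2}M\}$. Inside $[-\sqrt{2}M,\sqrt{2}M]$ the two SDEs share the same drift, because $V=V_M$ there, so pathwise uniqueness for locally Lipschitz drifts forces $x^{\epsilon}_s=x^{\epsilon,M}_s$ almost surely for every $s\in[0,\tau^M]$. Consequently
\begin{eqnarray*}
\norm{x^{\epsilon}_{t^*_\epsilon(b)}-x^{\epsilon,M}_{t^*_\epsilon(b)}} \;\leq\; \mathbb{P}\!\left(\tau^M\leq t^*_\epsilon(b)\right).
\end{eqnarray*}

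It remains to bound the escape probability on the right-hand side, and for this I would apply Proposition \ref{order} $i)$ to the smooth coercive regular potential $V_M$. The key preliminary observation is that the deterministic flow driven by $V_M$ from $x_0$ coincides with $\psi$ for every $t\geq 0$: indeed, $\psi$ contracts monotonically toward the minimum at $0$, so $|\psi_t|\leq |x_0|<M<\sqrt{2}M$ for all $t\geq 0$, and $V_M=V$ on that interval. Proposition \ref{order} $i)$ then yields
\begin{eqnarray*}
\sup_{s\leq t^*_\epsilon(b)}\left|x^{\epsilon,M}_s\right| \;\leq\; |x_0|\;+\;\sqrt{\epsilon}\,B_{t^*_\epsilon(b)}\bigl(\kappa_{2,M}\,t^*_\epsilon(b)+1\bigr),
\end{eqnarray*}
where $\kappa_{2,M}:=\|V_M''\|_\infty<+\infty$. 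Since the hypothesis $M>|x_0|$ gives $\sqrt{2}M-|x_0|>0$, the event $\{\tau^M\leq t^*_\epsilon(b)\}$ is contained in
\begin{eqnarray*}
\Bigl\{\sqrt{\epsilon}\,\bigl(\kappa_{2,M}\,t^*_\epsilon(b)+1\bigr)\,B_{t^*_\epsilon(b)} \;\geq\; \sqrt{2}M-|x_0|\Bigr\},
\end{eqnarray*}
and, because $t^*_\epsilon(b)=O(\ln(1/\epsilon))$ while $B_{t^*_\epsilon(b)}$ has Gaussian tails on the scale $\sqrt{t^*_\epsilon(b)}=O(\sqrt{\ln(1/\epsilon)})$, the threshold of order $\epsilon^{-1/2}/\ln(1/\epsilon)$ dwarfs the typical size of $B_{t^*_\epsilon(b)}$. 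The reflection principle then makes the probability decay faster than any polynomial in $\epsilon$.

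The one genuinely delicate point is the global identification of the deterministic flows of $V$ and $V_M$, which is what lets me invoke Proposition \ref{order} $i)$ directly with $\psi$ on the right-hand side; this relies crucially on the hypothesis $M>|x_0|$ together with the monotone contraction of $\psi$. Every other ingredient (pathwise uniqueness, the triangle inequality, Lemma \ref{invariantes}, and the Gaussian tail bound for the running maximum of Brownian motion) is already available in the excerpt.
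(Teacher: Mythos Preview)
Your argument is correct. Item $i)$ is exactly the paper's proof: both invariant measures are compared to the common Gaussian $\N^\epsilon$ via Lemma \ref{invariantes}, using $V_M''(0)=V''(0)$.

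For item $ii)$ you follow the same overall scheme as the paper (couple the two diffusions with the same Brownian motion and bound the total variation by an escape probability), but you estimate that escape probability differently. The paper defines $z^{\epsilon,M}_s=(x^{\epsilon,M}_s-\psi^M_s)/\sqrt{\epsilon}$, applies It\^o's formula together with the coercivity inequality $V_M''\geq\delta$ to obtain $(z^{\epsilon,M}_t)^2\leq t+\Pi^{\epsilon,M}_t$ with $\Pi^{\epsilon,M}$ a martingale, and then concludes via Doob's $L^2$-maximal inequality. You instead recycle Proposition \ref{order}\,$i)$ for $V_M$ (legitimate, since $V_M$ is smooth coercive regular and $\psi^M\equiv\psi$ under $M>|x_0|$), reducing the escape event directly to a tail of $B_{t^*_\epsilon(b)}=\sup_{s\leq t^*_\epsilon(b)}|W_s|$. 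Your route is shorter and reuses existing machinery; the paper's route is more self-contained, exploits the coercivity rather than $\|V_M''\|_\infty$, and yields an explicit polynomial rate $2\epsilon^2(t^*_\epsilon(b))^2/(c_M^2-\epsilon t^*_\epsilon(b))^2$. Either estimate suffices on the logarithmic time scale $t^*_\epsilon(b)$.
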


\begin{proof}
Let us prove item $i)$.
Notice that $V''_M(0)=V''(0)$. By triangle's inequality and Lemma \ref{lemma1}, we have
\begin{eqnarray*}
{\norm{\mu^{\epsilon}-\mu^{\epsilon,M}}}&\leq & {\norm{\mu^{\epsilon}-\N^{\epsilon}}}
+{\norm{\N^{\epsilon}-\mu^{\epsilon,M}}}.\\
\end{eqnarray*}
Taking $\epsilon\rightarrow 0$ and using Lemma \ref{invariantes} we obtain
\begin{eqnarray*}
\lim\limits_{\epsilon\rightarrow 0}{\norm{\mu^{\epsilon}-\mu^{\epsilon,M}}}
&=&
0\end{eqnarray*}
for every $M>0$.
Now let us prove item $ii)$. Let $\epsilon>0$ and $M>|x_0|>0$ be fixed.
Let us define
$\tau^{\epsilon,M}:=\inf\left\{s\geq 0: \left|x^{\epsilon,M}_s\right|> M\right\}$. By the variational definition of total variation distance in terms of couplings
\begin{eqnarray*}
{{\norm{x^{\epsilon}_{{t}^*_{\epsilon}(b)}-x^{\epsilon,M}_{{t}^*_{\epsilon}(b)}}}}&\leq & {
{\mathbb{P}_{x_0}\left(\tau^{\epsilon,M}\leq {t}^*_{\epsilon}(b) \right)}}.
\end{eqnarray*}
Let us define $\sigma^{\epsilon,M}:=\inf\left\{s\geq 0: |x^{\epsilon,M}_s-\psi^{M}_s|>M-|x_0| \right\}$, where
$\psi^{M}:=\left\{\psi^{M}_t\right\}$
is the semiflow associated to the autonomous differential equation,
\begin{eqnarray*}
d\psi^{M}_t=-V_M'\left(\psi^{M}_t\right)
\end{eqnarray*} for every $t\geq 0$ and $\psi^{M}_0:=x_0$.
Using the coercivity hypothesis of $V_M$, we see that the semiflow $\psi^M$ is decreasing in norm, and $|\psi^{M}_t|\leq |x_0|$ for every $t\geq 0$.
In particular,
$\sigma^{\epsilon,M}\leq \tau^{\epsilon,M}$. Consequently.
$\mathbb{P}_{x_0}\left(\tau^{\epsilon,M}\leq {t}^*_{\epsilon}(b) \right)\leq \mathbb{P}_{x_0}\left(\sigma^{\epsilon,M}\leq {t}^*_{\epsilon}(b) \right)$.

Therefore, it is enough to prove that
$\lim\limits_{\epsilon\rightarrow 0}\mathbb{P}_{x_0}\left(\sigma^{\epsilon,M}> {t}^*_{\epsilon}(b) \right)=1$.
For every $s\geq 0$, let us define $z^{\epsilon,M}_s:=\frac{x^{\epsilon,M}_s-\psi^M_s}{\sqrt{\epsilon}}$.
Then, $\sigma^{\epsilon,M}=\inf\left\{s\geq 0: |z^{\epsilon,M}_s|>\frac{M-|x_0|}{\sqrt{\epsilon}} \right\}$.
We note that
\begin{eqnarray*}
\mathbb{P}_{x_0}\left(\sigma^{\epsilon,M}\geq {t}^*_{\epsilon}(b) \right)&=&\mathbb{P}_{x_0}\left(\sup\limits_{0\leq s \leq  {t}^*_{\epsilon}(b)}{\left|z^{\epsilon,M}_s\right|}\leq \frac{M-|x_0|}{\sqrt{\epsilon}}   \right).
\end{eqnarray*}
Let us define $c_M:=M-|x_0|>0$. We have
\begin{eqnarray*}
\mathbb{P}_{x_0}\left(\sup\limits_{0\leq s \leq  {t}^*_{\epsilon}(b)}{\left|z^{\epsilon,M}_s\right|}> \frac{c_M}{\sqrt{\epsilon}}   \right)&=&
\mathbb{P}_{x_0}\left(\sup\limits_{0\leq s \leq  {t}^*_{\epsilon}(b)}{\left(z^{\epsilon,M}_s\right)^2}> \frac{c^2_M}{\epsilon}   \right).
\end{eqnarray*}
Using It\^o's formula and the coercivity of $V_M$, we have
\begin{eqnarray*}
\left(z^{\epsilon,M}_t\right)^2\leq t+\Pi^{\epsilon,M}_t
\end{eqnarray*}
for every $t\geq 0$, where the process $\Pi^{\epsilon,M}_t:=2\int\limits_{0}^{t}{z^{\epsilon,M}_sdW_s}$ is a martingale. Then,
\begin{eqnarray*}
\mathbb{E}\left[\left(z^{\epsilon,M}_t\right)^2\right] \leq t
\end{eqnarray*}
for every $t\geq 0$. Using It\^o's isometry, we obtain
\begin{eqnarray*}
\mathbb{E}\left[\left(\Pi^{\epsilon,M}_t\right)^2\right] \leq 2t^2
\end{eqnarray*}
for every $t\geq 0$. Let us take $\epsilon_{M,b}>0$ such that for every $0<\epsilon<\epsilon_{M,b}$, we have
$c^2_M-\epsilon {t}^*_{\epsilon}(b)>0$.
Using Doob's inequality, we have
\begin{eqnarray*}
\mathbb{P}_{x_0}\left(\sup\limits_{0\leq s \leq  {t}^*_{\epsilon}(b)}{\left(z^{\epsilon,M}_s\right)^2}> \frac{c^2_M}{\epsilon}   \right)&\leq &
\mathbb{P}_{x_0}\left(\sup\limits_{0\leq s \leq  {t}^*_{\epsilon}(b)}{\left|\Pi^{\epsilon,M}_s\right|}>
\frac{c^2_M-\epsilon {t}^*_{\epsilon}(b)}{\epsilon}   \right)\\
& \leq &
\frac{\epsilon^2}{\left(c^2_M-\epsilon {t}^*_{\epsilon}(b)\right)^2}\mathbb{E}\left[\left(\Pi^{\epsilon,M}_{{t}^*_{\epsilon}(b)}\right)^2\right]\\
& \leq &
\frac{2\epsilon^2 \left({t}^*_{\epsilon}(b)\right)^2}{\left(c^2_M-\epsilon {t}^*_{\epsilon}(b)\right)^2}.
\end{eqnarray*}
Letting $\epsilon\rightarrow 0$ we obtain the desired limit.
\end{proof}

Now, we are ready to prove Theorem \ref{main}. To stress the fact that Theorem \ref{main} is just a consequence of what we have proved up to here, let us state this as a Lemma:

\begin{lemma}[From Smooth Coercive Case to the General Case]\label{acoplamiento}
Let $V_M$ be the approximation of $V$ obtained in Proposition \ref{baprox}.
Profile cut-off  for $\{x^{\epsilon,M}_t\}_{t\geq 0}$ implies
profile cut-off  for $\{x^{\epsilon}_t\}_{t\geq 0}$ with the same cut-off time, cut-off window and profile function.
\end{lemma}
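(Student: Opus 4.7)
The plan is to deduce the profile cut-off for $\{x^\epsilon\}_{\epsilon>0}$ from that of $\{x^{\epsilon,M}\}_{\epsilon>0}$ by a direct triangle inequality in total variation, using the two approximation bounds already established in Proposition \ref{rb}. Fix $b\in\mathbb{R}$ and choose any $M>|x_0|$ once and for all; then Theorem \ref{mainly} applied to the smooth coercive regular potential $V_M$ gives
\begin{equation*}
\lim_{\epsilon\to 0}\norm{x^{\epsilon,M}_{t^*_\epsilon(b)}-\mu^{\epsilon,M}}=G(b),
\end{equation*}
with the \emph{same} cut-off time $t_\epsilon$, window $w_\epsilon$ and profile $G$ as those proclaimed for $x^\epsilon$, because $V_M(x)=V(x)$ in a neighbourhood of $0$ and therefore $V_M''(0)=V''(0)$ and the semiflow $\psi_t$ used to define $\tilde c$ coincides near $0$ for both potentials.

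Next I would write, for each $\epsilon>0$ small enough that $t^*_\epsilon(b)>0$, the triangle inequality
\begin{equation*}
\norm{x^\epsilon_{t^*_\epsilon(b)}-\mu^\epsilon}
\le
\norm{x^\epsilon_{t^*_\epsilon(b)}-x^{\epsilon,M}_{t^*_\epsilon(b)}}
+\norm{x^{\epsilon,M}_{t^*_\epsilon(b)}-\mu^{\epsilon,M}}
+\norm{\mu^{\epsilon,M}-\mu^\epsilon},
\end{equation*}
and the symmetric one obtained by swapping $x^\epsilon$ and $x^{\epsilon,M}$ (and $\mu^\epsilon$ and $\mu^{\epsilon,M}$) on the left-hand side. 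By item $ii)$ of Proposition \ref{rb} the first summand tends to $0$ (since $M>|x_0|$), by item $i)$ of Proposition \ref{rb} the last summand tends to $0$, and by Theorem \ref{mainly} the middle term converges to $G(b)$. Passing to $\limsup$ in the first inequality and to $\liminf$ in the second yields
\begin{equation*}
\limsup_{\epsilon\to 0}\norm{x^\epsilon_{t^*_\epsilon(b)}-\mu^\epsilon}\le G(b)\le \liminf_{\epsilon\to 0}\norm{x^\epsilon_{t^*_\epsilon(b)}-\mu^\epsilon},
\end{equation*}
so the limit exists and equals $G(b)$ for every $b\in\mathbb{R}$. The limits $G(b)\to 1$ as $b\to-\infty$ and $G(b)\to 0$ as $b\to+\infty$ are inherited from Theorem \ref{mainly}, so $\{x^\epsilon\}_{\epsilon>0}$ has profile cut-off with the claimed $t_\epsilon$, $w_\epsilon$ and $G$.

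There is essentially no obstacle here beyond being careful about two bookkeeping points. First, one must verify that the very same $\tilde c$ appears for $V$ and for $V_M$; this is immediate because $V_M\equiv V$ on $\{|x|\le \sqrt{2}M\}$, so the deterministic trajectory $\psi_t$ starting at $x_0$ with $|x_0|<M$ stays in that set (by coercivity it is decreasing in norm) and coincides with the $V_M$-trajectory for all $t\ge 0$. Second, one must take $M$ fixed independently of $\epsilon$ before letting $\epsilon\to 0$, which is allowed because the constants in Proposition \ref{rb} $ii)$ only require $M>|x_0|$. With these two remarks in place the proof is a one-line triangle-inequality argument.
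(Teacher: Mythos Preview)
Your proposal is correct and follows essentially the same route as the paper: a triangle inequality in total variation combining item $i)$ and item $ii)$ of Proposition~\ref{rb} with Theorem~\ref{mainly}, together with the observation that the deterministic flow $\psi_t$ (and hence $\tilde c$, $t_\epsilon$, $w_\epsilon$, $G$) is identical for $V$ and $V_M$ once $M>|x_0|$. Your handling of the $\limsup$/$\liminf$ step is in fact slightly cleaner than the paper's exposition.
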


\begin{proof}
Recall the notation introduced in Proposition  \ref{lincou}. Let $\epsilon>0$ and $t>0$ be fixed.
Let us take $M>\max\left\{|x_0|,\|\psi\|_{\infty}\right\}$. We define
\begin{eqnarray*}
D^{\epsilon,M}{(t)}:=\norm{x^{\epsilon,M}_{t}-\mu^{\epsilon,M}}
\end{eqnarray*} and
\begin{eqnarray*}
D^{\epsilon}{(t)}:=\norm{x^{\epsilon}_{t}-\mu^{\epsilon}}.
\end{eqnarray*}
By triangle's inequality, we have
\begin{eqnarray*}
D^{\epsilon,M}{(t)}&\leq &  \norm{x^{\epsilon,M}_t-x^{\epsilon}_t}+D^{\epsilon}(t)+\norm{\mu^{\epsilon}-\mu^{\epsilon,M}}.
\end{eqnarray*}
Recall that
$t^{}_{\epsilon}=\frac{1}{2V^{\prime\prime}(0)}\left(\ln\left(\frac{1}{\epsilon}\right)+\ln\left(2V^{\prime\prime}(0)\right) \right)$
and
$w^{\prime}_{\epsilon}=\frac{1}{V^{\prime\prime}(0)}+\delta_{\epsilon}$ respectively.
Let $b\in \R$ be fixed. Recall that
${t}^*_{\epsilon}(b)=t_{\epsilon}+bw^\prime_{\epsilon}$. Take $\epsilon_b>0$ such that for every $0<\epsilon<\epsilon_b$ we have,
${t}^*_{\epsilon}(b)>0$.
Consequently,
\begin{eqnarray*}
D^{\epsilon,M}{({t}^*_{\epsilon}(b))}
&\leq & \norm{x^{\epsilon,M}_{{t}^*_{\epsilon}(b)}-x^{\epsilon}_{{t}^*_{\epsilon}(b)}}+D^{\epsilon}({t}^*_{\epsilon}(b))+\norm{\mu^{\epsilon}-\mu^{\epsilon,M}}.
\end{eqnarray*}
Therefore, using Proposition \ref{rb} and Lemma \ref{supinf} we have
\begin{eqnarray*}
\limsup\limits_{\epsilon\rightarrow 0}{D^{\epsilon,M}{({t}^*_{\epsilon}(b))}}&\leq &  \limsup\limits_{\epsilon\rightarrow 0}{D^{\epsilon}({t}^*_{\epsilon}(b))}.
\end{eqnarray*}
By Theorem \ref{mainly}, we know that
$\lim\limits_{\epsilon\rightarrow 0}{D^{\epsilon,M}{({t}^*_{\epsilon}(b))}}=G(b)$. Therefore
\begin{eqnarray*}
G(b)&\leq &  \limsup\limits_{\epsilon\rightarrow 0}{D^{\epsilon}({t}^*_{\epsilon}(b))}.
\end{eqnarray*}
It also follows that
\begin{eqnarray*}
D^{\epsilon}{({t}^*_{\epsilon}(b))}&\leq &  \norm{x^{\epsilon}_{{t}^*_{\epsilon}(b)}-x^{\epsilon,M}_{{t}^*_{\epsilon}(b)}}+D^{\epsilon,M}({t}^*_{\epsilon}(b))+\norm{\mu^{\epsilon,M}-\mu^{\epsilon}}.
\end{eqnarray*}
Therefore, using Lemma \ref{supinf}, Proposition \ref{rb} and Theorem \ref{mainly} we have
\begin{eqnarray*}
\liminf\limits_{\epsilon\rightarrow 0}{D^{\epsilon}{({t}^*_{\epsilon}(b))}}&\leq & G(b).
\end{eqnarray*}
We conclude that
\begin{eqnarray*}
\lim\limits_{\epsilon\rightarrow 0}{D^{\epsilon}{({t}^*_{\epsilon}(b))}}&=& G(b).
\end{eqnarray*}
\end{proof}

\section{Double Well Potential}\label{dwp}


We study the situation when the potential $V$ has only two wells of different depths.  In this situation we can observe two statistical different regimes.
Firstly, if the horizon is shorter that the exit time from the shallow well, the system cannot leave the well where it has started, and therefore stays in the neighborhood of the well's local minimum.
Secondly, if the time horizon is longer that the exit time from the shallow well, the system has enough time to reach the deepest well from any starting point, and stays in a neighborhood of the global minimum. C. Kipnis and C. Newman in \cite{KN} proved the following metastability behavior: there is a time scale on which the dynamical system converges to a Markov two-state process with one absorbing state corresponding to the deep well. This time scale is given by the mean exit time from the shallow well.

Using the last fact we can observe the following: 
Let us denote by $x{^-}$ the shallow well and by $x^+$ the deepest well.
In Theorem \ref{main} we prove that on the one-well potential case under the coercivity assumption we have profile cut-off phenomenon. 
Given a deterministic initial condition  $x_0$ in a small neighborhood of the well of $x^*$ where $x^* \in \{x^-,x^+\}$. Recall that  $\{x^{\epsilon}_t\}_{t\geq 0}$ is the the following differential equation
\begin{eqnarray*}
d{x^{\epsilon}_t}&=&-V^{\prime}({x^{\epsilon}_t})dt+\sqrt{\epsilon}dW_t,\\
\end{eqnarray*}
for $t \geq 0$. 
Let us suppose that $V^{\prime\prime}(x^*)>0$.
We have that the exit time from the well associated to the local minimum $x^*$ is exponentially large and the time of the cut-off time obtained in the Theorem \ref{main} is much smallest. 
Consequently, we have abrupt convergence to a ``kind local asymptotic distribution"; that is we have a kind of local cut-off phenomenon with respect to the following distance:
\begin{eqnarray*}
d_\epsilon(t):=\norm{x^\epsilon_t-\mathcal{N}\left(0,\frac{\epsilon}{2V^{\prime\prime}(x^*)}\right)},
\end{eqnarray*}
with time cut-off  $t_\epsilon=\frac{1}{2V^{\prime\prime}(x^*)}\ln\left(\frac{1}{\epsilon}\right)$, window cut-off $w_\epsilon=\frac{1}{V^{\prime\prime}(x^*)}+\epsilon^\gamma$ for some 
$\gamma\in ]0,1[$ and profile function $G:\mathbb{R}\rightarrow [0,1]$ given by 
\begin{eqnarray*}
G(b)&:=&\norm{\N{\left(c(x^*) e^{-b},1\right)}-\N{\left(0,1\right)}},
\end{eqnarray*}
where $c(x^*)$ is the nonzero constant given by
\begin{eqnarray*}
\lim\limits_{t\rightarrow +\infty}{e^{V^{\prime\prime}{(x^*)}t}\Phi_t}&=&c(x^*).
\end{eqnarray*}
\begin{remark}
By the same facts, the last local cut-off phenomenon can also extend for a multi-well potential. 
\end{remark}

\appendix\label{appendix}
\section{Properties of the Total Variation Distance of Normal Distribution}\label{ap1}

\begin{lemma}\label{lemma1}
Let $\{\mu,\tilde{\mu}\} \subset \mathbb{R}$ and $\{\sigma^2,\tilde{\sigma}^2\}\subset ]0,+\infty[$ be fixed numbers.
\begin{itemize}
\item[$i)$] For any constant $c\not= 0$ we have
\begin{eqnarray*}
\norm{\N{\left(c\mu,{c}^2\sigma^2 \right)}-\N{\left(c\tilde{\mu},{c}^2\tilde{\sigma}^2\right)}}&=&
\norm{\N{\left(\mu,\sigma^2\right)}-\N{\left(\tilde{\mu},\tilde{\sigma}^2\right)}}.
\end{eqnarray*}
\item[$ii)$]
\begin{eqnarray*}
\norm{\N{\left(\mu,\sigma^2 \right)}-\N{\left(\tilde{\mu},\tilde{\sigma}^2 \right)}}&=&
\norm{\N{\left(|\mu-\tilde{\mu}|,\sigma^2\right)}-\N{\left(0,\tilde{\sigma}^2 \right)}}.
\end{eqnarray*}
\end{itemize}
\end{lemma}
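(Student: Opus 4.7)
\noindent\textbf{Proof proposal for Lemma \ref{lemma1}.}

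The entire proof will rest on one general fact about the total variation distance: for any measurable bijection $T:\R\to\R$ with measurable inverse, and any two Borel probability measures $P,Q$ on $\R$, one has $\norm{T_{*}P-T_{*}Q}=\norm{P-Q}$. This is immediate from the definition
\[
\norm{P-Q}=\sup_{A\in\mathcal B(\R)}\bigl|P(A)-Q(A)\bigr|,
\]
together with the observation that the map $A\mapsto T^{-1}(A)$ is a bijection on $\mathcal B(\R)$, so the supremum is unchanged when $P$ and $Q$ are replaced by their pushforwards. I would record this principle at the outset and then apply it twice with explicit choices of $T$.

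For item $i)$, the plan is to take $T(x)=cx$, which (for $c\neq 0$) is a Borel bijection with Borel inverse $T^{-1}(x)=x/c$. If $X\sim\N(\mu,\sigma^2)$, then $cX\sim\N(c\mu,c^2\sigma^2)$, and analogously for $\N(\tilde\mu,\tilde\sigma^2)$. Applying the invariance principle with this $T$ gives exactly the equality in $i)$.

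For item $ii)$, I would proceed in two small steps. First, apply the invariance principle with the translation $T(x)=x-\tilde\mu$; since $T$ shifts both Gaussians by $-\tilde\mu$, we get
\[
\norm{\N(\mu,\sigma^2)-\N(\tilde\mu,\tilde\sigma^2)}=\norm{\N(\mu-\tilde\mu,\sigma^2)-\N(0,\tilde\sigma^2)}.
\]
If $\mu-\tilde\mu\geq 0$ we are done. Otherwise, apply the invariance principle once more with the reflection $T(x)=-x$ and use the fact that $\N(0,\tilde\sigma^2)$ is symmetric, so $T_{*}\N(0,\tilde\sigma^2)=\N(0,\tilde\sigma^2)$, while $T_{*}\N(\mu-\tilde\mu,\sigma^2)=\N(-(\mu-\tilde\mu),\sigma^2)=\N(|\mu-\tilde\mu|,\sigma^2)$. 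Combining the two steps yields $ii)$.

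There is no real obstacle here: the only thing to be careful about is to verify that all the transformations considered are genuinely bi-measurable bijections (so that the supremum in the definition of total variation is preserved), and that the pushforward formulas for Gaussians under affine maps are applied correctly. Once the invariance principle is stated, both items reduce to a single line of computation each.
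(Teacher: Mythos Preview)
Your proof is correct and follows essentially the same approach as the paper: the paper invokes the $L^{1}$-density characterization of total variation together with the Change of Variable Theorem, which is exactly the analytic incarnation of your pushforward-invariance principle under the affine bijections $x\mapsto cx$, $x\mapsto x-\tilde\mu$, and $x\mapsto -x$. The only cosmetic difference is that you work directly with the supremum definition rather than passing through densities, but the content is the same.
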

\begin{proof}
This is done using the characterization of the total variation distance between two probability measures which are absolutely continuous with respect to the Lebesgue measure on $\left(\mathbb{R},\mathcal{B}\left(\R\right)\right)$ and using the Change of Variable Theorem.
\end{proof}

\begin{lemma}\label{lemma2}
Let $\mu\in \R$ then
\begin{eqnarray*}
\norm{\N{\left(\mu,1 \right)}-\N{\left(0,1\right)}}=\frac{2}{\sqrt{2\pi}}\int\limits_{0}^{\nicefrac{|\mu|}{2}}{e^{-\frac{x^2}{2}}dx} \leq \frac{|\mu|}{\sqrt{2\pi}}.
\end{eqnarray*}
\end{lemma}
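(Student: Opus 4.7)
My plan is to reduce to the case $\mu > 0$ and then directly compute the extremal signed-integral formulation of the total variation distance between two Gaussians that differ only in their means.

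First, by Lemma \ref{lemma1}(ii) I may replace $\mu$ by $|\mu|$, so it suffices to handle $\mu \geq 0$. The case $\mu = 0$ is trivial (both sides vanish), so assume $\mu > 0$. Since $\N(\mu,1)$ and $\N(0,1)$ are absolutely continuous with respect to Lebesgue measure with densities $f_\mu(x)=\tfrac{1}{\sqrt{2\pi}}e^{-(x-\mu)^2/2}$ and $f_0(x)=\tfrac{1}{\sqrt{2\pi}}e^{-x^2/2}$, I would use the standard identity
\[
\norm{\N(\mu,1)-\N(0,1)} = \int_{\{f_\mu > f_0\}}\bigl(f_\mu(x)-f_0(x)\bigr)\,dx,
\]
which expresses the total variation as the probability excess on the set where the shifted density dominates.

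The next step is to describe that set: the inequality $e^{-(x-\mu)^2/2} > e^{-x^2/2}$ simplifies to $(x-\mu)^2 < x^2$, that is $x > \mu/2$. Hence, with $Z \sim \N(0,1)$,
\[
\norm{\N(\mu,1)-\N(0,1)} = \mathbb{P}(Z > -\mu/2) - \mathbb{P}(Z > \mu/2) = \mathbb{P}\bigl(-\tfrac{\mu}{2} < Z < \tfrac{\mu}{2}\bigr),
\]
which by symmetry of the standard normal density about zero equals $\tfrac{2}{\sqrt{2\pi}}\int_0^{\mu/2} e^{-x^2/2}\,dx$, as claimed.

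For the upper bound I would simply use $e^{-x^2/2} \leq 1$ on the interval of integration, giving
\[
\frac{2}{\sqrt{2\pi}} \int_0^{|\mu|/2} e^{-x^2/2}\,dx \leq \frac{2}{\sqrt{2\pi}} \cdot \frac{|\mu|}{2} = \frac{|\mu|}{\sqrt{2\pi}}.
\]
I do not expect any real obstacle here: the only point requiring care is correctly identifying the half-line $\{f_\mu > f_0\} = (\mu/2,+\infty)$, after which the calculation reduces to an elementary, symmetric Gaussian integral.
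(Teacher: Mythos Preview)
Your proof is correct and follows essentially the same approach as the paper's: the paper's proof merely says to use the density characterization of total variation and carry out the straightforward computation, which is exactly what you have done in detail. Your identification of $\{f_\mu>f_0\}=(\mu/2,+\infty)$ and the resulting Gaussian integral are correct, and the bound via $e^{-x^2/2}\leq 1$ is precisely the intended estimate.
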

\begin{proof}
Also, this is done using the characterization of the total variation distance between two probability measures which are absolutely continuous with respect to the Lebesgue measure on $\left(\mathbb{R},\mathcal{B}\left(\R\right)\right)$ and an straightforward calculations.
\end{proof}

\begin{lemma}\label{adicional}
Let $\{\mu_{\epsilon}\}_{\epsilon>0}\subset \R$ be a sequence such that $\lim\limits_{\epsilon\rightarrow 0}{\mu_{\epsilon}}=\mu\in \R$. Then
\begin{eqnarray*}
\lim\limits_{\epsilon\rightarrow 0}\norm{\N{\left(\mu_{\epsilon},1 \right)}-\N{\left(0,1\right)}}=\norm{\N{\left(\mu,1\right)}-\N{\left(0,1\right)}}.
\end{eqnarray*}
\end{lemma}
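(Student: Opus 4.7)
The plan is to reduce the statement to the elementary continuity of a single-variable real function, using the explicit formula provided by Lemma \ref{lemma2}.

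First, I would invoke Lemma \ref{lemma2} to rewrite, for every $\nu\in\mathbb{R}$,
\begin{eqnarray*}
F(\nu):=\norm{\N{(\nu,1)}-\N{(0,1)}}=\frac{2}{\sqrt{2\pi}}\int\limits_{0}^{|\nu|/2}{e^{-x^{2}/2}dx}.
\end{eqnarray*}
Thus it suffices to prove that $F:\mathbb{R}\to\mathbb{R}$ is continuous, and then evaluate at $\nu=\mu_{\epsilon}$ and pass to the limit, using that $F(\mu_{\epsilon})\to F(\mu)$.

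Next, I would establish continuity of $F$ as a composition: the map $\nu\mapsto |\nu|/2$ is continuous, and the map $a\mapsto \int_{0}^{a}{e^{-x^{2}/2}dx}$ is continuous (indeed of class $\mathcal{C}^{1}$) on $\mathbb{R}$ by the Fundamental Theorem of Calculus, since the integrand $e^{-x^{2}/2}$ is continuous. Composing, $F$ is continuous on $\mathbb{R}$. Since $\mu_{\epsilon}\to\mu$ by hypothesis, we get
\begin{eqnarray*}
\lim\limits_{\epsilon\rightarrow 0}F(\mu_{\epsilon})=F(\mu),
\end{eqnarray*}
which is exactly the claim.

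There is no real obstacle here: the only thing being used beyond Lemma \ref{lemma2} is the elementary fact that $a\mapsto\int_{0}^{a}{e^{-x^{2}/2}dx}$ is continuous in $a$. As an alternative route (should one prefer not to appeal to Lemma \ref{lemma2}), one could write the total variation distance via Lebesgue densities as $\frac{1}{2}\int_{\mathbb{R}}{|\phi(x-\mu_{\epsilon})-\phi(x)|dx}$ (with $\phi$ the standard normal density) and apply the Dominated Convergence Theorem, using the integrable dominating function $\phi(x-\mu_{\epsilon})+\phi(x)\leq \sup_{|\nu|\leq R}\phi(x-\nu)+\phi(x)$ (valid once $|\mu_{\epsilon}|\leq R$ for $\epsilon$ small), which is integrable by an easy tail estimate. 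Either path completes the proof in a couple of lines.
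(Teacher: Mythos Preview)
Your proof is correct. Both your argument and the paper's rely on Lemma~\ref{lemma2}, but they use different parts of it. The paper proceeds via the triangle inequality for total variation,
\[
\Big|\,\norm{\N(\mu_{\epsilon},1)-\N(0,1)}-\norm{\N(\mu,1)-\N(0,1)}\,\Big|\leq \norm{\N(\mu_{\epsilon},1)-\N(\mu,1)},
\]
then applies Lemma~\ref{lemma1}\,$(ii)$ to rewrite the right-hand side as $\norm{\N(|\mu_{\epsilon}-\mu|,1)-\N(0,1)}$ and finally invokes the \emph{bound} in Lemma~\ref{lemma2} to get $\leq |\mu_{\epsilon}-\mu|/\sqrt{2\pi}\to 0$ (with Lemma~\ref{supinf} used to pass to the limit). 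In effect the paper establishes that $F$ is Lipschitz with constant $1/\sqrt{2\pi}$. You instead use the \emph{explicit integral formula} in Lemma~\ref{lemma2} and read off continuity of $F$ from the Fundamental Theorem of Calculus. Your route is a touch more direct; the paper's route yields the slightly stronger Lipschitz estimate and avoids appealing to the exact formula. Either way the argument is a couple of lines, and your alternative via densities and Dominated Convergence is also a valid independent proof.
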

\begin{proof}
This is done using triangle inequality, the item $ii)$ of Lemma \ref{lemma1}, Lemma \ref{lemma2} and the
Lemma \ref{supinf}.
\end{proof}

\begin{lemma}\label{adicionalvar}
Let $\{\sigma^2_{\epsilon}\}_{\epsilon>0}\subset ]0,+\infty[$ be a sequence such that $\lim\limits_{\epsilon\rightarrow 0}{\sigma^2_{\epsilon}}=\sigma^2\in ]0,+\infty[$. Then
\begin{eqnarray*}
\lim\limits_{\epsilon\rightarrow 0}\norm{\N{\left(0,\sigma^2_{\epsilon} \right)}-\N{\left(0,\sigma^2\right)}}&=&0.
\end{eqnarray*}
\end{lemma}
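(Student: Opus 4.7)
The plan is to reduce the statement to a canonical form by scaling and then to invoke the standard principle that pointwise convergence of probability densities implies $L^1$-convergence (Scheffé's lemma).

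Concretely, I would first apply part $i)$ of Lemma \ref{lemma1} with $c = 1/\sigma$ to obtain
$$\norm{\N{\left(0,\sigma_\epsilon^2\right)} - \N{\left(0,\sigma^2\right)}} = \norm{\N{\left(0,r_\epsilon\right)} - \N{\left(0,1\right)}},$$
where $r_\epsilon := \sigma_\epsilon^2/\sigma^2$ satisfies $r_\epsilon \to 1$ as $\epsilon \to 0$. This eliminates one parameter and reduces the claim to proving that $\norm{\N(0,r_\epsilon) - \N(0,1)} \to 0$.

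Next, I would write the total variation distance as half of the $L^1$-distance between the two Gaussian densities:
$$\norm{\N{\left(0,r_\epsilon\right)} - \N{\left(0,1\right)}} = \frac{1}{2} \int_{\mathbb{R}} \left| \frac{1}{\sqrt{2\pi r_\epsilon}} e^{-x^2/(2r_\epsilon)} - \frac{1}{\sqrt{2\pi}} e^{-x^2/2} \right| dx.$$
For every fixed $x \in \mathbb{R}$ the map $r \mapsto (2\pi r)^{-1/2} e^{-x^2/(2r)}$ is continuous at $r = 1$, so the integrand converges pointwise to $0$. Since both expressions inside the absolute value are probability densities, Scheffé's lemma yields $L^1$-convergence at once. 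Equivalently, for $\epsilon$ small enough that $r_\epsilon \in [1/2, 2]$, the integrand is dominated by the integrable envelope $\frac{1}{\sqrt{\pi}}e^{-x^2/4} + \frac{1}{\sqrt{2\pi}}e^{-x^2/2}$, and the conclusion follows from dominated convergence.

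There is essentially no obstacle in this argument: the whole content of the lemma is the continuity of the Gaussian density in its variance parameter, combined with the Scheffé-type principle that convergence of probability densities implies convergence in total variation. The only minor point to verify is the existence of an integrable majorant in a neighborhood of $r = 1$, which is immediate because the hypothesis $\sigma^2 > 0$ keeps $r_\epsilon$ bounded away from both $0$ and $\infty$, so there is no degeneration of the density and no mass escaping at infinity.
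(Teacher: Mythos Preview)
Your proposal is correct and follows essentially the same route as the paper: reduce via Lemma~\ref{lemma1}(i) to comparing $\N(0,r_\epsilon)$ with $\N(0,1)$, use the $L^1$-characterization of total variation between absolutely continuous laws, and conclude by an elementary limiting argument. The paper's ``straightforward calculations'' are exactly what you carry out via Scheff\'e's lemma (or, equivalently, dominated convergence with the explicit envelope you provide).
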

\begin{proof}
This is done using the item $i)$ of Lemma \ref{lemma1}, the characterization of the total variation distance between two probability measures which are absolutely continuous with respect to the Lebesgue measure on $\left(\mathbb{R},\mathcal{B}\left(\R\right)\right)$
and an straightforward calculations.
\end{proof}

\begin{lemma}[Total Variation Bounded]\label{conditional}
Let $(\Omega,\mathcal{F},\mathbb{P})$ be a probability space and $\mathcal{G}\subset \mathcal{F}$ be a sub-sigma algebra of $\mathcal{F}$.
Let $X,Y,Z:(\Omega,\mathcal{F})\rightarrow (\mathbb{R},\mathcal{B}(\mathbb{R}))$ be random variables such that $X$ and $Y$ are $\mathcal{G}$ measurables and $X,Y,Z\in L^{1}\left(\Omega,\mathcal{F},\mathbb{P}\right)$. Let us consider the following random variables $X^{*}=X+Z$ and $Y^{*}=Y+Z$. Let us suppose that for some $\sigma^2>0$ we have
$\mathbb{P}\left[X^{*}\in F\left|\right.\mathcal{G}\right]=\mathbb{P}\left[\mathcal{G}(X,\sigma^2)\in F\right]$ and
$\mathbb{P}\left[Y^{*}\in F\left|\right.\mathcal{G}\right]=\mathbb{P}\left[\mathcal{G}(Y,\sigma^2)\in F\right]$
 for every $F\in \mathcal{F}$.
Then
\begin{eqnarray*}
\norm{X^{*}-Y^{*}}&\leq & \frac{1}{\sqrt{2\pi}\sigma}\mathbb{E}{\left[\left|X-Y\right|\right]}.
\end{eqnarray*}
\end{lemma}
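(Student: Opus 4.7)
The plan is to reduce the bound to the one-dimensional Gaussian estimate of Lemma \ref{lemma2} by conditioning on $\mathcal{G}$ and exploiting the hypothesis that, conditionally on $\mathcal{G}$, both $X^{*}$ and $Y^{*}$ are Gaussian with common variance $\sigma^{2}$ and means $X$ and $Y$ respectively (which are $\mathcal{G}$-measurable). The key observation is that the total variation distance admits the density-representation
\begin{equation*}
\norm{X^{*}-Y^{*}}=\tfrac{1}{2}\int_{\mathbb{R}}|p_{X^{*}}(z)-p_{Y^{*}}(z)|\,dz,
\end{equation*}
where by the tower property the densities are given by $p_{X^{*}}(z)=\mathbb{E}\left[\phi_{\sigma}(z-X)\right]$ and $p_{Y^{*}}(z)=\mathbb{E}\left[\phi_{\sigma}(z-Y)\right]$, with $\phi_{\sigma}$ the density of $\mathcal{N}(0,\sigma^{2})$.

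The first step is to apply the triangle inequality inside the expectation to obtain $|p_{X^{*}}(z)-p_{Y^{*}}(z)|\le \mathbb{E}\left[|\phi_{\sigma}(z-X)-\phi_{\sigma}(z-Y)|\right]$, and then invoke Tonelli's theorem to swap the $z$-integral with the expectation. This gives
\begin{equation*}
\norm{X^{*}-Y^{*}}\le \mathbb{E}\left[\tfrac{1}{2}\int_{\mathbb{R}}|\phi_{\sigma}(z-X)-\phi_{\sigma}(z-Y)|\,dz\right]=\mathbb{E}\left[\,\norm{\mathcal{N}(X,\sigma^{2})-\mathcal{N}(Y,\sigma^{2})}\,\right],
\end{equation*}
where the equality uses the $L^{1}$-representation of the total variation distance pointwise in $\omega$.

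The second step is to evaluate the inner total variation distance. By Lemma \ref{lemma1}, item $i)$, applied with the scaling constant $c=1/\sigma$, together with Lemma \ref{lemma1}, item $ii)$, one has $\norm{\mathcal{N}(X,\sigma^{2})-\mathcal{N}(Y,\sigma^{2})}=\norm{\mathcal{N}(|X-Y|/\sigma,1)-\mathcal{N}(0,1)}$. By Lemma \ref{lemma2} the latter is bounded above by $|X-Y|/(\sqrt{2\pi}\,\sigma)$. Taking expectations and using the previous display yields
\begin{equation*}
\norm{X^{*}-Y^{*}}\le \frac{1}{\sqrt{2\pi}\,\sigma}\,\mathbb{E}\left[|X-Y|\right],
\end{equation*}
which is the desired estimate.

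The only subtle point is the interchange of the supremum (implicit in the definition of total variation) with the expectation; this is handled cleanly by working from the start with the density representation rather than with sets $A\in\mathcal{F}$, so that Tonelli's theorem provides the required swap without appealing to measurable selection. All other steps are routine applications of the earlier Gaussian lemmas and present no genuine obstacle.
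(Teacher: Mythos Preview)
Your proof is correct and follows essentially the same approach as the paper's: both arguments condition on $\mathcal{G}$, invoke the Gaussian hypothesis, and then apply Lemmas~\ref{lemma1} and~\ref{lemma2} to bound the conditional total variation by $|X-Y|/(\sqrt{2\pi}\,\sigma)$ before taking expectations. The only cosmetic difference is that the paper works directly with the supremum-over-sets definition of total variation and the tower property, whereas you pass through the $L^{1}$ density representation and Tonelli's theorem; neither route offers a real advantage over the other.
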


\begin{proof}
Using the the properties of conditional expectation, the item $i)$, item $ii)$ of Lemma \ref{lemma1} and Lemma \ref{lemma2}, we have
\begin{eqnarray*}
\norm{X^{*}-Y^{*}}&=&\sup\limits_{F\in \mathcal{F}}
{\left|\mathbb{E}\left[\mathbbm{1}_{\left(X^* \in F \right)}-\mathbbm{1}_{\left(Y^* \in F\right)} \right]\right|}\\
&\leq &
\sup\limits_{F\in \mathcal{F}}
{\mathbb{E}\left[ \left| \mathbb{E}\left[\mathbbm{1}_{\left(X^* \in F\right)}-\mathbbm{1}_{\left(Y^* \in F\right)}\left|\right.\mathcal{G} \right] \right|  \right]}\\
&\leq &
\sup\limits_{F\in \mathcal{F}}
{\mathbb{E}\left[ \left| \mathbb{P}\left(\N(X,\sigma^2)\in F\right)-\mathbb{P}\left(\N(Y,\sigma^2)\in F\right) \right|  \right]}\\
&\leq &
\sup\limits_{F\in \mathcal{F}}
{\mathbb{E}\left[ \frac{1}{\sqrt{2\pi}\sigma}\left| X-Y \right|  \right]}\\
&=&
\frac{1}{\sqrt{2\pi}\sigma}\mathbb{E}{\left[\left|X-Y\right|\right]}.
\end{eqnarray*}
\end{proof}

\section{Qualitative and Quantitative Behavior}\label{ap2}
\begin{lemma}\label{importante0}
Let us assume the hypothesis of Theorem \ref{toy}.
 Then
\begin{itemize}
\item[$i)$]  $\lim\limits_{t\rightarrow +\infty}{\psi_t}=0$.
\item[$ii)$] $\lim\limits_{t\rightarrow +\infty}{\Phi_t}=0$.
\item[$iii)$] There exist constants $c\not=0$ and $\tilde{c}\not=0$ such that
\begin{eqnarray*}
\lim\limits_{t\rightarrow +\infty}{e^{V^{\prime\prime}{(0)}t}\Phi_t}&=&c,
\end{eqnarray*}
\begin{eqnarray*}
\lim\limits_{t\rightarrow +\infty}{e^{V^{\prime\prime}{(0)}t}\psi_t}&=&\tilde{c},
\end{eqnarray*}
where $\Phi=\{\Phi_{t}\}_{t\geq 0}$ is the fundamental solution of the non-autonomous system
\begin{eqnarray*}
d\Phi_{t}=-V^{\prime\prime}{(\psi_t)}\Phi_t dt
\end{eqnarray*}
for every $t\geq 0$ with initial condition $\Phi_{0}=1$.
\item[$iv)$] $\lim\limits_{t\rightarrow +\infty}{\Phi_t^2 \int\limits_{0}^{t}{\left(\frac{1}{\Phi_s}\right)^2d{{s}}}}=\frac{1}{2V^{\prime\prime}{(0)}}$.
\end{itemize}
\end{lemma}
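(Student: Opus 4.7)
The plan is to reduce everything to the fact that, near the unique critical point $0$, the ODE $\dot\psi_t = -V'(\psi_t)$ behaves like its linearization $\dot\psi \approx -V''(0)\psi$. Once exponential decay of $\psi_t$ is established, items $(ii)$, $(iii)$, and $(iv)$ follow from the explicit representation $\Phi_t = \exp\!\bigl(-\int_0^t V''(\psi_s)\,ds\bigr)$.

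First I would prove $(i)$. Because $V'(x)=0$ only at $x=0$ and $V''(0)>0$, the sign of $V'$ agrees with the sign of $x$ on $\R\setminus\{0\}$, so $\psi_t$ keeps the sign of $x_0$ and the map $t\mapsto |\psi_t|$ is strictly decreasing. Coercivity of $V$ ensures $\{\psi_t\}_{t\geq 0}$ stays in a compact set, so $|\psi_t|$ converges to some $\ell\geq 0$; the $\omega$-limit must be a zero of $V'$, forcing $\ell=0$. Next, fix $\alpha\in (0,V''(0))$ and $\beta>V''(0)$; by smoothness of $V'$ at $0$ and $\psi_t\to 0$, there is $T>0$ such that for $s\geq T$ one has $\alpha|\psi_s|\leq |V'(\psi_s)|\leq \beta |\psi_s|$. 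Using $\frac{d}{ds}\psi_s^2=-2\psi_sV'(\psi_s)$ and the fact that $\psi_s$ and $V'(\psi_s)$ have the same sign, this yields the two-sided exponential bound $c_1 e^{-\beta s}\leq |\psi_s|\leq c_2 e^{-\alpha s}$ for $s\geq T$, which in particular makes $s\mapsto |\psi_s|$ integrable on $[0,\infty)$.

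For $(ii)$ I would use $\Phi_t=\exp\!\bigl(-\int_0^t V''(\psi_s)\,ds\bigr)$; since $V''(\psi_s)\to V''(0)>0$, the integral grows linearly and $\Phi_t\to 0$. For $(iii)$, writing
\[
e^{V''(0)t}\Phi_t=\exp\!\Bigl(\int_0^t \bigl[V''(0)-V''(\psi_s)\bigr]\,ds\Bigr),
\]
the integrand is dominated by $\|V'''\|_{L^\infty(K)}|\psi_s|$ on the compact set $K$ containing the trajectory, hence is absolutely integrable by the exponential bound above. The limit therefore exists as a positive number $c$. For $\tilde c$, since $\psi_t$ never changes sign I work with $\ln|\psi_t|$ and the identity
\[
\ln\!\bigl(e^{V''(0)t}|\psi_t|\bigr)=\ln|\psi_0|+\int_0^t\Bigl[V''(0)-\tfrac{V'(\psi_s)}{\psi_s}\Bigr]ds;
\]
Taylor expanding $V'(\psi)=V''(0)\psi+O(\psi^2)$ shows the integrand is again $O(|\psi_s|)$, hence integrable, so the limit exists and is nonzero.

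For $(iv)$ both $\int_0^t \Phi_s^{-2}\,ds$ and $\Phi_t^{-2}$ diverge, so I would apply L'Hôpital's rule to $\Phi_t^2\int_0^t \Phi_s^{-2}ds=\bigl(\int_0^t \Phi_s^{-2}ds\bigr)/\Phi_t^{-2}$; differentiating and using $\dot\Phi_t=-V''(\psi_t)\Phi_t$ collapses the ratio to $1/(2V''(\psi_t))$, which tends to $1/(2V''(0))$ by $(i)$. The main obstacle, and the step I would write most carefully, is the exponential upper bound on $|\psi_s|$ in $(i)$: everything downstream hinges on the integrability it provides, and one must be careful that the linearization inequality $|V'(\psi)|\geq \alpha|\psi|$ is valid on the relevant neighborhood of $0$ uniformly for large $s$.
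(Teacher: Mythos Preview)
Your argument is correct, but the paper takes a structurally different route that is worth knowing. Instead of proving exponential decay of $|\psi_t|$ and then showing absolute convergence of the correction integrals, the paper exploits two closed-form identities. First, it observes that $\Phi_t = V'(\psi_t)/V'(\psi_0)$, since both sides solve the same linear ODE with the same initial value; this gives $(ii)$ immediately from $(i)$. Second, for $(iii)$ it constructs an explicit conjugacy: setting $H(z)=V''(0)/V'(z)-1/z$ (extended continuously at $0$) and $h(x)=x\exp\!\bigl(\int_0^x H\bigr)$, one checks that $\Psi_t:=h(\psi_t)$ satisfies the \emph{exact} linear equation $\dot\Psi_t=-V''(0)\Psi_t$, so $e^{V''(0)t}\psi_t = h(\psi_0)\exp\!\bigl(-\int_0^{\psi_t}H\bigr)\to h(\psi_0)=:\tilde c$; the limit for $\Phi_t$ then follows from $\Phi_t=V'(\psi_t)/V'(\psi_0)$ and the Mean Value Theorem. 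For $(iv)$ the paper sandwiches using the limit from $(iii)$ rather than applying L'H\^opital.

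The trade-offs: the paper's conjugacy trick yields explicit formulas for $c$ and $\tilde c$ and never needs a decay-rate estimate on $|\psi_t|$, but it is specific to this scalar gradient structure. Your approach is more pedestrian but also more robust---the exponential bound plus absolute integrability pattern, and the L'H\^opital step in $(iv)$, would transfer essentially unchanged to higher-dimensional or less algebraically special settings. Both are valid here; your identified ``main obstacle'' (the uniform lower linearization bound $|V'(\psi)|\geq \alpha|\psi|$ near $0$) is exactly what the paper sidesteps via $h$.
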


\begin{proof} ~
\begin{itemize}
\item[$i)$] By our assumptioms $V^{\prime}(0)=0$, $V^{\prime\prime}(0)>0$ and $V^{\prime}(x)\not=0$ if $x\not=0$. Therefore, the unique critical point zero is
asymptotically stable, so there exists an open neighboorhood $N_0$ of zero such that for every $\psi_0\in N_0$. It follows that
$\psi_t$ goes to zero as $t$ goes to infinity.
Let us consider that $\psi_0\not \in N_{0}$ and $K:=V^{-1}\left([0,V(\psi_0)]\right)$. Then $\psi_t\in K$ for every $t\geq 0$.
Also, $K$ is a compact set because of $\lim\limits_{|x|\rightarrow +\infty}{V(x)}=+\infty$.
Because $K$ is bounded, then
there exist $r>0$ such that $K\subset B(0,r)$ where we denote $B(0,r):=\{x\in \mathbb{R}: |x|<r\}$
and $\overline{B(0,r)}:=\{x\in \mathbb{R}: |x|\leq r\}$ so we
we can choose $N_0$ small enough such that $N_0\subset B(0,r)\subset \overline{B(0,r)}$. Let us
call $\hat{K}:=\overline{B(0,r)}$ then $\psi_t \in \hat{K}$ for every $t\geq 0$.
Let us define $\delta:=\inf\limits_{x\in \hat{K}\setminus N_0}{\left(V^{\prime}(x)\right)^2}>0$.
Let us suppose that
$\psi_t\not\in N_{0}$ for every $t\geq 0$, then $dV(\psi_t)=-\left(V^{\prime}(\psi_t)\right)^2\leq -\delta$ for every $t\geq 0$. Therefore, $0\leq t\leq \frac{V(\psi_0)}{\delta}$ which is a contradiction. Consequently, there exists $\tau>0$ such that $\psi_{\tau}\in N_0$ and consequently, $\psi_t$ goes to zero as $t$ goes to infinity.

\item[$ii)$] By our assumptions it follows that $\Phi_t=\frac{V^{\prime}(\psi_t)}{V^{\prime}(\psi_0)}$ for every $t\geq 0$, where $\psi_0=x_0\not=0$. So by item $i)$ and continuity of $V^{\prime}$ we have
$\lim\limits_{t\rightarrow \infty}{\Phi_t}=\frac{V^{\prime}(0)}{V^{\prime}(\psi_0)}=0$.

\item[$iii)$] Let us define $H(z)=\left(\frac{V^{\prime\prime}(0)}{V^{\prime}(z)}-\frac{1}{z}\right)\mathbbm{1}_{\{z\not=0\}}
+\left(\frac{-V^{\prime\prime\prime}(0)}{2V^{\prime\prime}(0)}\right)\mathbbm{1}_{\{z=0\}}$,
where $\mathbbm{1}_{A}$ denotes the indicator function of the set $A\subset \R$.
Let us define $h:\R\rightarrow \R$ by
\begin{eqnarray*}
h(x):=x\exp{\left(\int\limits_{0}^{x}{H(z)dz}\right)}.
\end{eqnarray*}
Since $H$ is everywhere continuous, then it follows that $h$ is well defined.
Let us define $\Psi_t:=h(\psi_t)$ for every $t\geq 0$, then $d\Psi_t=-V^{\prime\prime}{(0)}\Psi_t dt$ for every $t\geq 0$
and $\Psi_0=h(\psi_0)$. Therefore,
\begin{eqnarray}\label{ayuda}
\psi_t \exp{\left(V^{\prime\prime}(0)t\right)}&=&h(\psi_0)\exp{\left(-\int\limits_{0}^{\psi_t}{H(z)dz}\right)}
\end{eqnarray}
for every $t\geq 0$. By Intermediate Value Theorem, for every $t\geq 0$ there exists $\xi_t\in ]\min\{0,\psi_t\},\max\{0,\psi_t\}[$
such that $V^{\prime}(\psi_t)=V^{\prime\prime}(\xi_t)\psi_t$. Because of relation (\ref{ayuda}), we see that
\begin{eqnarray}\label{ayuda2}
V^{\prime}(\psi_t) \exp{\left(V^{\prime\prime}(0)t\right)}&=&V^{\prime\prime}(\xi_t)h(\psi_0)\exp{\left(-\int\limits_{0}^{\psi_t}{H(z)dz}\right)}
\end{eqnarray}
for every $t\geq 0$. Consequently, by the relation (\ref{ayuda2}) and item $ii)$, we have
\begin{eqnarray*}
\lim\limits_{t\rightarrow +\infty}{V^{\prime}(\psi_t) \exp{\left(V^{\prime\prime}(0)t\right)}}&=&
V^{\prime\prime}(0)h(\psi_0).
\end{eqnarray*}
Because  $sgn(h(x))=sgn(x)$ for every $x\not=0$, then
$V^{\prime\prime}(0)h(\psi_0)\not=0$.

\item[$iv)$] By item $ii)$, we have
\begin{eqnarray*}
{\Phi_t^2 \int\limits_{0}^{t}{\left(\frac{1}{\Phi_s}\right)^2d{{s}}}}&=&
{(V^{\prime}(\psi_t))^2 \int\limits_{0}^{t}{\left(\frac{1}{V^{\prime}(\psi_s)}\right)^2d{{s}}}}
\end{eqnarray*}
for each $t\geq 0$. By item $iii)$ and for each $0<\epsilon<c^2$, we have
\begin{eqnarray*}
\limsup\limits_{t\rightarrow +\infty}{(V^{\prime}(\psi_t))^2 \int\limits_{0}^{t}{\left(\frac{1}{V^{\prime}(\psi_s)}\right)^2d{{s}}}}
&\leq & \left(\frac{c^2+\epsilon}{c^2-\epsilon}\right) \frac{1}{2V^{\prime\prime}(0)},\\
\liminf\limits_{t\rightarrow +\infty}{(V^{\prime}(\psi_t))^2 \int\limits_{0}^{t}{\left(\frac{1}{V^{\prime}(\psi_s)}\right)^2d{{s}}}}
&\geq  & \left(\frac{c^2-\epsilon}{c^2+\epsilon}\right) \frac{1}{2V^{\prime\prime}(0)}.\\
\end{eqnarray*}
Letting $\epsilon\rightarrow 0$, we obtain
\begin{eqnarray*}
\lim\limits_{t\rightarrow +\infty}{(V^{\prime}(\psi_t))^2 \int\limits_{0}^{t}{\left(\frac{1}{V^{\prime}(\psi_s)}\right)^2d{{s}}}}
&= & \frac{1}{2V^{\prime\prime}(0)}.\\
\end{eqnarray*}
\end{itemize}
\end{proof}

\begin{lemma}\label{porque}
Let us assume the hypothesis of Theorem \ref{toy}. Let us follow the same notation as in the proof of Theorem \ref{mainly}.
It follows that
\begin{eqnarray*}\lim\limits_{\epsilon\rightarrow 0}{\frac{\sup\limits_{{t}_{\epsilon}(b)\leq t\leq {t}^*_{\epsilon}(b)}
{|\psi_{t}}|}{\sqrt{\epsilon}}}=\rho(b)\in ]0,+\infty[.
\end{eqnarray*}
for every $b\in \R$.
\end{lemma}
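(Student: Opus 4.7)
The plan is to exploit the monotonicity of $|\psi_t|$ to reduce the supremum to a single point, and then plug in the asymptotic from Lemma \ref{asin}$iii)$. First I would observe that under the hypothesis of Theorem \ref{toy} (i.e.\ $V$ a regular potential with $V'(0)=0$, $V''(0)>0$, and $V'$ never vanishing off $0$), $V'(x)$ has the same sign as $x$; hence $\psi_t$ preserves the sign of the initial condition $x_0\neq 0$, and
\[
\frac{d}{dt}|\psi_t| \;=\; -\,\mathrm{sgn}(\psi_t)\,V'(\psi_t) \;=\; -|V'(\psi_t)| \;<\;0,
\]
so that $t\mapsto|\psi_t|$ is strictly decreasing on $[0,+\infty)$.

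Second, I would set $\tau_\epsilon(b):=\min\{t_\epsilon(b),t^*_\epsilon(b)\}$ (interpreting the interval in the statement as $[\min,\max]$ when $b<0$), so that the monotonicity forces
\[
\sup_{t_\epsilon(b)\leq t\leq t^*_\epsilon(b)}|\psi_t| \;=\; |\psi_{\tau_\epsilon(b)}|.
\]
I would then compute the cut-off scale explicitly: since $V''(0)t_\epsilon = \tfrac12\ln(2V''(0)/\epsilon)$ and $V''(0)w_\epsilon=1$, one has
\[
e^{V''(0)\,t_\epsilon(b)} \;=\; \sqrt{\tfrac{2V''(0)}{\epsilon}}\,e^{b},
\]
while the perturbation $b\delta_\epsilon$ contributes only the factor $e^{-V''(0)b\delta_\epsilon}\to 1$, since $\delta_\epsilon=\epsilon^\gamma\to 0$. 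Consequently $e^{-V''(0)\tau_\epsilon(b)} = \sqrt{\epsilon/(2V''(0))}\,e^{-b}\,(1+o(1))$.

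Finally I would combine these with Lemma \ref{asin}$iii)$, which yields $e^{V''(0)t}\psi_t\to\tilde{c}\neq 0$ as $t\to+\infty$. Since $\tau_\epsilon(b)\to+\infty$, writing
\[
\frac{|\psi_{\tau_\epsilon(b)}|}{\sqrt\epsilon} \;=\; \bigl|e^{V''(0)\tau_\epsilon(b)}\psi_{\tau_\epsilon(b)}\bigr|\cdot\frac{e^{-V''(0)\tau_\epsilon(b)}}{\sqrt\epsilon}
\]
gives the limit $\rho(b) = |\tilde{c}|\,e^{-b}/\sqrt{2V''(0)} \in (0,+\infty)$, as claimed. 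I do not expect any serious obstacle here: the entire computation is bookkeeping around the cut-off scale combined with Lemma \ref{asin}$iii)$, and the only role of $\delta_\epsilon$ is to be negligible compared with $w_\epsilon$, which is precisely what guarantees that both endpoints of the supremum share the same leading asymptotic.
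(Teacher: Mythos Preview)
Your proof is correct and follows essentially the same line as the paper's: both reduce to the factorization $|\psi_t|/\sqrt{\epsilon}=\bigl|e^{V''(0)t}\psi_t\bigr|\cdot e^{-V''(0)t}/\sqrt{\epsilon}$ and then invoke Lemma~\ref{asin}$iii)$. The paper simply takes any maximizer $\tilde t$ in the interval (which suffices since the interval has length $|b|\delta_\epsilon\to 0$), whereas you sharpen this by using the monotonicity of $|\psi_t|$ to identify the endpoint and extract the explicit value $\rho(b)=|\tilde c|\,e^{-b}/\sqrt{2V''(0)}$.
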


\begin{proof}
By continuity we have
\begin{eqnarray*}
\frac{\sup\limits_{{t}_{\epsilon}(b)\leq t\leq {t}^*_{\epsilon}(b)}
{|\psi_{t}}|}{\sqrt{\epsilon}}&=& \frac{
{|\psi_{\tilde{t}}}|}{\sqrt{\epsilon}}
\end{eqnarray*}
for some $\tilde{t}\in [{t}_{\epsilon}(b),{t}^*_{\epsilon}(b)]$.
Then, using the following relation and Lemma \ref{asin}, it becomes straightforward.
\begin{eqnarray*}
\frac{{|\psi_{\tilde{t}}}|}{\sqrt{\epsilon}}&=&e^{V^{\prime\prime}(0)\tilde{t}} |\psi_{\tilde{t}}|\frac{e^{-V^{\prime\prime}(0)\tilde{t}}}{\sqrt{\epsilon}}.
\end{eqnarray*}
\end{proof}

\section{Tools}\label{ap3}
\begin{lemma}\label{hopital}
$\lim\limits_{\epsilon\rightarrow 0}{\epsilon^{\alpha}\left(\ln\left(\frac{1}{\epsilon}\right)\right)^{\beta}}=0$ for every $\alpha>0$ and $\beta>0$.
\end{lemma}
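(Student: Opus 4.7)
The plan is to reduce this to the well-known fact that exponentials dominate polynomials at infinity. I would first perform the substitution $u := \ln(1/\epsilon)$, so that $\epsilon = e^{-u}$ and $u \to +\infty$ as $\epsilon \to 0^{+}$. Under this change of variable, the quantity of interest becomes
\begin{eqnarray*}
\epsilon^{\alpha}\left(\ln\left(\tfrac{1}{\epsilon}\right)\right)^{\beta} \;=\; \frac{u^{\beta}}{e^{\alpha u}},
\end{eqnarray*}
so the claim is equivalent to $\lim_{u\to+\infty} u^{\beta}/e^{\alpha u} = 0$ for all $\alpha,\beta>0$.

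Next, I would establish this dominance by a direct series comparison, avoiding any appeal to repeated L'H\^opital's rule (which would require $\lceil \beta \rceil$ applications and some bookkeeping when $\beta\notin \mathbb{N}$). Pick an integer $n$ with $n > \beta$. From the Taylor expansion of the exponential one has, for every $u > 0$,
\begin{eqnarray*}
e^{\alpha u} \;\geq\; \frac{(\alpha u)^{n}}{n!},
\end{eqnarray*}
so that
\begin{eqnarray*}
0 \;\leq\; \frac{u^{\beta}}{e^{\alpha u}} \;\leq\; \frac{n!}{\alpha^{n}}\, u^{\beta - n}.
\end{eqnarray*}
Since $\beta - n < 0$, the right-hand side tends to $0$ as $u \to +\infty$, and the squeeze theorem finishes the argument. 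Translating back through $u = \ln(1/\epsilon)$ yields the statement.

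There is no real obstacle here: the result is a standard undergraduate calculus fact, and the only small subtlety is handling non-integer $\beta$, which is cleanly dispatched by choosing $n > \beta$ in the factorial bound above rather than iterating L'H\^opital's rule. Alternatively, one could phrase the whole thing in one line by writing $\epsilon^{\alpha}(\ln(1/\epsilon))^{\beta} = \bigl(\epsilon^{\alpha/\beta}\ln(1/\epsilon)\bigr)^{\beta}$ (for $\beta>0$) and invoking the elementary limit $\delta \ln(1/\delta) \to 0$ as $\delta\to 0^{+}$ applied to $\delta = \epsilon^{\alpha/\beta}$; either presentation is equally short.
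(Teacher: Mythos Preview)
Your proof is correct. The paper states this lemma without proof (it is listed under ``Tools'' as a standard calculus fact, with the label \texttt{hopital} hinting at the intended justification), so there is no argument to compare against; your substitution $u=\ln(1/\epsilon)$ together with the factorial bound $e^{\alpha u}\geq (\alpha u)^n/n!$ is a clean and complete way to supply the missing details.
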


\begin{lemma}\label{supinf}
Let $\{a_{\epsilon}\}_{\epsilon>0}\subset \R$ and $\{b_{\epsilon}\}_{\epsilon>0}\subset \R$ be sequences such that
$\lim\limits_{\epsilon\rightarrow 0}{b_{\epsilon}}=b\in \R$. Then
\begin{itemize}
\item[$i)$] $\limsup\limits_{\epsilon\rightarrow 0}{\left(a_{\epsilon}+b_{\epsilon}\right)}=\limsup\limits_{\epsilon\rightarrow 0}{a_{\epsilon}}+b$.
\item[$ii)$] $\liminf\limits_{\epsilon\rightarrow 0}{\left(a_{\epsilon}+b_{\epsilon}\right)}=\liminf\limits_{\epsilon\rightarrow 0}{a_{\epsilon}}+b$.
\end{itemize}
\end{lemma}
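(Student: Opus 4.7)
The plan is to reduce Lemma \ref{supinf} to the general subadditivity of $\limsup$ and superadditivity of $\liminf$ (valid for arbitrary real sequences), together with the fact that a convergent sequence satisfies $\limsup b_\epsilon = \liminf b_\epsilon = b$. So the proof is really a bookkeeping exercise; no conceptual obstacle is expected.

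For item $i)$, I would first record the general inequality $\limsup_{\epsilon\to 0}(u_\epsilon+v_\epsilon)\le \limsup_{\epsilon\to 0}u_\epsilon+\limsup_{\epsilon\to 0}v_\epsilon$ (proved directly from the definition of $\limsup$ as an infimum of suprema on shrinking neighbourhoods of $0$). Applying this with $u_\epsilon=a_\epsilon$, $v_\epsilon=b_\epsilon$ and using $\limsup_{\epsilon\to 0} b_\epsilon=b$ gives the $\le$ half. For the reverse inequality, I would write $a_\epsilon=(a_\epsilon+b_\epsilon)+(-b_\epsilon)$ and apply the same general inequality with $u_\epsilon=a_\epsilon+b_\epsilon$ and $v_\epsilon=-b_\epsilon$; since $-b_\epsilon\to -b$, this yields $\limsup_{\epsilon\to 0} a_\epsilon\le \limsup_{\epsilon\to 0}(a_\epsilon+b_\epsilon)-b$, i.e. the $\ge$ half after rearranging.

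Item $ii)$ follows by the identity $\liminf_{\epsilon\to 0}x_\epsilon=-\limsup_{\epsilon\to 0}(-x_\epsilon)$: replacing $a_\epsilon,b_\epsilon$ by $-a_\epsilon,-b_\epsilon$ (noting $-b_\epsilon\to -b$) and applying item $i)$ gives the corresponding identity for $\liminf$.

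An alternative, equally short route is the direct $\varepsilon$-$\delta$ argument: fix $\eta>0$; since $b_\epsilon\to b$, there exists $\epsilon_\eta>0$ such that $b-\eta<b_\epsilon<b+\eta$ for all $0<\epsilon<\epsilon_\eta$. Hence $a_\epsilon+b-\eta\le a_\epsilon+b_\epsilon\le a_\epsilon+b+\eta$ on that range, and taking $\limsup$ (resp.\ $\liminf$) as $\epsilon\to 0$ and then sending $\eta\to 0$ yields both equalities. The only point worth being slightly careful about is the case where $\limsup_{\epsilon\to 0}a_\epsilon$ or $\liminf_{\epsilon\to 0}a_\epsilon$ is $\pm\infty$, which is handled without issue because $b\in\mathbb{R}$ is finite and the conventions $\pm\infty+b=\pm\infty$ make the above inequalities still meaningful.
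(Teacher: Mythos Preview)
Your argument is correct and entirely standard: both the subadditivity-of-$\limsup$ route and the direct $\eta$-sandwich route work, and your remark about the $\pm\infty$ case is the only place where any care is needed. There is nothing to compare against here, since the paper states Lemma~\ref{supinf} in the Tools appendix without proof, treating it as a well-known elementary fact; your write-up would serve perfectly well as the omitted justification.
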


\begin{theorem}[Pinsker Inequality]\label{pinsker}
Let $\mu$ and $\nu$ be two probability measures define in the measurable space $\left(\Omega,\mathcal{F}\right)$. Then,
\begin{eqnarray*}
\norm{\mu-\nu}^2\leq 2\mathcal{H}\left(\mu\left|\right.\nu\right),
\end{eqnarray*}
where $\mathcal{H}\left(\mu\left|\right.\nu\right)$ is the Kullback information of $\mu$ respect to $\nu$ and it is defined as follows: if $\mu\ll\nu$ then take the Radon-Nikodym derivative $f=\frac{d\mu}{d\nu}$ and define
$\mathcal{H}\left(\mu\left|\right.\nu\right):=\int\limits_{\Omega}{f\ln(f)d\nu}$, in the case $\mu\not\ll\nu$ let us define $\mathcal{H}\left(\mu\left|\right.\nu\right):=+\infty$.
\end{theorem}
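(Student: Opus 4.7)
The plan is to follow the classical two-step reduction: first reduce the general inequality to a two-point (Bernoulli) comparison via data processing, and then dispatch the Bernoulli case by a direct one-variable convexity computation. I would begin by discharging the trivial case $\mu \not\ll \nu$, in which $\mathcal{H}(\mu|\nu) = +\infty$ and the inequality is vacuous, and thereafter assume the Radon–Nikodym derivative $f := d\mu/d\nu$ exists. Taking the maximizing set $A^* := \{f \geq 1\}$ and exploiting $\int(f-1)\,d\nu = 0$, one obtains the familiar identity
$$\norm{\mu-\nu} = \mu(A^*) - \nu(A^*) = \tfrac{1}{2}\int_{\Omega} |f-1|\,d\nu.$$

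Next, the data-processing inequality for relative entropy applied to the measurable map $\mathbbm{1}_{A^*}:\Omega\to\{0,1\}$ produces Bernoulli measures with parameters $p := \mu(A^*)$ and $q := \nu(A^*)$, with $\norm{\mu-\nu} = |p-q|$ (the push-forward is total-variation isometric at the maximizer) and $\mathcal{H}(\mu|\nu) \geq p \log(p/q) + (1-p)\log((1-p)/(1-q)) =: H(p|q)$. It therefore suffices to establish the Bernoulli version $(p-q)^2 \leq 2 H(p|q)$ for $p, q \in (0,1)$.

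For this last step, fix $q \in (0,1)$ and set $g(p) := 2 H(p|q) - (p-q)^2$. A direct differentiation yields $g(q) = 0$, $g'(q) = 0$, and
$$g''(p) = \frac{2}{p(1-p)} - 2 \geq 6 > 0,$$
the last inequality following from $p(1-p) \leq 1/4$ on $(0,1)$. Strict convexity together with the double vanishing at $p = q$ forces $g \geq 0$ on $(0,1)$, finishing the Bernoulli case and therefore the full inequality.

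The main obstacle is the data-processing step, whose standard proof invokes the log-sum inequality and is not completely elementary. A cleaner alternative that sidesteps it is to prove the pointwise bound $3(x-1)^2 \leq (4 + 2x)\bigl(x \log x - x + 1\bigr)$ for $x \geq 0$ (a single-variable calculus check) and then apply the Cauchy–Schwarz inequality to
$$\left(\int_{\Omega}|f-1|\,d\nu\right)^2 \leq \int_{\Omega} \frac{3(f-1)^2}{4 + 2f}\,d\nu \cdot \int_{\Omega}\frac{4 + 2f}{3}\,d\nu \leq 2\,\mathcal{H}(\mu|\nu),$$
using $\int f\,d\nu = 1$ in the second factor and $\int (f\log f - f + 1)\,d\nu = \mathcal{H}(\mu|\nu)$ in the first. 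Combined with the identity $\norm{\mu-\nu} = \tfrac{1}{2}\int|f-1|\,d\nu$ this yields the even sharper bound $\norm{\mu-\nu}^2 \leq \mathcal{H}(\mu|\nu)/2$, from which the stated inequality follows a fortiori and the entire argument reduces to a single calculus lemma suitable for an appendix.
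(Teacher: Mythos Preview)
Your argument is correct. Both routes you sketch are standard and sound: the data-processing reduction to the Bernoulli case is the classical proof, and the Cauchy--Schwarz argument with the pointwise bound $3(x-1)^2 \leq (4+2x)(x\log x - x + 1)$ is the well-known refinement that in fact delivers the sharp constant $\norm{\mu-\nu}^2 \leq \tfrac{1}{2}\mathcal{H}(\mu|\nu)$, stronger than the $2\mathcal{H}(\mu|\nu)$ stated here.

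As for comparison with the paper: there is nothing to compare. The paper does not prove this theorem at all; it simply records the statement and refers the reader to Bolley--Villani \cite{BV} with the line ``For details check \cite{BV}.'' Pinsker's inequality is quoted here purely as a tool (it is used once, inside the proof of Proposition~\ref{shorttime}, to pass from a Girsanov density to a total-variation bound), and the authors made no attempt to include an argument. Your write-up therefore goes well beyond what the paper does, and either of your two approaches would serve perfectly well as a self-contained appendix proof; the second has the advantage of being a one-line calculus lemma plus Cauchy--Schwarz, and of recovering the optimal constant.
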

For details check \cite{BV}.

\markboth{}{References}
\bibliographystyle{amsplain}

\end{document}